\pgfplotsset{compat=1.15}
\theoremstyle{definition}
\newtheorem{teo}{Theorem}[section]
\newtheorem{defi}[teo]{Definition}
\newtheorem{prop}[teo]{Proposition}
\newtheorem{cor}[teo]{Corollary}
\newtheorem{lema}[teo]{Lemma}
\newtheorem{obs}[teo]{Remark}
\newtheorem*{mthm}{Main Theorem}
\newcommand{\A}{\mathbb{A}}
\newcommand{\C}{\mathbb{C}}
\newcommand{\oc}{\mathcal{O}}
\newcommand{\calo}{\mathcal{O}}
\newcommand{\I}{\mathcal{I}}
\newcommand{\Z}{\mathbb{Z}}
\newcommand{\p}{\mathbb{P}}
\newcommand{\Q}{\mathbb{Q}}
\newcommand{\homc}{\mathcal{H}om}
\newcommand{\extc}{\mathcal{E}xt}
\newcommand{\coh}[2]{H^{#1}(#2)}
\newcommand{\nc}{\mathcal{N}}
\newcommand{\mc}{\mathcal{M}}
\newcommand{\N}{\mathbb{N}}
\newcommand{\hc}{\mathcal{H}}
\newcommand{\E}{\mathcal{E}}
\DeclareMathOperator{\im}{Im}
\DeclareMathOperator{\spec}{Spec}
\DeclareMathOperator{\Hom}{Hom}
\DeclareMathOperator{\supp}{Supp}
\DeclareMathOperator{\quot}{Quot}
\DeclareMathOperator{\rk}{rk}
\DeclareMathOperator{\Ext}{Ext}
\DeclareMathOperator{\ext}{ext}
\DeclareMathOperator{\codim}{codim}
\DeclareMathOperator{\Aut}{Aut}
\DeclareMathOperator{\Hilb}{Hilb}
\DeclareMathOperator{\Sym}{Sym}
\DeclareMathOperator{\Pic}{Pic}
\DeclareMathOperator{\GL}{GL}
\DeclareMathOperator{\PGL}{PGL}
\begin{document}

\definecolor{ffffww}{rgb}{1,1,0.4}
\definecolor{qqccqq}{rgb}{0,0.8,0}
\definecolor{ttffqq}{rgb}{0.2,1,0}
\definecolor{ffqqqq}{rgb}{1,0,0}

\title[Moduli spaces of quasi-trivial sheaves]{Moduli spaces of quasi-trivial sheaves}

\author{Douglas Guimarães}
\address{Université de Bourgogne - IMB \\ UFR Sciences et Techniques -- Bâtiment Mirande, 9 Avenue Alain Savary, 21078 Dijon Cedex, France}
\email{douglas-manoel.guimaraes@u-bourgogne.fr}
\author{Marcos Jardim}
\address{Universidade Estadual de Campinas - UNICAMP \\ Instituto de Matemática, Estatística e Computação Científica - IMECC \\ Departamento de Matem\'atica \\
	Rua S\'ergio  Buarque de Holanda, 651\\ 13083-970 Campinas-SP, Brazil}
\email{jardim@ime.unicamp.br}

%\date{May, 2020}

\maketitle

\begin{abstract}
A torsion-free sheaf $E$ on a projective variety $X$ is called \emph{quasi-trivial} if $E^{\vee\vee}=\oc_{X}^{\oplus r}$. While such sheaves are always $\mu$-semistable, they may not be semistable. We study the Gieseker--Maruyama moduli space $\nc_X(r,n)$ of rank $r$ se\-mi\-sta\-ble quasi-trivial sheaves on $X$ with $E^{\vee\vee}/E$ being a 0-dimensional sheaf of length $n$ via the Quot scheme of points $\quot(\oc_{X}^{\oplus r},n)$. We show that, when $(X,A)$ is a \emph{good} projective variety, then $\nc_X(r,n)$ is empty when $r>n$, while $\nc_X(n,n)$ has no stable points and is isomorphic to the symmetric product $\Sym^n(X)$. Our main result is the construction of an irreducible component of $\nc_X(r,n)$ of dimension $n(d+r-1)-r^2+1$ when $r<n$. Furthermore, if we restrict to $X=\p^3$ this is the only irreducible component when $n\le10$.
\end{abstract}

\tableofcontents

\section{Introduction}

Let $(X,A)$ be a polarized projective variety, let $P$ be a polynomial in $\Q[t]$, and denote by $\mc_X(P)$ the Gieseker--Maruyama moduli space of semistable sheaves on $X$ with Hilbert polynomial $P$. Maruyama proved in \cite{Maruyama} that the space $\mc_X(P)$ is a projective scheme. However, especially when $\dim X\ge3$, the geometry of such a scheme remains largely unknown, despite the efforts of many authors in the past four decades, and questions about connectedness, irreducibility, the number of irreducible components, and so on, remain open even when we restrict to work over projective spaces taking, for example, $X=\p^3$.

For instance, let $X=\p^3$ and denote $\mc_{\p^3}(P)=\mc_{\p^3}(r,c_1,c_2,c_3)$, where $c_1$, $c_2$, and $c_3$ are the first, second, and third Chern classes of $E$, respectively. When $r=1$ and $c_1=0$ (which can always be achieved after twisting by an appropriate line bundle), one gets that $\mc_{\p^3}(1,0,c_2,c_3)$ is isomorphic to the Hilbert scheme $\Hilb^{d,g}(\p^3)$ of 1-dimensional schemes of degree $d=-c_2$ and genus $g=c_3-2c_2$ \cite[Lemma B.5.6]{KPS}, which is known to always be connected \cite{Hartshorne4}. Not much is known in general when $r\ge 2$, though:
\begin{enumerate}
	\item $\mc_{\p^3}(2,c_1,c_2,c_3)$ is irreducible for $c_3=c_2^2-c_2+2$ when $c_1=0$, or $c_3=c_2^2$ when $c_1=-1$, see \cite[Theorem 1.1]{Schmidt} and the references therein;
	\item $\mc_{\p^3}(2,0,2,c_3)$ has 2 irreducible components when $c_3=2$ and it has 3 irreducible components when $c_3=0$ \cite[Section 6]{Jardim1}.
	\item $\mc_{\p^3}(2,-1,2,c_3)$ has 2 irreducible components when $c_3=2$ and it has 4 irreducible components when $c_3=0$ \cite[Main Theorem 3]{Charles}.
\end{enumerate}
Moreover, the moduli spaces in items (2) and (3) are connected. For higher values of $c_2$, one can check that the number of irreducible components of $\mc_{\p^3}(2,c_1,c_2,0)$ grows with $c_2$, see \cite[Proposition 3.6]{Ein}; it is not known whether $\mc_{\p^3}(2,c_1,c_2,c_3)$ is always connected.

The goal of this paper is to explore a somewhat exotic case, namely 
$$\nc_X(r,n):=\mc_X(r\cdot P_X(t)-n) \text{ with } r,n\geq 1,$$
whose points correspond to \emph{quasi-trivial} rank $r$ sheaves, that is, sheaves $E$ on $X$ such that $E^{\vee\vee}=\oc_{\p^3}^{\oplus r}$; this nomenclature is borrowed from Artamkin \cite{A}. The motivation comes from its close relationship, described in the body of the paper, between $\nc_X(r,n)$ and the Hilbert and Quot schemes of points in $X$. Moreover, even though the main focus of this paper is the moduli space of semistable quasi-trivial sheaves, we also provide some results regarding $\mu$-semistable quasi-trivial sheaves.

First, we study $\mu$-semistable sheaves $E$ on $(X,A)$ with $\rk(E)\geq 1$ and vanishing Chern classes, and show that they are always extensions of ideal sheaves of subschemes of $X$ of codimension at least 3, see Lemma \ref{JH in Pic} below. In addition, when $(X,A)$ is a \emph{good} projective variety (cf. Definition \ref{def:good}), we prove that the moduli space of such sheaves is a GIT quotient of a Quot scheme $\quot(\oc_{X}^{\oplus r},u)$, where $u$ is a polynomial of degree less than or equal to $d-3$, see Theorem \ref{thm moduli}.

Here is the main result of this paper. 

\begin{mthm} \hfill
\begin{enumerate}
\item $\nc_X(r,n)$ is empty whenever $r>n$ or $n<0$. %[Proposition \ref{nc empty}].
\item $\nc_X(n,n)$ is isomorphic to $\Sym^n(X)$. %[Proposition \ref{N is sym}].
\item If $r<n$, then $\nc_X(r,n)$ has an irreducible component of dimension $n(d+r-1)-r^2+1$. Moreover, if $X=\p^3$ and $n\leq10$, then $\nc_{\p^3}(r,n)$ is irreducible. %[Theorem \ref{irred comp r} and Corollary \ref{irred for 10}].
\end{enumerate}
\end{mthm}

The bound on $n$ comes from the fact that the variety $\mathcal{C}(n)$ of triples of $n\times n$ commuting matrices is known to be irreducible precisely for $n\le 10$; in fact, our conclusion is that $\nc(r,n)$ is irreducible whenever $\mathcal{C}(n)$ is.

%In order to prove the claim above we will regard the elements of $\nc(r,n)$ in two ways. First, as elements of the $\quot$ scheme $n$ of points over $\p^3$, denoted here by $\quot(\oc_{\p^3}^{\oplus r},n)$. Next we will see elements of $\nc(r,n)$ as extension of sheaf of ideals, and this lead us to study $\mu$-semistable sheaves with $c_1=c_2=0$, where we were able to prove the following claim. %[Theorem \ref{E as extension of id}].

%\begin{mthm}
%If $E$ is a $\mu$-semistable sheaf on $\p^d$ with $\rk(E)\geq 1$ and $c_1(E)=c_2(E)=0$, then $E$ is an extension of ideal sheaves of subschemes of $\p^d$ of codimension at least 3.
%\end{mthm}

The paper is organized as follows. We start by studying reflexive sheaves with vanishing Chern classes in Section \ref{sect reflexive}; we prove a key technical result about the triviality of these sheaves, which is subsequently used in the following sections. In Section \ref{sect mod spaces} we give a criterion that tells when a torsion-free sheaf coming arising as the kernel of an element of $\quot(\oc_{X}^{\oplus r},u)$ is (semi)stability, and explain the relation between sheaf (semi)stability and the GIT-stability with respect to the natural action of $\GL_r$ on $\quot(\oc_{X}^{\oplus r},u)$.

Section \ref{sect quot ext 3} is where we establish items (1) and (2) of the Main Theorem, and we prove many technical results that will be used in sections \ref{irred comp} and \ref{irred comp 2}. In Section \ref{irred comp} we construct an irreducible component of $\nc_X(2,n)$ that we will use this as an induction step to construct an irreducible component for $\nc_X(r,n)$. In the last section, we restrict to $X=\p^3$ and prove that $\nc_{\p^3}(r,n)$ is irreducible for $n<10$.

We work over the complex numbers $\C$. For a torsion-free sheaf $E$, (semi)stable always means Gieseker (semi)stability, while $\mu$-(semi)stability refers to stability in the sense of Mumford--Takemoto. As usual, we denote by the lower capital letters the dimension of the respective cohomology or Ext group: $h^i(F):=\dim H^i(F)$ and $\ext^i(F,G):=\dim \Ext^i(F,G)$.

\subsection*{Acknowledgments}
DG is supported by the CAPES-MathAmSud grant number \linebreak 88881.520221/2020-01; he was also supported by CAPES – Finance Code 001 and by the International Cooperation Program CAPES/COFECUB Foundation at the Université de Bourgogne where many helpful discussions with Daniele Faenzi were made.
MJ is partially supported by the CNPQ grant number 305601/2022-9 and the FAPESP Thematic Project 2018/21391-1.

% This work was also partially funded by CAPES - Finance Code 001. 

%%%%%%%%%%%%%%%%%%%%%%%%%%%%%%%%%%%%%%%%%%%%%%%%%%%%%%%%%%%%%%%%%%%%%%%%
%%%%%%%%%%%%%%%%%%%%%%%%%%%%%%%%%%%%%%%%%%%%%%%%%%%%%%%%%%%%%%%%%%%%%%%%

\section{Semistable reflexive sheaves with vanishing Chern classes} \label{sect reflexive}

Recall that a torsion-free sheaf $E$ on a smooth polarized projective variety $(X,A)$ of dimension $d$ is said to be \emph{$\mu$-(semi)stable} with respect to an ample divisor $A$ if $E$ is torsion-free and
$$ \dfrac{c_1(F)\cdot A^{d-1}}{\rk(F)} < ~(\le)~ \dfrac{c_1(E)\cdot A^{d-1}}{\rk(E)} $$
for every nontrivial subsheaf $F\subset E$. Furthermore, $E$ is \emph{Gieseker (semi)stable} if every nontrivial subsheaf $F\subset E$ satisfies
$$ \dfrac{P_F(t)}{\rk(F)} < ~(\le)~ \dfrac{P_E(t)}{\rk(E)} $$
where
$$ P_E(t):=\chi\big(E\otimes\calo_X(t\cdot A)\big) = \sum_{k=0}^d \alpha_k(E)t^k $$
denotes the Hilbert polynomial of the sheaf $E$ with respect to the divisor $A$. 

%Let $P_{E,2}(t)$ be the truncation of $P_E(t)$ at the term of degree $d-2$: 
%$$ P_{E,2}(t) := \sum_{k=d-2}^{d} \alpha_k(E)t^k; $$
%this truncated Hilbert polynomial is used to define the less familiar notion of \emph{2-Gieseker semistability}; more precisely, a torsion-free sheaf $E$ is 2-Gieseker (semi)stable if 
%$$ \dfrac{P_{F,2}(t)}{\rk(F)} < ~(\le)~ \dfrac{P_{E,2}(t)}{\rk(E)} $$
%for every nontrivial subsheaf $F\subset E$.

For the remainder of this paper, unless otherwise specified, (semi)stability will always refer to Gieseker (semi)stability. Remark that 
$$ \mu\text{-stability} ~~ \Longrightarrow ~~ %\text{2-Gieseker stability} ~~ \Longrightarrow 
~~ \text{Gieseker stability} ~~ \Longrightarrow ~~ \text{Gieseker semistability} ~~
\Longrightarrow ~~ \mu\text{-semistability}, $$
%$$ \Longrightarrow  ~~ %\Longrightarrow ~~ \text{2-Gieseker semistability} ~~ \Longrightarrow ~~ \mu\text{-semistability}, $$
see \cite[Lemma 1.2.13]{Huy-Lhen}. Moreover, $E$ is $\mu$-(semi)stable if and only if the dual sheaf $E^\vee$ is $\mu$-(semi)stable.

The main characters of the present paper are presented in the following definition, borrowed from Artamkin \cite[p. 452]{A}.

\begin{defi} \label{def:quasi-trivial}
A torsion-free sheaf $E$ on a projective variety $X$ is said to be \textit{quasi-trivial} if $E^{\vee\vee}\simeq\calo_X^{\oplus r}$.
\end{defi}

Quasi-trivial sheaves of rank 1 are just ideal sheaves of subschemes of codimension at least 2. Clearly, quasi-trivial sheaves are properly $\mu$-semistable (ie. $\mu$-semistable but not $\mu$-stable) when $\rk(E)>1$.

Notice that if $E$ is quasi-trivial and $\codim Q_E\ge3$ where $Q_E:=E^{\vee\vee}/E$, then $c_1(E)=c_2(E)=0$. We will now investigate to what extent the converse is true; since we will make frequent use of the following condition, it is worth fixing it in a definition.

\begin{defi} \label{def:v-chern}
A torsion-free sheaf $F$ on a polarized projective variety $(X,A)$ is said to have \textit{vanishing Chern classes} if $c_1(F)\cdot A^{d-1}=c_{2}(F)\cdot A^{d-2}=0$.
\end{defi}

By Grothendieck--Riemann--Roch, $F$ has vanishing Chern classes if and only if
$$ P_{F,2}(t) = r\cdot P_{X,2}(t), $$
where $r=\rk(F)$ and $P_X(t)=\chi\big(\calo_X(t\cdot A)\big)$ is the Hilbert polynomial of the structural sheaf.

A particular case of a result due to Simpson, see \cite[Theorem 2]{Simpson}, establishes that if $F$ is a $\mu$-semistable reflexive sheaf with vanishing Chern classes on $(X,A)$, then $F$ is an extension of $\mu$-stable locally free sheaves with vanishing Chern classes. This has the following interesting consequence.

\begin{lema} \label{JH in Pic}
Let $(X,A)$ be a smooth polarized projective variety such that every $\mu$-stable reflexive sheaf with vanishing Chern classes is a line bundle. If $E$ is a semistable reflexive sheaf with vanishing Chern classes, then its Jordan--H\"older filtration has factors in $\Pic^0(X)$.
\end{lema}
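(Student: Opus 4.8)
The plan is to feed $E$ into the structure theorem of Simpson quoted above. Since Gieseker semistability implies $\mu$-semistability, $E$ is in particular a $\mu$-semistable reflexive sheaf with vanishing Chern classes, so Simpson's theorem provides a filtration $0=E_0\subset E_1\subset\cdots\subset E_m=E$ whose successive quotients $L_i:=E_i/E_{i-1}$ are $\mu$-stable locally free sheaves with vanishing Chern classes. By the hypothesis imposed on $(X,A)$, each $L_i$ is a line bundle; as the second Chern class of a line bundle vanishes automatically, ``vanishing Chern classes'' for $L_i$ reduces to $c_1(L_i)\cdot A^{d-1}=0$, so in particular $\mu(L_i)=0=\mu(E)$.

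The heart of the argument is to upgrade ``$L_i$ has $A$-degree zero'' to ``$L_i\in\Pic^0(X)$'', i.e.\ to see that $c_1(L_i)$ is numerically trivial and not merely orthogonal to $A^{d-1}$. For this I would use that the $\mu$-stable factors produced by Simpson's theorem are not just of degree zero but underlie flat unitary bundles (each corresponds to an irreducible unitary representation of $\pi_1(X)$); a rank-one flat unitary bundle is exactly a numerically trivial line bundle, hence an element of $\Pic^0(X)$. Alternatively one can argue purely numerically: additivity of the Chern character along the filtration gives $\sum_i c_1(L_i)^2=c_1(E)^2-2c_2(E)$, and intersecting with $A^{d-2}$ and using the vanishing Chern classes of $E$ together with the Hodge index and Bogomolov inequalities (which make each $c_1(L_i)^2\cdot A^{d-2}\le 0$) forces $c_1(L_i)^2\cdot A^{d-2}=0$ for all $i$; a second application of the Hodge index theorem then gives that each $c_1(L_i)$ is numerically trivial.

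Once this is in hand, the rest is formal. A numerically trivial line bundle has the same Hilbert polynomial as $\calo_X$, so $P_{L_i}=P_X$ for every $i$; summing along the filtration yields $P_E=\rk(E)\cdot P_X$, hence $p_E=P_X$ and each $L_i$ has reduced Hilbert polynomial $p_E$. Since moreover every $L_i$ is $\mu$-stable of slope $\mu(E)$, and therefore Gieseker stable, the filtration $0=E_0\subset\cdots\subset E_m=E$ is a Jordan--H\"older filtration of $E$ in the Gieseker sense with all factors in $\Pic^0(X)$, which is the claim.

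I expect the real obstacle to be the step in the second paragraph: passing from the single numerical condition $c_1(L_i)\cdot A^{d-1}=0$ to the vanishing of the entire numerical class of each factor. This is exactly where the hypothesis on $c_2(E)$ and the positivity carried by $\mu$-semistability (equivalently, the analytic input underlying Simpson's theorem) are genuinely used; the reduction to Simpson in the first paragraph and the Hilbert-polynomial bookkeeping in the third are routine.
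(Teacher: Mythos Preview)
Your argument is correct and rests on the same engine as the paper's, namely Simpson's structure theorem for $\mu$-semistable reflexive sheaves with vanishing Chern classes. The paper, however, organizes the proof as an induction on the rank: it applies Simpson's filtration, peels off the top line-bundle quotient, and then invokes the inductive hypothesis on the rank-$(r-1)$ subsheaf. You instead apply Simpson once to obtain the entire filtration and then verify directly that it is a Jordan--H\"older filtration in the Gieseker sense; this is more economical, since Simpson's theorem already hands you all the factors at once and the induction in the paper does no extra work.

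You are also more explicit than the paper on the one nontrivial point, namely why the line-bundle factors lie in $\Pic^0(X)$ and not merely in the degree-zero part of $\Pic(X)$. The paper simply asserts ``$L,L'\in\Pic^0(X)$'' without comment, whereas you correctly isolate this as the substantive step and offer two justifications. Your first argument (the factors are unitary flat, hence numerically trivial) is exactly what underlies Simpson's theorem and is the cleanest route. Your numerical alternative is also viable but needs one extra remark: as written it requires $c_1(E)^2\cdot A^{d-2}=0$, which does not follow from $c_1(E)\cdot A^{d-1}=0$ alone; one should invoke that Simpson's hypothesis is really $\mathrm{ch}_2(E)\cdot A^{d-2}=0$, so that each factor inherits $\mathrm{ch}_2(L_i)\cdot A^{d-2}=\tfrac12 c_1(L_i)^2\cdot A^{d-2}=0$, and then Hodge index finishes the job.
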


By the Corlette--Simpson correspondence \cite[Corollary 1.3]{Simpson}, the hypothesis of the lemma holds whenever $\pi_1(X)$ is abelian. Examples of such varieties are Fano varieties, rational surfaces, abelian varieties, K3 surfaces, products of the previous ones, and quotients of simply connected varieties by finite abelian groups (e.g., Enriques surfaces). One can also check that the conclusion of the lemma holds when $(X,A)$ is a smooth projective surface with $K_X \cdot A \leq 0$ and satisfying  $\chi(\mathcal{O}_X)=1$.

\begin{proof}
We argue by induction on $\rk(E)=r$.  If $E$ is a rank $2$ semistable reflexive sheaf with $c_1=c_2=0$, then by Simpson's result, $E$ must be an extension of $\mu$-stable locally free sheaves with vanishing Chern classes, that is, we can write $E$ in the following short exact sequence
$$ 0 \longrightarrow L \longrightarrow E \longrightarrow L' \longrightarrow 0,$$
where $L$ and $L'$ are in $\Pic^0(X)$. Thus we can take
$$0 \subset L \subset E$$
as the desired Jordan-H\"older filtration.
	
Now suppose the result is valid for a rank less than $r$ and let $E$ be a semistable reflexive sheaf with $c_1=c_2=0$. Again, by Simpson's result, there exists a filtration of $E$
$$0=G_0 \subset G_1 \subset \cdots \subset G_k \subset E,$$
such that each quotient is a $\mu$-stable locally free sheaf with vanishing Chern classes. Let $F:=G_k$ and $F':=E/G_k$ and consider the following exact sequence
$$0\longrightarrow F \longrightarrow E \longrightarrow F' \longrightarrow 0.$$
	
Since $F'$ is a $\mu$-stable locally free with vanishing Chern classes, the hypothesis implies that $F'\in \Pic^0(X)$. Now we can apply the induction hypothesis to $F$ which has rank $r-1$, and we get a Jordan--H\"older filtration of $F$
$$0=F_0 \subset F_1 \subset \cdots \subset F_l \subset F$$
whose factors are in $\Pic^0(X)$. Finally, we take 
$$0=F_0 \subset F_1 \subset \cdots \subset F_l \subset F \subset E,$$
and this is a Jordan--H\"older filtration for $E$ which satisfies our requirements.
\end{proof}

Since varieties satisfying the hypotheses of Lemma \ref{JH in Pic} will play an important role in the present paper, we introduce the following definition.

\begin{defi} \label{def:good}
A smooth polarized projective variety $(X,A)$ is said to be \textit{good} if $h^1(\calo_X)=0$, $\Pic(X)=\Z$, and every $\mu$-stable reflexive sheaf $F$ with vanishing Chern classes is a line bundle.
\end{defi}

Fano manifolds with Picard rank 1 are perhaps the most relevant examples of good projective varieties. 

We complete this section with the following application of Simpson's result.

\begin{lema}\label{lema rank r}
Let $E$ be a $\mu$-semistable sheaf of rank $r$ on a good polarized projective variety $(X,A)$ of dimension $d\ge3$. If $c_1(E)\cdot A^{d-1}=c_{2}(E)\cdot A^{d-2}=0$, then $E$ is quasi-trivial and $\codim Q_E\ge3$.
\end{lema}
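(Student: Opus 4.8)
The plan is to reduce the statement to the reflexive case via double dual, then invoke Lemma \ref{JH in Pic} together with the ``good'' hypothesis. First I would set $F:=E^{\vee\vee}$, the reflexive hull of $E$, and recall the standard exact sequence
$$0\longrightarrow E\longrightarrow F\longrightarrow Q_E\longrightarrow 0,$$
where $Q_E$ is supported in codimension $\geq 2$. Since $E$ is $\mu$-semistable and $\mu$-semistability only depends on the sheaf away from codimension $2$, the reflexive sheaf $F$ is also $\mu$-semistable; moreover $c_1(F)=c_1(E)$, so $c_1(F)\cdot A^{d-1}=0$. The more delicate point is that $F$ should also have vanishing \emph{second} Chern class in the sense of Definition \ref{def:v-chern}: here one uses that $Q_E$, being supported in codimension $\geq 2$, contributes $c_2(Q_E)$ with a definite sign (an effective cycle class), so from $c_2(E)\cdot A^{d-2}=0$ together with the additivity $c_2(E)=c_2(F)-[\,\text{codim-2 part of }Q_E\,]+\dots$ and $\mu$-semistability, one forces the codimension-2 part of $\supp Q_E$ to be empty, hence $c_2(F)\cdot A^{d-2}=0$ and simultaneously $\codim Q_E\geq 3$.

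Next I would apply Lemma \ref{JH in Pic}: since $(X,A)$ is good, every $\mu$-stable reflexive sheaf with vanishing Chern classes is a line bundle, so the reflexive, $\mu$-semistable sheaf $F$ with vanishing Chern classes has a Jordan--Hölder filtration (for Gieseker semistability; note $\mu$-semistable reflexive with vanishing Chern classes is in particular Gieseker semistable with the same reduced Hilbert polynomial as $\oc_X$, by the Grothendieck--Riemann--Roch reformulation given in the text) whose factors all lie in $\Pic^0(X)$. But $h^1(\oc_X)=0$ and $\Pic(X)=\Z$ force $\Pic^0(X)$ to be trivial, so every Jordan--Hölder factor is $\oc_X$. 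It remains to upgrade ``all JH factors are $\oc_X$'' to ``$F\cong\oc_X^{\oplus r}$'': I would do this by induction on $r$, using the extension
$$0\longrightarrow \oc_X\longrightarrow F\longrightarrow F'\longrightarrow 0$$
with $F'$ having all JH factors $\oc_X$, hence (by induction, after checking $F'$ is reflexive — which holds since $F$ is reflexive and $\oc_X$ is a line bundle, or after replacing $F'$ by its reflexive hull and re-running the argument) $F'\cong\oc_X^{\oplus r-1}$, and then observing $\Ext^1(\oc_X^{\oplus r-1},\oc_X)=H^1(\oc_X)^{\oplus r-1}=0$, so the sequence splits and $F\cong\oc_X^{\oplus r}$.

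Finally, putting the pieces together gives $E^{\vee\vee}=F\cong\oc_X^{\oplus r}$, i.e. $E$ is quasi-trivial, and the codimension bound $\codim Q_E\geq 3$ was already extracted in the first step. The main obstacle I anticipate is the bookkeeping in the first step: carefully justifying that the vanishing of $c_2(E)\cdot A^{d-2}$ together with $\mu$-semistability rules out a codimension-2 component of $Q_E$ and transfers the vanishing-Chern-classes condition to $F$. One has to be careful that $c_2$ of a codimension-$\geq 2$ torsion sheaf, intersected with $A^{d-2}$, is genuinely $\leq 0$ (with equality iff the codimension-2 part is empty); this is where the argument really uses that $E$ is a sheaf on a variety of dimension $d\geq 3$ and that the polarization $A$ is ample, and it is the step most likely to need a short separate lemma or an explicit Chern-class computation. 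The remaining steps are essentially formal consequences of Lemma \ref{JH in Pic}, the definition of \emph{good}, and vanishing of $H^1(\oc_X)$.
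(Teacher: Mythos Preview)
Your plan is essentially the paper's proof. The paper also passes to the reflexive hull $F=E^{\vee\vee}$, shows $c_2(F)\cdot A^{d-2}=0$ (simultaneously giving $\codim Q_E\ge 3$), and then uses Simpson's theorem together with the good hypothesis and $h^1(\calo_X)=0$ to conclude $F\cong\calo_X^{\oplus r}$.

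One clarification on where you locate the difficulty. You flag the inequality $c_2(Q_E)\cdot A^{d-2}\le 0$ as the delicate point, but that part is elementary: a torsion sheaf supported in codimension $\ge 2$ has $c_1=0$ and $\mathrm{ch}_2$ equal to the effective codimension-$2$ cycle class of its support, so $c_2(Q_E)\cdot A^{d-2}\le 0$ with equality exactly when that class vanishes. Additivity then only gives $c_2(F)\cdot A^{d-2}\le 0$. The step that actually uses $\mu$-semistability---and which you should name explicitly---is the Bogomolov inequality for $F$: since $F$ is $\mu$-semistable with $c_1(F)\cdot A^{d-1}=0$ (hence $c_1(F)=0$, as $\Pic(X)=\Z$), one has $\Delta(F)\cdot A^{d-2}=2r\,c_2(F)\cdot A^{d-2}\ge 0$, which forces equality. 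The paper does exactly this, phrased via the leading coefficient of $P_{Q_E}$.

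A minor streamlining: the paper invokes Simpson's theorem directly rather than routing through Lemma~\ref{JH in Pic}, so it never has to argue that $F$ is Gieseker semistable. Your parenthetical justification for Gieseker semistability is not quite right---vanishing of $c_1$ and $c_2$ only pins down the top coefficients of the Hilbert polynomial, not the full reduced polynomial. Since the proof of Lemma~\ref{JH in Pic} is itself just Simpson's theorem plus the good hypothesis, going straight to Simpson avoids this detour entirely.
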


\begin{proof}
Since $E$ is torsion-free, we have $\dim Q_E\leq d-2$ by \cite[II, Corollary of Lemma 1.1.8]{Okonek}. It follows that we can write the Hilbert polynomial of $Q_E$ as $P_{Q_E}(t)=at^{d-2}+q(t)$ for some $a\in\Q_{\ge0}$ and $q \in \Q[t]$ with $\deg(u) \leq d-3$. Since $c_1(E)\cdot A^{d-1}=c_{2}(E)\cdot A^{d-2}=0$, it follows that the Hilbert polynomial of $E$ is of the form $P_E(t)=r\cdot P_X(t) + v(t)$ where $v \in \Q[t]$ with $\deg(v) \leq d-3$ and $r:=\rk(E)$. 
Hence, we can use the standard exact sequence
\begin{equation} \label{eq:std-sqc}
0 \longrightarrow E \longrightarrow E^{\vee\vee} \stackrel{\varphi_E}{\longrightarrow} Q_E \longrightarrow 0
\end{equation}
to note that 
$$\begin{array}{lcl}
P_{E^{\vee\vee}}(t) & = & P_E(t)+P_{Q_E}(t) \\
& = & r\cdot P_X(t)+at^{d-2}+q(t)+v(t).
\end{array}$$
A simple calculation with Grothendieck--Riemann--Roch yields 
$$ a = -\dfrac{1}{(d-2)!}c_2(E^{\vee\vee})\cdot A^{d-2} $$
since $c_1(E^{\vee\vee})\cdot A^{d-1}=c_1(E)\cdot A^{d-1}=0$. Since $E^{\vee\vee}$ is $\mu$-semistable, we can apply the Bogomolov inequality \cite[Theorem 7.1]{Huy-Lhen}, obtaining the following inequality
$$ \Delta(E^{\vee\vee}) = 2r c_2(E^{\vee\vee})\cdot A^{d-2} \ge0 $$
thus $a\le0$. It follows that $a=0$, thus $E^{\vee\vee}$ is a $\mu$-semistable reflexive sheaf of rank $r$ on $X$ with $c_1(E^{\vee\vee})\cdot A^{d-1}=c_2(E^{\vee\vee})\cdot A^{d-2}=0$. According to Simpson's result, $E^{\vee\vee}$ must be a successive extension of copies of $\calo_X$, since $\Pic^0(X)=\{\calo_X\}$. But the hypothesis $h^1(\calo_X)=0$ implies that extensions of $\calo_X$ by itself are trivial, we conclude that $E^{\vee\vee}=\calo_X^{\oplus r}$. Moreover, $P_{Q_E}(t)=q(t)$ so $\deg(P_{Q_E})\le d-3$ and $\codim Q_E\ge3$.
\end{proof}

% In particular, we note that every $\mu$-semistable reflexive sheaf $F$ with $c_1(F)\cdot A^{d-1}=c_{2}(F)\cdot A^{d-2}=0$ on a good polarized projective variety is also semistable. As we will see below, this is no longer true if one considers \textit{torsion-free} sheaves with vanishing first and second Chern classes.

%%%%%%%%%%%%%%%%%%%%%%%%%%%%%%%%%%%%%%%%%%%%%%%%%%%%%%%%%%%%%%%%%%%%%%%%
%%%%%%%%%%%%%%%%%%%%%%%%%%%%%%%%%%%%%%%%%%%%%%%%%%%%%%%%%%%%%%%%%%%%%%%%

\section{Moduli spaces of semistable quasi-trivial sheaves} \label{sect mod spaces}

Let $(X,A)$ be a smooth polarized variety of dimension $d$, and let $V_r$ denote a complex vector space of dimension $r$.

If $E$ is a quasi-trivial sheaf of rank $r$ on $X$, then the standard exact sequence \eqref{eq:std-sqc} provides an element $(\varphi_E,Q_E)$ of the $\quot$ scheme $\quot(V_r\otimes\calo_X,u)$, where $u(t)=r\cdot P_X(t)-P_E(t)$ is a rational polynomial with rational coefficients of degree $\le d-2$. Applying the functor $\Hom(\cdot,\calo_X)$ to the exact sequence in display \eqref{eq:std-sqc}, we obtain
\begin{equation} \label{eq:v_r}
V_r\simeq\Hom(E,\calo_X)    
\end{equation}
since $\Hom(Q_E,\calo_X)=0$ (because $Q$ is a torsion sheaf) and
$$ \Ext^1(Q_E,\calo_X)\simeq H^{n-1}(Q\otimes\omega_X)^*=0$$
since $\dim Q_E\le n-2$.

Conversely, consider an element $(\varphi,Q)$ of the $\quot$ scheme $\quot(V_r\otimes\calo_X,u)$ with $u\in \Q[t]$, that is, $V_r\otimes\calo_X\twoheadrightarrow Q$ is an epimorphism onto a sheaf $Q$ with Hilbert polynomial $P_Q(t)=u(t)$. Set $E:=\ker\varphi$; we have a short exact sequence
\begin{equation}\label{sequencia fundamental}
0\longrightarrow E \longrightarrow V_r\otimes\calo_X \longrightarrow Q\longrightarrow0.
\end{equation}
Clearly, $E$ is a torsion-free sheaf of rank $r$; note that
$$ P_E(t) = P_{V_r\otimes\calo_X}(t)-P_Q(t) = r\cdot P_X(t)-u(t) .$$

\begin{prop}\label{ida mu stab}
Given $(\varphi,Q)\in\quot(V_r\otimes\calo_X,u)$ with $\deg(u) \leq d-2$, the sheaf $E:=\ker\varphi$ is a quasi-trivial sheaf of rank $r$.
\end{prop}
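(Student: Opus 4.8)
The plan is to prove the stronger assertion that $E^{\vee}\simeq\calo_X^{\oplus r}$; dualizing once more then gives $E^{\vee\vee}\simeq(\calo_X^{\oplus r})^{\vee}\simeq\calo_X^{\oplus r}$, which is exactly the quasi-triviality we want (that $E$ is torsion-free of rank $r$ has already been observed right after \eqref{sequencia fundamental}).

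First I would record the only numerical input that is needed. Since $P_Q(t)=u(t)$ has degree at most $d-2$, the support of $Q$ has dimension at most $d-2$, i.e. $\codim_X\supp Q\ge 2$. (If $Q=0$ there is nothing to prove, as then $E=V_r\otimes\calo_X$.)

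Next I would dualize the defining sequence \eqref{sequencia fundamental}: applying $\homc(-,\calo_X)$ yields the exact sequence
\[
0 \to \homc(Q,\calo_X) \to \homc(V_r\otimes\calo_X,\calo_X) \to \homc(E,\calo_X) \to \extc^1(Q,\calo_X) \to \extc^1(V_r\otimes\calo_X,\calo_X).
\]
Here $\homc(Q,\calo_X)=0$ because $Q$ is torsion, $\extc^1(V_r\otimes\calo_X,\calo_X)=0$ because $\calo_X$ is locally free, and $\extc^1(Q,\calo_X)=0$ by the standard vanishing $\extc^i(\mathcal{F},\calo_X)=0$ for $i<\codim_X\supp\mathcal{F}$ (a local statement, proved via a locally free resolution; cf. \cite[Chapter~II, \S1]{Okonek}), applied with $i=1$ and the codimension bound of the previous step. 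Hence the middle map is an isomorphism and
\[
E^{\vee}\;\simeq\;\homc(V_r\otimes\calo_X,\calo_X)\;\simeq\;V_r^{*}\otimes\calo_X\;\simeq\;\calo_X^{\oplus r}.
\]
Dualizing once more finishes the argument.

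I do not expect a genuine obstacle: morally this is just the observation that the inclusion $E\hookrightarrow V_r\otimes\calo_X$ is an isomorphism away from a closed set of codimension $\ge 2$, and therefore becomes an isomorphism after applying $(-)^{\vee\vee}$. The one point deserving care is the vanishing $\extc^1(Q,\calo_X)=0$, which is precisely where the hypothesis $\deg u\le d-2$ (as opposed to merely $\deg u\le d-1$) enters; it can simply be quoted from the standard grade/codimension estimate for $\extc$-sheaves on a smooth variety.
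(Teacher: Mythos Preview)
Your argument is correct and is essentially identical to the paper's own proof: both apply $\homc(-,\calo_X)$ to the sequence \eqref{sequencia fundamental}, use the codimension bound $\codim Q\ge 2$ to kill $Q^\vee$ and $\extc^1(Q,\calo_X)$, and conclude $E^\vee\simeq V_r^*\otimes\calo_X$. The paper cites \cite[Proposition~1.1.6]{Huy-Lhen} for the $\extc$-vanishing rather than \cite{Okonek}, and omits your explicit mention of the term $\extc^1(V_r\otimes\calo_X,\calo_X)$, but these are cosmetic differences.
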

\begin{proof}
Applying $\homc(-,\calo_X)$ to sequence \eqref{sequencia fundamental} we have
$$ 0\longrightarrow Q^\vee \longrightarrow (V_r\otimes\calo_X)^\vee \longrightarrow E^\vee \longrightarrow \extc^1(Q,\calo_X). $$
Since $\codim(Q) \geq 2$, we have that $Q^\vee=\extc^1(Q,\calo_X)=0$ by \cite[Proposition 1.1.6]{Huy-Lhen}, so that $E^\vee\cong V_r^*\otimes\calo_X$. Since $E^\vee$ is properly $\mu$-semistable, then so is $E$. %by Proposition \cite[II, Lemma 1.2.4]{Okonek}.
\end{proof}

However, different points in $\quot(V_r\otimes\calo_X,u)$ may lead to isomorphic quasi-trivial sheaves. Indeed, consider the following action of $\GL(V_r)\simeq\Aut(V_r\otimes\calo_X)$ on $\quot(V_r\otimes\calo_X,u)$:
\begin{equation}\label{action}
g\cdot(\varphi,Q) := (\varphi\circ g,Q).
\end{equation}
Note that $g\cdot(\varphi,Q)$ is clearly in $\quot(V_r\otimes\calo_X,u)$ again. Previous considerations lead to the following theorem characterizing the set of isomorphism classes of quasi-trivial sheaves. Note that scalar multiples of the identity $\mathbf{1}\in\GL(V_r)$ act trivially on the Quot scheme.

\begin{teo} \label{thm:bijection}
There is a bijection between the set of isomorphism classes of rank $r$ quasi-trivial sheaves with Hilbert polynomial $P(t)$ on $X$, and the set of orbits
$$ \quot(V_r\otimes\calo_X,u) \big/ \PGL_r  ~~ {\rm where} ~~ u(t):=r\cdot P_X(t)-P(t). $$
% \sfrac{\quot(V_r\otimes\calo_X,u)}{\PGL_r}
\end{teo}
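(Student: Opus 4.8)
The plan is to construct a pair of mutually inverse maps between the two sets. For the map $\Phi$ from isomorphism classes of quasi-trivial sheaves to orbits, I would start with a quasi-trivial $E$ of rank $r$ with Hilbert polynomial $P$: since $E^{\vee\vee}\simeq\calo_X^{\oplus r}$ one has $P_{E^{\vee\vee}}=r\cdot P_X$, so the torsion quotient $Q_E$ appearing in \eqref{eq:std-sqc} has Hilbert polynomial $u=r\cdot P_X-P$, of degree $\le d-2$ because $\codim Q_E\ge2$. Picking any isomorphism $\psi\colon E^{\vee\vee}\xrightarrow{\ \sim\ }V_r\otimes\calo_X$, the composite $\varphi_E\circ\psi^{-1}$ is an epimorphism onto $Q_E$, hence a point of $\quot(V_r\otimes\calo_X,u)$. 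To see that this point is well defined modulo $\PGL_r$, I would use that $H^0(\calo_X)=\C$, so $\Aut(V_r\otimes\calo_X)=\GL(V_r)$ and two choices of $\psi$ differ by the action \eqref{action}; and that an isomorphism $E\simeq E'$ induces, via $(-)^{\vee\vee}$, an isomorphism between the two sequences \eqref{eq:std-sqc} intertwining $\varphi_E$ and $\varphi_{E'}$, which again only modifies the representative within its $\GL(V_r)$-orbit. As scalars act trivially on the Quot scheme, this yields $\Phi$.

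Conversely, given an orbit represented by $(\varphi,Q)$, Proposition \ref{ida mu stab} says that $E:=\ker\varphi$ is quasi-trivial of rank $r$, with $P_E=r\cdot P_X-u=P$. Two representatives of the same point of the Quot scheme have literally the same kernel subsheaf of $V_r\otimes\calo_X$, while replacing $(\varphi,Q)$ by $(\varphi\circ g,Q)$ replaces the kernel by $g^{-1}(\ker\varphi)\simeq\ker\varphi$; so $[E]$ depends only on the orbit, producing the candidate inverse $\Psi$.

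It then remains to verify $\Psi\circ\Phi=\id$ and $\Phi\circ\Psi=\id$. The first is immediate, since $\ker(\varphi_E\circ\psi^{-1})=\psi(E)\simeq E$. For the second, starting from $E=\ker\varphi\subset V_r\otimes\calo_X$, the main step is to identify the intrinsic double dual $E^{\vee\vee}$ with $V_r\otimes\calo_X$ in a way compatible with both $\varphi_E$ and $\varphi$: applying $(-)^{\vee\vee}$ to the inclusion $\iota\colon E\hookrightarrow V_r\otimes\calo_X$ gives a morphism $\iota^{\vee\vee}\colon E^{\vee\vee}\to V_r\otimes\calo_X$ of reflexive sheaves which is the identity over the big open set $U:=X\setminus\supp Q$, whose complement has codimension $\ge2$. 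Since a reflexive sheaf $\F$ on a smooth variety satisfies $\F=j_*(\F|_U)$ for the open immersion $j\colon U\hookrightarrow X$, this forces $\iota^{\vee\vee}$ to be an isomorphism, and it carries $\ker\varphi_E=E\subset E^{\vee\vee}$ onto $\iota(E)=\ker\varphi$; hence $\varphi$ and $\varphi_E\circ(\iota^{\vee\vee})^{-1}$ are epimorphisms with the same kernel, i.e.\ the same point of $\quot(V_r\otimes\calo_X,u)$. Choosing $\psi=\iota^{\vee\vee}$ in the construction of $\Phi$ then returns exactly the class of $(\varphi,Q)$.

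I expect the only genuinely non-formal point to be this last identification — that reflexivity forces a morphism agreeing with the identity outside a closed subset of codimension $\ge2$ to be an isomorphism; everything else is bookkeeping with the action \eqref{action}, the functoriality of $(-)^{\vee\vee}$, and the codimension bound $\codim Q_E\ge2$ for the cokernel of the double-dual map of a torsion-free sheaf.
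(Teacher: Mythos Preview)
Your proposal is correct and follows essentially the same approach as the paper's proof: both show that the kernels of two Quot-points are isomorphic if and only if the points lie in the same $\PGL_r$-orbit, the crux being that the relevant double-dual map $E^{\vee\vee}\to V_r\otimes\calo_X$ is an isomorphism. The paper packages this as checking the two implications directly rather than constructing explicit mutual inverses $\Phi,\Psi$, and argues that the double-dual map is an isomorphism via a short injectivity/torsion-free argument rather than your reflexivity plus codimension-$2$ extension; these are cosmetic differences.
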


\begin{proof}
Let $(\varphi,Q)$, $(\varphi',Q')$ be points in $\quot(V_r\otimes\calo_X,u)$ lying in the same $\GL_r$-orbit, that is, there is some $g\in \GL_r$ such that $(\varphi,Q)=g\cdot (\varphi',Q')=(\varphi'\circ g,Q')$. Hence, there is an isomorphism $f:Q\to Q'$ such that $f\circ \varphi=\varphi'\circ g$. Let $E=\ker\varphi$ and $E'=\ker\varphi'$ be the corresponding $\mu$-semistable sheaves given by Proposition \ref{ida mu stab}. Thus we can consider the following diagram
\begin{equation}\label{diag orbits}
\begin{split} \xymatrix{
	0 \ar[r]& E \ar[r] \ar[d]_{h} & V_r\otimes\calo_X \ar[r]^{\varphi}\ar[d]^{g} & Q \ar[r] \ar[d]^{f} & 0 \\
	0 \ar[r] & E' \ar[r] & V_r\otimes\calo_X \ar[r]^{\varphi'} & Q' \ar[r] & 0. 
} \end{split}
\end{equation}
Since $g$ and $f$ are isomorphisms, by the snake lemma, follows that $h:E\to E'$ is also an isomorphism, as desired.
	
Now let $E$ and $E'$ two isomorphic quasi-trivial sheaves; let $h:E\to E'$ be an isomorphism. As we pointed out in the first paragraph of this section, we obtain corresponding points $(\varphi,Q)$ and $(\varphi',Q')$ in $\quot(V_r\otimes\calo_X,u)$ where $u(t):=r\cdot P_X(t)-P(t)$. Since $E^{\vee\vee}\simeq(E')^{\vee\vee}\simeq V_r\otimes\calo_X$, we obtain an induced morphism $h^{\vee\vee}:V_r\otimes\calo_X\to V_r\otimes\calo_X$; if this is not a monomorphism, then the kernel of the induced morphism $\ker h^{\vee\vee} \to Q$ would is actually injective, contradicting the fact that $\ker h^{\vee\vee}$ is torsion-free; it follows that $h^{\vee\vee}$ must be an isomorphism.

We then construct a commutative diagram like the one in display \eqref{diag orbits}, with $g=h^{\vee\vee}$. In particular, we get an isomorphism $f:Q\to Q'$ and a commutative diagram 
$$ \xymatrix{
V_r\otimes\calo_X \ar[r]^{\varphi} \ar[rd]_{\varphi'\circ g} & Q \ar[d]^{f}\\
& Q',} $$
that is, $(\varphi,Q)=(\varphi'\circ g,Q')$ in $\quot(V_r\otimes\calo_X,n)$. Therefore $(\varphi,Q)=g\cdot(\varphi',Q')$.
\end{proof}

\begin{obs} \label{rmk d}
In \cite{Cazzaniga Ricolfi} the authors studied sheaves on $\p^d$ framed along a fixed hyperplane $D\subset \p^d$, that is, pairs $(E,\phi)$ consisting of a torsion-free sheaf $E$ on $\p^d$ together with an isomorphism $E|_D\overset{\sim}{\longrightarrow}\oc_D^{\oplus r}$, where $r=\rk E$. They showed that the moduli space $F_{r,n}(\p^d)$ of $D$-framed sheaves on $\p^d$ with Chern character $v_{r,n}:=(r,0,\ldots,0,-n)$ is isomorphic to the quot scheme of points $\quot(\oc_{\mathbb{A}^d}^{\oplus r},n)$ for $m\geq 3$.
	
By \cite[Corollary 1.6]{Cazzaniga Ricolfi}, for every $D$-framed sheaf $(E,\phi)$ with Chern character $v_{r,n}$, we can fit $E$ in a short exact sequence 
$$ 0\to E \to \oc^{\oplus r}_{\p^d} \to Q \to 0, $$
where $Q$ has finite support contained in $\A^d=\p^d\setminus D$. It follows that $E^{\vee\vee}=\calo^{\oplus r}_{\p^d}$ thus $E$ is a quasi-trivial sheaf.

We believe that similar results should also hold for framed sheaves on more general pairs $(X,D)$ of varieties with divisor.
\end{obs}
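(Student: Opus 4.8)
The only part of this remark that calls for an argument, as opposed to a citation, is the closing implication of its middle paragraph: that the sheaf $E$ fitting into the short exact sequence
$$ 0\to E \to \oc^{\oplus r}_{\p^d} \to Q \to 0 $$
supplied by \cite[Corollary 1.6]{Cazzaniga Ricolfi}, with $Q$ of finite support in $\A^d$, satisfies $E^{\vee\vee}\cong\oc_{\p^d}^{\oplus r}$ and is therefore quasi-trivial in the sense of Definition \ref{def:quasi-trivial}. The isomorphism $F_{r,n}(\p^d)\cong\quot(\oc_{\A^d}^{\oplus r},n)$ and the very existence of the displayed sequence are quoted verbatim from loc.\ cit.\ and are not reproved. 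The plan is thus to carry out, in this framed setting, exactly the local computation already performed in the proof of Proposition \ref{ida mu stab}.

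First I would apply the local dualizing functor $\homc(-,\oc_{\p^d})$ to the sequence above, yielding the exact sequence
$$ 0\to Q^\vee \to (\oc^{\oplus r}_{\p^d})^\vee \to E^\vee \to \extc^1(Q,\oc_{\p^d}). $$
Since $Q$ has finite support it is $0$-dimensional, hence of codimension $d\ge 3$ in $\p^d$; by \cite[Proposition 1.1.6]{Huy-Lhen} (equivalently, by local duality on the smooth variety $\p^d$) one has $\extc^i(Q,\oc_{\p^d})=0$ for every $i<d$, and in particular $Q^\vee=\homc(Q,\oc_{\p^d})=0$ together with $\extc^1(Q,\oc_{\p^d})=0$. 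The sequence therefore collapses to an isomorphism $E^\vee\cong(\oc^{\oplus r}_{\p^d})^\vee\cong\oc^{\oplus r}_{\p^d}$.

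Finally, since $E^\vee\cong\oc^{\oplus r}_{\p^d}$ is locally free and hence reflexive, dualizing once more gives $E^{\vee\vee}=(E^\vee)^\vee\cong(\oc^{\oplus r}_{\p^d})^\vee\cong\oc^{\oplus r}_{\p^d}$, which is precisely the quasi-trivial condition. I do not expect a genuine obstacle: the entire mathematical content reduces to the vanishing of the two local $\extc$-sheaves of a finite-length module, which is immediate from the codimension bound, so the argument is a direct specialization of Proposition \ref{ida mu stab} to the case in which $Q$ is $0$-dimensional. The concluding sentence regarding general pairs $(X,D)$ is a heuristic expectation only and is not claimed to be established here.
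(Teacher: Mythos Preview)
Your proposal is correct and matches the paper's approach: the remark carries no separate proof in the paper beyond the phrase ``It follows that $E^{\vee\vee}=\calo^{\oplus r}_{\p^d}$'', which is an immediate invocation of the dualization argument from Proposition~\ref{ida mu stab}, exactly as you reproduce it. There is nothing to add.
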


It is easy to see that there are quasi-trivial sheaves that are not semistable: just consider $\calo_X\oplus\I_Z$, where $\I_Z$ denotes the ideal sheaf of a subscheme $Z\subset X$ of codimension at least $2$. Our next task is to characterize semistable quasi-trivial sheaves in terms of the corresponding point in a Quot scheme.

\begin{lema} \label{lem:subsheaf}
Let $E$ be a quasi-trivial sheaf. Every saturated subsheaf $F\hookrightarrow E$ with $\mu(F)=0$, and every torsion-free quotient $E\twoheadrightarrow G$ with $\mu(G)=0$ is also quasi-trivial.
\end{lema}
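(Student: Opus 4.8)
The plan is to reduce both assertions to a single elementary claim, which I will call $(\star)$: \emph{every reflexive sheaf $\mathcal{G}$ on $(X,A)$ that admits an injection $\mathcal{G}\hookrightarrow\oc_X^{\oplus N}$ and has $c_1(\mathcal{G})\cdot A^{d-1}=0$ is isomorphic to $\oc_X^{\oplus\rk\mathcal{G}}$}. Granting $(\star)$, the lemma follows at once. Write $E^{\vee\vee}=\oc_X^{\oplus r}$, so the standard sequence is $0\to E\to\oc_X^{\oplus r}\to Q_E\to0$ with $\codim Q_E\ge2$; applying $\homc(-,\oc_X)$ and using $\homc(Q_E,\oc_X)=\extc^1(Q_E,\oc_X)=0$ also gives $E^{\vee}=\oc_X^{\oplus r}$. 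If $F\hookrightarrow E$ has $\mu(F)=0$, the composition $F\hookrightarrow E\hookrightarrow\oc_X^{\oplus r}$ extends to a morphism $F^{\vee\vee}\to\oc_X^{\oplus r}$ (since $\Hom(F,\oc_X^{\oplus r})=\Hom(F^{\vee\vee},\oc_X^{\oplus r})$, $\oc_X^{\oplus r}$ being locally free), which is injective as $F^{\vee\vee}$ is torsion-free; because $c_1(F^{\vee\vee})\cdot A^{d-1}=c_1(F)\cdot A^{d-1}=0$, $(\star)$ yields $F^{\vee\vee}\cong\oc_X^{\oplus\rk F}$. Dually, if $E\twoheadrightarrow G$ with $G$ torsion-free and $\mu(G)=0$, then $\homc(-,\oc_X)$ applied to the defining sequence gives $G^{\vee}\hookrightarrow E^{\vee}=\oc_X^{\oplus r}$, and since $G^{\vee}$ is reflexive with $c_1(G^{\vee})\cdot A^{d-1}=-c_1(G)\cdot A^{d-1}=0$, $(\star)$ gives $G^{\vee}\cong\oc_X^{\oplus\rk G}$, whence $G^{\vee\vee}=(G^{\vee})^{\vee}\cong\oc_X^{\oplus\rk G}$.

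To prove $(\star)$ I would induct on $s:=\rk\mathcal{G}$, the case $s=0$ being trivial. Let $q_1,\dots,q_N\colon\mathcal{G}\to\oc_X$ be the components of the embedding. Choosing $s$ of them that are linearly independent at the generic point gives an injection $\mathcal{G}\hookrightarrow\oc_X^{\oplus s}$ with torsion cokernel; comparing first Chern classes, $-c_1(\mathcal{G})$ is the class of an effective divisor, which together with $c_1(\mathcal{G})\cdot A^{d-1}=0$ and ampleness of $A$ forces $c_1(\mathcal{G})=0$ (the same remark shows that any torsion-free subsheaf of a trivial bundle has $-c_1$ effective). Now fix a nonzero component $q:=q_1$, put $\I_W:=\im q\subseteq\oc_X$ and $V:=\ker q$, so $0\to V\to\mathcal{G}\xrightarrow{q}\I_W\to0$. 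Since $c_1(\mathcal G)=0$ and $c_1(\I_W)=-[W_{\mathrm{div}}]$ (the divisorial part of $W$), we get $c_1(V)=[W_{\mathrm{div}}]$ effective; but $-c_1(V)$ is effective as well, so $c_1(V)=0$ and $W$ has codimension $\ge2$. Then $\I_W$ is torsion-free, hence $V$ is a saturated subsheaf of the reflexive sheaf $\mathcal{G}$ and so is reflexive, of rank $s-1$, and it embeds in $\oc_X^{\oplus N}$; by induction $V\cong\oc_X^{\oplus(s-1)}$.

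The remaining step is the crux, and the one I expect to be the only real obstacle: deducing $W=\emptyset$ from $0\to\oc_X^{\oplus(s-1)}\to\mathcal{G}\xrightarrow{q}\I_W\to0$ with $\codim W\ge2$. Reflexivity of $\mathcal{G}$ together with vanishing of its Chern classes does not suffice by itself (there are $\mu$-stable bundles with $c_1=0$), so the embedding must be used. Since $\bigcap_i\ker q_i=0$ and $V=\ker q_1$, the constant functionals $q_i|_V\colon V=\oc_X^{\oplus(s-1)}\to\oc_X$ for $i\ge2$ are jointly injective; acting by a suitable $g\in\GL_N(\C)\subseteq\Aut(\oc_X^{\oplus N})$ fixing the first coordinate, I may assume $q_i|_V$ is the $(i-1)$-st coordinate functional for $2\le i\le s$ and vanishes for $i>s$. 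For $i>s$, $q_i$ then kills $V=\ker q_1$ and hence factors through $\mathcal{G}\twoheadrightarrow\I_W$; as $\Hom(\I_W,\oc_X)=H^0(\oc_X)=\C$ (using $\codim W\ge2$), this forces $q_i=c_iq_1$ for scalars $c_i$. Thus the embedding factors through $(q_1,\dots,q_s)\colon\mathcal{G}\hookrightarrow\oc_X^{\oplus s}$, under which $V$ is identified with $0\oplus\oc_X^{\oplus(s-1)}$ and hence $\mathcal{G}$ with $\pi_1^{-1}(\I_W)=\I_W\oplus\oc_X^{\oplus(s-1)}$. Since $\mathcal{G}$ is reflexive, $\I_W$ must be reflexive, hence a line bundle, hence the ideal of an effective Cartier divisor that is simultaneously of codimension $\ge2$ — so $W=\emptyset$ and $\mathcal{G}\cong\oc_X^{\oplus s}$, completing the induction. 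Everything else (extending maps to double duals, Chern classes of ideal sheaves, positivity against $A^{d-1}$) is routine.
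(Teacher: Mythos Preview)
Your argument is correct and takes a genuinely different route from the paper. The paper handles both assertions simultaneously via a $3\times 3$ diagram: since $F$ is saturated in $E$, double-dualizing the column $0\to F\to E\to G\to 0$ produces $0\to F^{\vee\vee}\to\oc_X^{\oplus r}\to G^{\vee\vee}\to 0$, and then one invokes \cite[Corollary~1.6.11]{Huy-Lhen} (a saturated slope-$0$ subsheaf of the polystable sheaf $\oc_X^{\oplus r}$ is a direct summand, hence trivial) to conclude $F^{\vee\vee}\cong\oc_X^{\oplus s}$ and $G^{\vee\vee}\cong\oc_X^{\oplus(r-s)}$. You instead treat the two cases separately---embedding $F^{\vee\vee}\hookrightarrow\oc_X^{\oplus r}$ for the subsheaf and $G^{\vee}\hookrightarrow E^{\vee}=\oc_X^{\oplus r}$ for the quotient---and reduce both to your claim $(\star)$, which you then prove by an explicit, coordinate-based induction on the rank, splitting off one factor at a time. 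The paper's approach is shorter and more conceptual, making transparent that the result is really a consequence of the semisimplicity of $\oc_X^{\oplus r}$ in the category of slope-$0$ $\mu$-semistable sheaves; your approach is longer but entirely self-contained, avoids the black-box citation, and is in fact marginally stronger, since your reduction for $F$ never uses the saturation hypothesis (only $\mu(F)=0$).
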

\begin{proof}
Let $E$ be a quasi-trivial sheaf, and let $F\hookrightarrow E$ be a saturated subsheaf with $\mu(F)=0$ so that $G:=E/F$ is torsion-free and $\mu(G)=0$. We then obtain the following commutative diagram
$$\xymatrix{
& 0 \ar[d]& 0\ar[d] & 0\ar[d] & \\
0\ar[r] & F\ar[r]\ar[d] & F^{\vee\vee}\ar[r]\ar[d] & Q_F\ar[d]\ar[r] & 0 \\
0 \ar[r] & E\ar[r]\ar[d] & V_r\otimes\calo_X\ar[d] \ar[r]& Q \ar[r]\ar[d] & 0 \\
0\ar[r] & G\ar[r]\ar[d] & G^{\vee\vee}\ar[r]\ar[d] & Q_G\ar[d]\ar[r] & 0 \\
& 0 & 0 & 0 & .
}$$
Since $F^{\vee\vee}$ is a saturated subsheaf of $V_r\otimes\calo_X$ with $\mu(F^{\vee\vee})=\mu(F)=0$, then \cite[Corollary 1.6.11]{Huy-Lhen} implies that $F^{\vee\vee}=V_s\otimes\calo_X$ for some $s\in\{1,\dots,r-1\}$. It then follows that $G^{\vee\vee}\simeq V_{r-s}\otimes\calo_X$. 
\end{proof}

As a first application of the previous lemma, we characterize the Jordan-Holder filtration of quasi-trivial sheaves.

\begin{prop} \label{lem:jh}
If $E$ is a quasi-trivial sheaf of rank $r$, then $E$ admits a filtration whose factors are ideal sheaves $\I_{Z_i}$ of subschemes $Z_i\subset X$, $i=1,\dots,r$ of codimension at least 2. Moreover, $\sum_i P_{\calo_{Z_i}(t)}=r\cdot P_X(t)-P_E(t)$.
\end{prop}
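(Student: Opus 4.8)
The plan is to induct on the rank $r$, using the snake-lemma diagram from Lemma \ref{lem:subsheaf} to peel off one ideal sheaf at a time. The base case $r=1$ is immediate: a quasi-trivial sheaf of rank $1$ has $E^{\vee\vee}=\calo_X$, so $E\simeq\I_Z$ for a subscheme $Z$ of codimension $\ge2$, and the filtration is the trivial one $0\subset E$. For the inductive step, suppose the statement holds for all quasi-trivial sheaves of rank $<r$ and let $E$ be quasi-trivial of rank $r$. The first task is to produce a rank-$1$ subsheaf $F\hookrightarrow E$ which is saturated and has $\mu(F)=0$; then Lemma \ref{lem:subsheaf} gives that $F$ is quasi-trivial of rank $1$, hence $F\simeq\I_{Z_r}$ for some $Z_r\subset X$ of codimension $\ge2$, and that $G:=E/F$ is quasi-trivial of rank $r-1$, to which the induction hypothesis applies.

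To find such an $F$: from \eqref{eq:v_r} we have $V_r^*\simeq\Hom(E,\calo_X)$, which is nonzero since $r\ge1$, so pick any nonzero $\lambda\colon E\to\calo_X$. Its image is a nonzero subsheaf of $\calo_X$, hence of the form $\I_W$ (twisted by $\calo_X$, so still of slope $0$) for some $W$ of codimension $\ge1$; in fact since $\ker\lambda$ has rank $r-1$ and $\I_W$ has rank $1$, and $c_1(E)=0$ forces the degrees to split as $\deg(\ker\lambda)+\deg(\I_W)=0$ — but $\ker\lambda\subset E\subset V_r\otimes\calo_X$ gives $\deg(\ker\lambda)\le0$ while $\deg(\I_W)=-\deg W\le0$, so both vanish and $W$ has codimension $\ge2$. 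Thus the image of $\lambda$ is an ideal sheaf $\I_W$ with $\mu=0$. Now let $F$ be the saturation of $\ker\lambda$ inside $E$; then $F$ has rank $r-1$ and, being squeezed between $\ker\lambda$ and $E$ inside $V_r\otimes\calo_X$, has $\mu(F)=0$. Apply Lemma \ref{lem:subsheaf}: $F$ is quasi-trivial of rank $r-1$ and $G:=E/F$ is quasi-trivial of rank $1$, so $G\simeq\I_{Z_r}$. Running the induction hypothesis on $F$ yields a filtration $0=F_0\subset F_1\subset\cdots\subset F_{r-1}=F$ with $F_i/F_{i-1}\simeq\I_{Z_i}$, $\codim Z_i\ge2$; appending $E$ gives the desired filtration $0=F_0\subset\cdots\subset F_{r-1}=F\subset E$ of $E$.

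(Alternatively, and perhaps more cleanly, one can invoke Theorem \ref{thm:bijection} / Proposition \ref{lem:jh}'s companion results to say that any quasi-trivial $E$ sits in $0\to E\to V_r\otimes\calo_X\to Q\to0$ and then use \cite[Corollary 1.6.11]{Huy-Lhen} to extract a rank-$1$ saturated $\calo_X$-subsheaf of $V_r\otimes\calo_X$ whose preimage in $E$ is the sought $F$; the slope bookkeeping is the same.)

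For the last sentence, additivity of Hilbert polynomials along the filtration gives $P_E(t)=\sum_{i=1}^r P_{\I_{Z_i}}(t)$, and from the rank-$1$ standard sequence $0\to\I_{Z_i}\to\calo_X\to\calo_{Z_i}\to0$ we get $P_{\I_{Z_i}}(t)=P_X(t)-P_{\calo_{Z_i}}(t)$. Summing over $i$ yields $P_E(t)=r\cdot P_X(t)-\sum_i P_{\calo_{Z_i}}(t)$, i.e. $\sum_i P_{\calo_{Z_i}}(t)=r\cdot P_X(t)-P_E(t)$, as claimed.

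The main obstacle is the production of the saturated rank-$1$ subsheaf $F$ with $\mu(F)=0$: one must be careful that the image of a nonzero map $E\to\calo_X$ really is an ideal sheaf of a codimension-$\ge2$ subscheme (not merely codimension $\ge1$), and that saturating the kernel does not change the slope. Both points follow from $c_1(E)=0$ together with $E\subset V_r\otimes\calo_X$ (so every subsheaf has $\mu\le0$), which pins the relevant slopes to $0$; once that is in hand, Lemma \ref{lem:subsheaf} does the rest and the induction closes.
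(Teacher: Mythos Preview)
Your argument is correct, but it proceeds differently from the paper's. The paper's proof also inducts on rank, but to peel off an ideal-sheaf factor it takes $F\hookrightarrow E$ to be the penultimate term of a Jordan--H\"older filtration for the $\mu$-semistable sheaf $E$; then $G:=E/F$ is $\mu$-stable with $\mu(G)=0$, and since $G$ is quasi-trivial (Lemma~\ref{lem:subsheaf}) one has $G^{\vee\vee}\simeq\calo_X^{\oplus\rk G}$, which is only $\mu$-stable when $\rk G=1$, forcing $G\simeq\I_Z$. Your route instead exploits \eqref{eq:v_r} directly: a nonzero $\lambda\in\Hom(E,\calo_X)$ yields $\ker\lambda$ of slope $0$, whose saturation is the desired rank-$(r-1)$ quasi-trivial subsheaf. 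This is arguably more elementary, since it bypasses the existence of $\mu$-Jordan--H\"older factors entirely; the paper's approach, on the other hand, makes the link with the Jordan--H\"older filtration explicit, which is relevant later (e.g.\ in Proposition~\ref{nc(n,n)}).

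Two minor points of presentation. First, your opening paragraph announces a rank-$1$ saturated subsheaf $F$ with quotient of rank $r-1$, but the second paragraph actually constructs the opposite ($F$ of rank $r-1$, quotient $\I_{Z_r}$ of rank $1$); the executed version is the correct one, so you should align the plan with it. Second, the parenthetical ``(twisted by $\calo_X$, so still of slope $0$)'' is unclear and unnecessary; the point is simply that any nonzero coherent subsheaf of $\calo_X$ is an ideal sheaf $\I_W$, and your degree count (both $\deg(\ker\lambda)\le0$ and $\deg(\I_W)\le0$, summing to $0$) then forces $\deg\I_W=0$, hence $W$ has no divisorial part. The Hilbert-polynomial identity at the end is handled exactly as in the paper.
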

\begin{proof}
We proceed by induction on the rank $r$. When $r=1$, then $E=\I_Z$, so the claim is true. Assuming that it also holds for rank $r\ge2$, we show that it holds for rank $r+1$.

Since $E$ is properly $\mu$-semistable, let $F\hookrightarrow E$ be the maximal $\mu$-destabilizing subsheaf. The quotient sheaf $G:=E/F$ is a $\mu$-stable sheaf with $\mu(G)=0$, thus $G=\I_{Z_{r+1}}$ for some subscheme $Z_{r+1}\subset X$ of codimension at least 2. Notice that $F$ is a quasi-trivial sheaf of rank $r$ so, by induction, $F$ admits a filtration whose factors are $r$ ideal sheaves of subschemes of codimension at least 2. It follows that $E$ also admits such a filtration, with factors being all the factors of $F$ plus $\I_{Z_{r+1}}$.

For the last formula, just note that
$$ P_E(t) = \sum_{i=1}^r P_{\I_{Z_i}}(t) = r\cdot P_X(t) - \sum_{i=1}^r P_{\calo_{Z_i}}(t) $$
\end{proof}

Now let $(\varphi,Q)$ be an element in $\quot(V_r\otimes\calo_X,u)$ and set $E=\ker\varphi$. We can relate the (semi)stability of $E$ with the GIT-stability of $(\varphi,Q)$ with respect to the $\GL_r$-action on $\quot(V_r\otimes\calo_X,u)$ given by \eqref{action}. To do this, we use \cite[Lemma 4.4.5]{Huy-Lhen}: a closed point $(\varphi,Q)$ in $\quot(V_r\otimes\calo_X,u)$ is GIT-(semi)stable if, and only if, for every non-trivial proper linear subspace $V_s\subset V_r$ the induced subsheaf $Q':=\varphi(V_s\otimes\calo_X)\hookrightarrow Q$ satisfies the following inequality:
\begin{equation} \label{GIT sst}
\dfrac{P_{Q'}}{s} > ~~ (\geq)~~  \dfrac{P_Q}{r}.
\end{equation}

\begin{teo} \label{thm moduli}
Let $(\varphi,Q)$ be an element in $\quot(\calo_X^{\oplus r},u)$ and let $E=\ker\varphi$. Then $(\varphi,Q)$ is GIT-(semi)stable with respect to the $\GL_r$ action in display \eqref{action} if, and only if, $E$ is (semi)stable.
\end{teo}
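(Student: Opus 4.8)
The plan is to set up a dictionary between the destabilizing data on the two sides of the short exact sequence $0\to E\to V_r\otimes\calo_X\to Q\to 0$ and then match the two numerical inequalities. The key observation is that every saturated subsheaf $F\subset E$ with $\mu(F)=0$ is itself quasi-trivial by Lemma \ref{lem:subsheaf}, so $F^{\vee\vee}=V_s\otimes\calo_X$ for the linear subspace $V_s:=\Hom(E,\calo_X)^*$-dual to the restriction map; indeed, dualizing $F\hookrightarrow E\hookrightarrow V_r\otimes\calo_X$ and using \eqref{eq:v_r} identifies $V_s\subset V_r$ intrinsically. Conversely, given a linear subspace $V_s\subset V_r$, the subsheaf $E_s:=E\cap(V_s\otimes\calo_X)=\ker(\varphi|_{V_s\otimes\calo_X})$ is saturated in $E$ (the quotient $E/E_s$ injects into $V_r\otimes\calo_X/V_s\otimes\calo_X$, hence is torsion-free) and has $\mu(E_s)=0$, and $Q':=\varphi(V_s\otimes\calo_X)$ fits in $0\to E_s\to V_s\otimes\calo_X\to Q'\to 0$. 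So one gets a bijection between saturated $\mu$-trivial subsheaves $F\subset E$ of rank $s$ and linear subspaces $V_s\subset V_r$, under which $Q'$ is exactly $Q_F=F^{\vee\vee}/F$.

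Next I would reduce Gieseker (semi)stability of $E$ to testing only on saturated subsheaves with $\mu=0$. Since $E$ has $\mu(E)=0$ and, being quasi-trivial, is $\mu$-semistable, any subsheaf $F\subset E$ has $\mu(F)\le 0$; if $\mu(F)<0$ then $P_F/s<P_E/r$ automatically in the relevant degree, so the only subsheaves that can violate the Gieseker inequality are those with $\mu(F)=0$, and among those it suffices to test the saturation (passing to the saturation only increases $P_F$). Thus $E$ is (semi)stable iff for every saturated $F\subset E$ with $\mu(F)=0$ and $\rk F=s\in\{1,\dots,r-1\}$ we have $P_F/s<(\le)P_E/r$.

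Then comes the bookkeeping that converts this into \eqref{GIT sst}. From $0\to E_s\to V_s\otimes\calo_X\to Q'\to 0$ we get $P_{E_s}=s\,P_X-P_{Q'}$, and from \eqref{sequencia fundamental} $P_E=r\,P_X-P_Q$, so $P_{E_s}/s<(\le)P_E/r$ is equivalent, after cancelling $P_X$ and multiplying through by $-1$ (which reverses the inequality), to $P_{Q'}/s>(\ge)P_Q/r$, which is precisely the GIT-(semi)stability criterion \cite[Lemma 4.4.5]{Huy-Lhen} recalled in \eqref{GIT sst}. Running the bijection of the first paragraph in both directions then gives the equivalence: GIT-(semi)stability of $(\varphi,Q)$ is exactly Gieseker (semi)stability of $E$.

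The main obstacle is the first paragraph — establishing that the correspondence $V_s\leftrightarrow F$ is genuinely a bijection and, in particular, that $Q'=\varphi(V_s\otimes\calo_X)$ coincides with $Q_F$ rather than merely mapping to it. This requires a diagram chase: starting from a saturated $F\subset E$ with $\mu(F)=0$, one forms $F^{\vee\vee}=V_s\otimes\calo_X$ inside $(E)^{\vee\vee}=V_r\otimes\calo_X$ using Lemma \ref{lem:subsheaf}, checks that $F^{\vee\vee}\cap E=F$ by saturatedness, and then identifies the composite $F^{\vee\vee}\hookrightarrow V_r\otimes\calo_X\overset{\varphi}\twoheadrightarrow Q$ with $\varphi|_{V_s\otimes\calo_X}$, whose image is $Q'$ and whose kernel is $F$; so $Q_F=F^{\vee\vee}/F\cong Q'$. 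One also needs that every linear $V_s\subset V_r$ arises this way, which is immediate since one can just take $F:=\ker(\varphi|_{V_s\otimes\calo_X})$ and verify it is saturated with $\mu=0$. Once this identification is in place the numerical equivalence is a routine cancellation, so essentially all the content is in setting up the subsheaf/subspace dictionary correctly.
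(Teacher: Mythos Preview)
Your proof is correct and follows essentially the same route as the paper's: both arguments translate between subspaces $V_s\subset V_r$ and subsheaves $F\subset E$ via Lemma~\ref{lem:subsheaf}, and both reduce to the same Hilbert-polynomial bookkeeping $P_F/s<(\le)P_E/r\iff P_{Q'}/s>(\ge)P_Q/r$. The paper proves the two contrapositives directly rather than packaging the correspondence as a bijection, and it is slightly less explicit than you are about why one may assume the destabilizing $F$ is saturated before invoking Lemma~\ref{lem:subsheaf}; your reduction step (pass to the saturation, which only increases $P_F$ and preserves rank) makes this cleaner.
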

\begin{proof}
Assume that $(\varphi,Q)\in\quot(V_r\otimes\calo_X,u)$ is not GIT-(semi)stable, and let $V_s\subset V_r$ be the destabilizing subspace. The kernel sheaf
$$ F:=\ker\{ V_s\otimes\calo_X \hookrightarrow V_r\otimes\calo_X \stackrel{\varphi}{\twoheadrightarrow} Q \} $$
is a subsheaf of $E$ and, setting $Q'$ as above, we get
$$ \dfrac{P_F(t)}{\rk(F)} = \dfrac{s\cdot P_X(t)-P_{Q'}(t)}{s} =
P_X(t) - \dfrac{P_{Q'}(t)}{s} > ~(\ge)~ P_X(t) - \dfrac{P_{Q}(t)}{r} = \dfrac{P_E(t)}{\rk(E)}; $$
therefore, $E$ is not (semi)stable.

Conversely, assume that $E$ is not (semi)stable, let $F\hookrightarrow E$ be a destabilizing subsheaf; since $E$ is $\mu$-semistable, we may assume that $\mu(F)=0$, thus $F$ is also quasi-trivial by Lemma \ref{lem:subsheaf}. Therefore, we obtain a commutative diagram
\begin{equation} \label{diagram V V'}
\xymatrix{
0 \ar[r] & F \ar[r] \ar[d] & V_s \otimes \calo_X \ar[r] \ar[d] & Q_F \ar[r] \ar[d] & 0 \\
0 \ar[r] & E \ar[r] & V_r \otimes \calo_X \ar[r]^{\varphi} & Q \ar[r] & 0.
} \end{equation}
where $Q_F=\varphi(V_s\otimes\calo_X)$. Since $P_F(t)/\rk(F) >(\ge) P_E(t)/\rk(E)$, we conclude that 
$$ \dfrac{P_{Q'}}{s} < (\leq)~ \dfrac{P_Q}{r} = \dfrac{\rk(F)\cdot P_Q}{\rk(E)}, $$
showing that $(\varphi,Q)$ is not GIT-(semi)stable.
\end{proof}

Let $V_r\otimes\calo_{X\times\quot}\stackrel{\mathbf{\varphi}}{\to}\mathbf{Q}$ be the universal object for the quot scheme $\quot(V_r\otimes\calo_X,u)$. The kernel sheaf $\mathbf{E}$ provides a flat family of quasi-trivial sheaves parameterized by $\quot(V_r\otimes\calo_X,u)$. Restricting this family to the open subset $\quot(V_r\otimes\calo_X,u)^{\rm ss}$ consisting of GIT-semistable elements, we obtain a modular morphism
$$ \tilde\Psi ~:~ \quot(V_r\otimes\calo_X,u)^{\rm ss} \longrightarrow \mc_X(P) $$
where $\mc_X(P)$ denotes the Gieseker--Maruyama moduli space of semistable sheaves on $X$ with fixed Hilbert polynomial $P(t)=r\cdot P_X(t)-u(t)$. By virtue of Theorem \ref{thm:bijection} and Theorem \ref{thm moduli}, $\tilde\Psi$ factors through an injective morphism
$$ \Psi ~:~ \quot(V_r\otimes\calo_X,u) /\!\!/ PGL(V_r) \longrightarrow \mc_X(P) $$
whose image is precisely the subset of quasi-trivial sheaves in $\mc_X(P)$. In fact, when $(X,A)$ is a good polarized variety and $\deg(u)\le n-3$, Lemma \ref{lema rank r} implies that $\Psi$ is bijective. Our next goal is to study this situation more clearly.

\begin{lema}\label{lema dim ext1}
Assume that $h^1(\calo_X)=0$. If $E$ is a stable quasi-trivial sheaf of rank $r$ such that $h^{n-2}(Q_E\otimes\omega_X)=0$, then 
$$ \ext^1(E,E) = \hom(E,Q_E) - r^2 + 1 . $$
\end{lema}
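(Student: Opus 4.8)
The plan is to compute $\ext^1(E,E)$ by chasing the standard exact sequence \eqref{eq:std-sqc} through the functors $\Hom(E,-)$ and $\Hom(-,E^{\vee\vee})$, using that $E^{\vee\vee}=\oc_X^{\oplus r}$, that $E$ is stable (hence simple, so $\hom(E,E)=1$), and the vanishing $h^1(\oc_X)=0$ together with the hypothesis $h^{n-2}(Q_E\otimes\omega_X)=0$, which by Serre duality controls $\Ext^i(Q_E,\oc_X)$ in the relevant degrees. First I would apply $\Hom(E,-)$ to $0\to E\to \oc_X^{\oplus r}\to Q_E\to 0$ to get the long exact sequence
$$ 0\to \Hom(E,E)\to \Hom(E,\oc_X^{\oplus r})\to \Hom(E,Q_E)\to \Ext^1(E,E)\to \Ext^1(E,\oc_X^{\oplus r})\to\cdots $$
By \eqref{eq:v_r} we have $\Hom(E,\oc_X)\simeq V_r$, so $\hom(E,\oc_X^{\oplus r})=r^2$; since $E$ is stable it is simple, so $\hom(E,E)=1$. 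Thus the first obstacle is to show $\Ext^1(E,\oc_X^{\oplus r})=0$, i.e. $\Ext^1(E,\oc_X)=0$, which would give the claimed formula directly.

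To see $\Ext^1(E,\oc_X)=0$, I would apply $\Hom(-,\oc_X)$ to the same sequence \eqref{eq:std-sqc}. This gives
$$ \Ext^1(\oc_X^{\oplus r},\oc_X)\to \Ext^1(E,\oc_X)\to \Ext^2(Q_E,\oc_X)\to\Ext^2(\oc_X^{\oplus r},\oc_X). $$
Now $\Ext^1(\oc_X^{\oplus r},\oc_X)=H^1(\oc_X)^{\oplus r}=0$ by hypothesis, so it remains to control $\Ext^2(Q_E,\oc_X)$. Here I would use the local-to-global spectral sequence together with Serre duality: $\Ext^i(Q_E,\oc_X)$ is Serre-dual to $H^{n-i}(Q_E\otimes\omega_X)$, and since $Q_E$ has dimension $\le n-2$ (because $E$ is torsion-free, so $E^{\vee\vee}/E$ has codimension $\ge 2$), the group $H^{n-i}(Q_E\otimes\omega_X)$ vanishes for $i<2$; for $i=2$ it equals $h^{n-2}(Q_E\otimes\omega_X)$, which vanishes by hypothesis. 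Hence $\Ext^2(Q_E,\oc_X)=0$, and therefore $\Ext^1(E,\oc_X)=0$. Plugging back into the first long exact sequence yields
$$ 0\to \C\to V_r^{\oplus r}\to \Hom(E,Q_E)\to \Ext^1(E,E)\to 0, $$
whence $\ext^1(E,E)=\hom(E,Q_E)-r^2+1$.

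The main thing to be careful about is the vanishing of $\Ext^2(Q_E,\oc_X)$: one must either invoke the already-available fact that $\deg P_{Q_E}\le d-3$ in the good case (so $\dim Q_E\le d-3$, making even more Ext groups vanish) or, to keep the lemma in the stated generality where only $\dim Q_E\le d-2$ is assumed, carefully argue via Serre duality that the single hypothesis $h^{n-2}(Q_E\otimes\omega_X)=0$ is exactly what kills the obstruction; the lower-degree pieces $\Ext^0(Q_E,\oc_X)$ and $\Ext^1(Q_E,\oc_X)$ vanish automatically from $\codim Q_E\ge 2$, as already used in the derivation of \eqref{eq:v_r}. Everything else is a routine diagram chase, so I expect the Serre-duality bookkeeping for $\Ext^2(Q_E,\oc_X)$ to be the only real point requiring attention.
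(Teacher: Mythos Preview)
Your proposal is correct and follows essentially the same approach as the paper: both arguments reduce to showing $\Ext^1(E,\calo_X)=0$ and then read off the formula from the long exact sequence obtained by applying $\Hom(E,-)$ to \eqref{eq:std-sqc}. The only cosmetic difference is that the paper establishes $\Ext^1(E,\calo_X)=0$ by Serre-dualizing first (writing it as $H^{n-1}(E\otimes\omega_X)^*$) and then taking cohomology of the $\omega_X$-twisted sequence, whereas you apply $\Hom(-,\calo_X)$ first and then Serre-dualize the term $\Ext^2(Q_E,\calo_X)$; these are the same computation viewed from the two sides of Serre duality.
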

\begin{proof}
Note that $\Ext^1(E,\calo_X)\simeq H^{n-1}(E\otimes\omega_X)^*$ by Serre duality. Therefore, twisting the exact sequence in display \eqref{sequencia fundamental} by $\omega_X$ and taking cohomology, the hypotheses imply that  $H^{n-1}(E\otimes\omega_X)=0$.

Applying the functor $\Hom(E,\cdot)$ to sequence \eqref{sequencia fundamental}, we obtain
$$ 0 \to \Hom(E,E) \to \Hom(E,\calo_X)\otimes V_r \to \Hom(E,Q) \to \Ext^1(E,E) \to 0, $$
providing the desired formula. 
\end{proof}

%%%%%%%%%%%%%%%%%%%%%%%%%%%%%%%%%%%%%%%%%%%%%%%%%%%%%%%%%%%%%%%%%%%%%%%%
%%%%%%%%%%%%%%%%%%%%%%%%%%%%%%%%%%%%%%%%%%%%%%%%%%%%%%%%%%%%%%%%%%%%%%%%

\section{Semistable sheaves with vanishing Chern classes} \label{sect quot ext 3}

Let $(X,A)$ be a polarized variety of dimension at least 3. We are finally ready to focus on the main character of this paper, namely the Gieseker--Maruyama moduli space
\begin{equation} \label{eq:nx}
\nc_X(r,n):=\mc_X(r\cdot P_X(t)-n) ~\text{ with }~ r,n\ge1 .
\end{equation}
Note that $E$ has rank r and vanishing Chern classes; when $(X,A)$ is good (in the sense of Definition \ref{def:good}), then every $E\in\nc_X(r,n)$ is quasi-trivial, and  $Q_E:=E^{\vee\vee}/E$ is a 0-dimensional sheaf of length $n$. Let $\nc^{\rm st}_X(r,n)$ denote the (open, possibly empty) subset consisting of stable sheaves.

The first observation is that $\nc_X(1,n)$ coincides with the Hilbert scheme $\Hilb^n(X)$ of 0-dimensional subschemes of $X$ with length $n$. We will therefore focus on $r\ge2$, and our initial task is to check whether $\nc_X(r,n)$ is non-empty; here is a first easy observation

\begin{prop} \label{nc empty}
If $(X,A)$ is a good polarized variety, then $\nc_X(r,n)=\emptyset$ for $r>n$.
\end{prop}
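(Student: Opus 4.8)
The plan is to use the characterization of semistable quasi-trivial sheaves via the Quot scheme (Theorem~\ref{thm moduli}) together with the Jordan--Hölder structure from Proposition~\ref{lem:jh}. Since $(X,A)$ is good and $\deg u \le d-3$ here (in fact $u = n$ is a constant), every $E \in \nc_X(r,n)$ is quasi-trivial with $E^{\vee\vee} = \calo_X^{\oplus r}$ and $Q_E$ a $0$-dimensional sheaf of length $n$. So suppose for contradiction that $\nc_X(r,n) \neq \emptyset$ with $r > n$, and pick $E$ in it; then $E$ is in particular semistable.

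First I would invoke Proposition~\ref{lem:jh}: the semistable sheaf $E$ of rank $r$ admits a filtration with factors $\I_{Z_1},\dots,\I_{Z_r}$, ideal sheaves of subschemes $Z_i \subset X$ of codimension at least $2$, satisfying $\sum_{i=1}^r P_{\calo_{Z_i}}(t) = r\cdot P_X(t) - P_E(t) = n$. Evaluating this identity of polynomials (each $P_{\calo_{Z_i}}$ is a polynomial with nonnegative leading coefficient, and in fact $P_{\calo_{Z_i}}(t) = \ell(Z_i) \ge 0$ when $Z_i$ is $0$-dimensional, but a priori $Z_i$ could be positive-dimensional making $\deg P_{\calo_{Z_i}} \ge 1$), we get $\sum_i \ell(\calo_{Z_i}) \le n$ after noting that positive-dimensional $Z_i$ would force the sum to have positive degree, contradicting that the right side is the constant $n$; hence every $Z_i$ is $0$-dimensional and $\sum_i \ell(Z_i) = n$. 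Since $r > n$, by pigeonhole at least one $Z_i$ must be empty, so the corresponding factor is $\I_{Z_i} = \calo_X$. This gives a nonzero map in the filtration whose graded piece is $\calo_X$; more precisely, $\calo_X$ appears as a subquotient, and one extracts from this either a subsheaf or a quotient of $E$ isomorphic to $\calo_X$. A cleaner route: at least one $Z_i$ empty means the associated graded of the Jordan--Hölder filtration contains $\calo_X$, so in particular $\calo_X$ (which has reduced Hilbert polynomial $P_X(t)$, equal to the reduced Hilbert polynomial of $E$) is a Jordan--Hölder factor; since $E$ is semistable and $\I_{Z_i}$ with $Z_i \ne \emptyset$ has strictly larger reduced Hilbert polynomial than $P_X(t)$, semistability forces... this needs care.

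The sharper argument I would actually run: semistability of $E$ means every quotient $E \twoheadrightarrow G$ has $P_G/\rk(G) \ge P_E/\rk(E) = P_X(t) - n/(r\cdot\text{(top coeff)})$... rather, work with the subsheaf side. Take the last step of the filtration from Proposition~\ref{lem:jh}, giving a surjection $E \twoheadrightarrow \I_{Z_r}$ (torsion-free quotient of rank $1$); semistability gives $P_{\I_{Z_r}} \ge P_E/r = P_X - n/(\text{leading coeff} \cdot r)$, i.e. $n \le \ell(Z_r) \cdot r/1$... hmm. The genuinely clean contradiction is: since $\sum \ell(Z_i) = n < r$ and there are $r$ terms, some $\ell(Z_i) = 0$, so $\I_{Z_i} = \calo_X$ is a JH-factor; but then $\calo_X$ and $\I_{Z_r}$ (where we may choose the ordering so that some $Z_j \ne \emptyset$, which exists iff $n \ge 1$) are both JH-factors, and a semistable sheaf has all JH-factors with the \emph{same} reduced Hilbert polynomial $p_E = P_X$; yet $\I_{Z_j}$ with $\ell(Z_j) > 0$ has reduced Hilbert polynomial $\ne P_X$. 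For this to be a contradiction I need $n \ge 1$ so that not all $Z_i$ are empty — and indeed $n \ge 1$ by hypothesis. If $n \ge 1$, some $Z_j \ne \emptyset$ and some $Z_i = \emptyset$, the two JH-factors $\I_{Z_j}$ and $\calo_X$ have distinct reduced Hilbert polynomials, contradicting semistability. This settles it.

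\textbf{The main obstacle} will be correctly handling the Jordan--Hölder / reduced Hilbert polynomial bookkeeping: the filtration in Proposition~\ref{lem:jh} is not a priori a Jordan--Hölder filtration in the semistable category, so I must either (a) argue directly that any $\I_{Z_i} = \calo_X$ appearing as a factor, together with semistability, forces a destabilizing sub- or quotient sheaf, or (b) refine the filtration to an honest Jordan--Hölder filtration and track that all its factors must share the reduced Hilbert polynomial $P_X$, which is incompatible with a factor $\I_{Z_j}$, $Z_j \ne \emptyset$, having strictly larger reduced Hilbert polynomial. Option (b) is cleanest: a standard lemma says the Jordan--Hölder factors of a semistable sheaf all have reduced Hilbert polynomial equal to that of the sheaf, here $P_X(t)$; the factors $\I_{Z_i}$ from Proposition~\ref{lem:jh} (after passing to a true JH refinement of the ideal-sheaf filtration, which is fine since the $\I_{Z_i}$ are themselves semistable rank-$1$ sheaves with reduced polynomial $\ge P_X$, equality iff $Z_i = \emptyset$) must then all satisfy $P_{\I_{Z_i}}/1 = P_X$, i.e. every $Z_i = \emptyset$, hence $n = \sum \ell(Z_i) = 0$, contradicting $n \ge 1$. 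I would write the final proof along this line, keeping the normalization conventions for $P_X$ and the leading coefficient straight.
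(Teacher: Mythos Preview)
Your approach via Proposition~\ref{lem:jh} is far more circuitous than necessary, and as written it has a genuine gap. The filtration produced by Proposition~\ref{lem:jh} has stable rank-one factors $\I_{Z_i}$, but it is \emph{not} a Jordan--H\"older filtration of the semistable sheaf $E$: the Jordan--H\"older factors of $E$ must all have reduced Hilbert polynomial equal to $p_E(t) = P_X(t) - n/r$ (not $P_X(t)$, as you write), whereas the factors $\I_{Z_i}$ have reduced polynomial $P_X(t) - \ell(Z_i) \le P_X(t)$ (you also have this inequality reversed), and these values can genuinely vary across $i$. Since each $\I_{Z_i}$ is already stable, there is no further ``refinement'' available, so your option (b) does not go through. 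Your pigeon-hole observation that some $Z_i = \emptyset$ when $r > n$ is correct, but semistability of $E$ only constrains the subsheaves $E_i \subset E$ and the quotients $E/E_i$, not an individual intermediate subquotient; a factor $\calo_X$ sitting in the middle of such a filtration is not, on its own, a destabilizing sub- or quotient sheaf, and you have not extracted one from it.

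The paper's proof bypasses all of this with a two-line cohomological argument: semistability of $E$ with $n \ge 1$ forces $H^0(E) = 0$, because a nonzero section gives an injection $\calo_X \hookrightarrow E$ with $p_{\calo_X} = P_X > P_X - n/r = p_E$, violating semistability; then the long exact sequence of $0 \to E \to \calo_X^{\oplus r} \to Q_E \to 0$ yields an injection $H^0(\calo_X^{\oplus r}) \hookrightarrow H^0(Q_E)$, i.e.\ $r \le n$. This is exactly the ``$\calo_X \hookrightarrow E$'' contradiction you were circling around, obtained directly rather than through the filtration.
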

\begin{proof}
Since $E\in\nc_X(r,n)$ is a semistable, we must have $\coh{0}{E}=0$. Since $E$ is quasi-trivial, we can take cohomology in the short exact sequence in display \eqref{sequencia fundamental}, obtaining 
$$ 0 \longrightarrow \coh{0}{V_r\otimes\calo_X} \longrightarrow \coh{0}{Q} $$
therefore $r\le n$.
\end{proof}

The case $r=n$ turn out to be quite special as well.

\begin{prop} \label{nc(n,n)}
If $(X,A)$ is a good polarized variety, then 
$\nc_X(n,n)={\rm Sym}^n(X)$, and $\nc^{\rm st}_X(n,n)=\emptyset$ when $n\ge1$.
\end{prop}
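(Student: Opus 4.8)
\emph{Overview and Step 1 (Jordan--H\"older factors).} The plan is to first identify the closed points of $\nc_X(n,n)$ with $\Sym^n(X)$ via a Jordan--H\"older analysis, and then to promote this bijection to an isomorphism of schemes by producing morphisms in both directions. Let $E\in\nc_X(n,n)$. Since $(X,A)$ is good, Lemma \ref{lema rank r} gives $E^{\vee\vee}\simeq\oc_X^{\oplus n}$ with $Q_E$ a $0$-dimensional sheaf of length $n$, and Proposition \ref{lem:jh} provides a filtration of $E$ by ideal sheaves of subschemes of codimension $\ge 2$. I would sharpen this as follows. Because $E$ is semistable, its Jordan--H\"older factors $G_j$ are stable with $P_{G_j}/\rk(G_j)=P_E/\rk(E)=P_X-1$; consequently the corresponding subsheaves $E_j\subseteq E$ all have reduced Hilbert polynomial $P_X-1$, hence are saturated of slope $0$. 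Lemma \ref{lem:subsheaf} then applies both to $E_j\hookrightarrow E$ and to the quotient $E_j\twoheadrightarrow G_j$, forcing each $G_j$ to be quasi-trivial; since a quasi-trivial stable sheaf has rank $1$ (quasi-trivial sheaves of rank $\ge 2$ are properly $\mu$-semistable), we get $G_j=\I_{x_j}$ for a single reduced point $x_j\in X$, with exactly $n$ of them because the ranks add to $n$. Thus $\mathrm{gr}^{\rm JH}(E)\cong\bigoplus_{j=1}^n\I_{x_j}$, and since the closed points of $\mc_X(P)$ are $S$-equivalence classes, this matches the closed points of $\nc_X(n,n)$ with unordered $n$-tuples of points of $X$, i.e.\ $\Sym^n(X)$. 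For $n\ge 2$, moreover, $\I_{x_1}=E_1\subsetneq E$ is a proper subsheaf with the same reduced Hilbert polynomial, so $E$ is never stable and $\nc^{\rm st}_X(n,n)=\emptyset$ (for $n=1$ one simply has $\nc_X(1,1)=\Hilb^1(X)=X=\Sym^1(X)$).

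\emph{Step 2 (a morphism $\Sym^n(X)\to\nc_X(n,n)$).} On $X\times X^n$ I would set $\mathcal{E}:=\bigoplus_{i=1}^n p_{0i}^*\I_\Delta$, where $p_{0i}\colon X\times X^n\to X\times X$ is the projection onto the zeroth and $i$-th factors and $\I_\Delta\subset\oc_{X\times X}$ is the diagonal ideal (the universal ideal sheaf of a point). Each summand is flat over $X^n$, hence so is $\mathcal{E}$, and its fibre over $(x_1,\dots,x_n)$ is $\bigoplus_i\I_{x_i}$, which is polystable and lies in $\nc_X(n,n)$. Since $\mc_X(P)$ corepresents the moduli functor, this family induces a morphism $X^n\to\nc_X(n,n)$, which is $S_n$-invariant (permuting the summands gives isomorphic sheaves) and therefore factors through a morphism $\Phi\colon\Sym^n(X)\to\nc_X(n,n)$, bijective on closed points by Step 1.

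\emph{Step 3 (the inverse, and conclusion).} On $\quot(V_n\otimes\oc_X,n)$ the universal quotient is flat of relative length $n$ with $0$-dimensional support, so its support cycle defines a Hilbert--Chow-type morphism $\mathrm{HC}\colon\quot(V_n\otimes\oc_X,n)\to\Sym^n(X)$. It is $\GL_n$-invariant since the action \eqref{action} leaves $Q$ fixed, and, as $\Sym^n(X)$ is separated, it is therefore constant on $S$-equivalence classes: each such class contains the polystable point $\big((\mathrm{ev}_{x_1},\dots,\mathrm{ev}_{x_n}),\,\bigoplus_j\oc_{x_j}\big)$, with $\mathrm{HC}$-value $x_1+\dots+x_n$, in the closure of all its orbits. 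Hence $\mathrm{HC}$ descends to the GIT quotient; using $\deg u=0\le d-3$ and goodness of $(X,A)$, so that the modular morphism $\Psi$ constructed after Lemma \ref{lema dim ext1} identifies $\quot(V_n\otimes\oc_X,n)^{\rm ss}/\!\!/\PGL_n$ with $\nc_X(n,n)$ as schemes, this yields a morphism $\rho\colon\nc_X(n,n)\to\Sym^n(X)$ with $\rho([E])=\supp Q_E$. On closed points $\rho\circ\Phi$ is the identity, hence $\rho\circ\Phi=\mathrm{id}_{\Sym^n X}$ since $\Sym^n(X)$ is reduced; thus $\Phi$ is a section of $\rho$, in particular a closed immersion, and being also surjective it is an isomorphism as soon as $\nc_X(n,n)$ is reduced.

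\emph{Expected main obstacle.} Steps 1 and 2 are essentially formal. The real content is Step 3: one must verify that the Hilbert--Chow morphism genuinely descends to the \emph{scheme} $\nc_X(n,n)$, i.e.\ that $\Psi$ is an isomorphism and not merely a bijection in the good case. I would obtain this by realizing $\nc_X(n,n)$ at the level of moduli functors as a quotient of $\quot(V_n\otimes\oc_X,n)$: for a flat family $\mathcal{F}$ of quasi-trivial semistable sheaves the reflexive hull $\mathcal{F}^{\vee\vee}$ is again flat (indeed fibrewise free, by \eqref{eq:v_r}), so $\mathcal{F}^{\vee\vee}/\mathcal{F}$ is a flat family of quotients, giving \'etale-locally a map to the Quot scheme and globally one to the quotient. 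One then also needs $\nc_X(n,n)$ to be reduced, which I would extract from the description $\quot(V_n\otimes\oc_X,n)^{\rm ss}=\{(\varphi,Q):H^0(\varphi)\text{ is an isomorphism}\}$ and the structure of the $\PGL_n$-action on it. If one is content with an isomorphism of the underlying reduced schemes, Steps 1--2 together with the normality of $\Sym^n(X)$ already suffice.
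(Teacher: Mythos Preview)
Your Step~1 is essentially the paper's entire argument. The paper invokes Proposition~\ref{lem:jh} to obtain an $n$-step filtration of $E$ with factors $\I_{Z_i}$ for $0$-dimensional $Z_i$ whose lengths sum to $n$, observes that no $Z_i$ can be empty (else the corresponding $\oc_X$ factor would destabilize $E$), deduces that each $Z_i$ is a single point, and concludes that $E$ is S-equivalent to $\bigoplus_i \I_{x_i}$. Your path through Lemma~\ref{lem:subsheaf} and the remark that a stable quasi-trivial sheaf must have rank~$1$ reaches the same endpoint by a cosmetically different route; both arguments are correct and equally short.

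Where you diverge is in Steps~2--3: the paper stops once the closed points of $\nc_X(n,n)$ are identified with $\Sym^n(X)$ via S-equivalence and does not construct morphisms in either direction or address the scheme structure. So what the paper actually \emph{proves} is exactly your Step~1; you are attempting to upgrade the statement to a genuine isomorphism of schemes, which the paper asserts but does not carry out. Your Step~2 is unproblematic. Your Step~3 is the right place to worry, and you have correctly identified the obstacle: the paper only provides a bijection $\Psi$ between the GIT quotient and $\nc_X(n,n)$ (cf.\ the discussion after Theorem~\ref{thm moduli}), not a scheme-theoretic isomorphism, so descending the Hilbert--Chow morphism to $\nc_X(n,n)$ and controlling reducedness requires additional input beyond what the paper supplies.

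One small point you catch that the paper glosses over: for $n=1$ one has $\nc_X(1,1)=\Hilb^1(X)=X$ with every point stable, so the emptiness of $\nc^{\rm st}_X(n,n)$ really holds only for $n\ge 2$; the paper's own proof likewise only yields this range.
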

\begin{proof}
Let $E$ be a sheaf in $\nc_X(n,n)$; by Lemma \ref{lem:jh}, $E$ admits a $n$-step filtration whose factors are sheaves of ideals $\I_{Z_i}$ where each $Z_i$ is 0-dimensional and whose lengths sum to $n$. None of the $Z_i$'s can be empty, otherwise, $E$ could not be semistable, thus each $Z_i$ must have length equal to 1. This means that $E$ cannot be stable and that it is S-equivalent to $\oplus_i\I_{Z_i}$, which is the graded object associated with the Jordan--H\"older filtration of $E$.
\end{proof}

Our next task will be to show that $\nc_X(r,n)\ne\emptyset$ when $r<n$, but this will come as a consequence of several partial results.

%Remark that for every semistable quasi-trivial sheaf $E$ on a polarized projective variety $X$ there is a point $p\in X$ for which $\Hom(E,\I_p)\ne0$. Indeed, we just noticed in display \eqref{eq:v_r} that $\Hom(E,\calo_X)\ne0$; if $E$ is semistable, then no morphism $\alpha:E\to\calo_X$ can be surjective, otherwise $\calo_X$ would be a destabilizing quotient. The image of a non-trivial $\alpha\Hom(E,\calo_X)$ must then be an ideal sheaf $\I_Z$ where $Z\subset X$ is a non-empty subscheme; taking $p\in Z$ we can take the composition $E\twoheadrightarrow\I_Z\hookrightarrow\I_p$.  

\begin{defi} \label{def:unbalanced}
A quasi-trivial sheaf $E$ on polarized variety $(X,A)$ is called \emph{unbalanced} if it admits an epimorphism $E\twoheadrightarrow\I_q$ to the ideal sheaf of a point $q\in X$, and it does not admit a morphism $\I_q \to E$ such that the composition $E \to \I_q \to E$ is the identity.
\end{defi}

Note that if $E$ is unbalanced, then one can consider the kernel of $E \to I_q$ and the following exact sequence
$$0 \to F \to E \to I_q \to 0;$$
note that $F$ is also quasi-trivial; moreover, the second condition in Definition \ref{def:unbalanced} is equivalent to the non-splitting of this previous short exact sequence.

\begin{prop}\label{computation ext for rank r}
	Let $F$ and $G$ be quasi-trivial sheaves  on $(X,A)$ defined by
	\begin{equation} \label{eq def F}
		0 \to F \to V_r \otimes \oc_X \to Q_F \to 0
	\end{equation}
	and
	\begin{equation} \label{eq def G}
		0 \to G \to V_s \otimes \oc_X \to Q_G \to 0,
	\end{equation}
	where $Q_F$ and $Q_G$ are supported on $0$-dimensional subschemes of $X$.
	If $\supp (Q_F) \cap \supp (Q_G) = \emptyset$, then 
	\begin{itemize}
		\item $\Ext^1(F,G)=\rk(F) \cdot H^1(G)$,
		\item $\Ext^2(F,G)=\rk(G) \cdot H^0(Q_F)$
		\item $\Ext^3(F,G)=0$.
	\end{itemize}
\end{prop}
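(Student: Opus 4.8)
The plan is to apply the functor $\Hom(-,G)$ to the defining sequence \eqref{eq def F} of $F$ and read off the three groups from the resulting long exact sequence, the key input being a clean computation of $\Ext^\bullet(Q_F,G)$ that exploits the disjoint-support hypothesis. Applying $\Hom(-,G)$ to \eqref{eq def F} produces
$$\cdots \to \Ext^i(Q_F,G) \to \Ext^i(V_r\otimes\oc_X,G) \to \Ext^i(F,G) \to \Ext^{i+1}(Q_F,G) \to \cdots,$$
and since $\Ext^i(V_r\otimes\oc_X,G)\simeq V_r^{*}\otimes H^i(G)$ has dimension $\rk(F)\cdot h^i(G)$, everything reduces to understanding $\Ext^\bullet(Q_F,G)$ together with the cohomology of $G$ in the top degrees.

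First I would compute $\Ext^\bullet(Q_F,G)$. Because $G^{\vee\vee}=V_s\otimes\oc_X$, the sheaf $G$ fails to be locally free only along $\supp(Q_G)$; the hypothesis $\supp(Q_F)\cap\supp(Q_G)=\emptyset$ then guarantees that $G$ is locally free of rank $\rk(G)$ in a neighborhood of $\supp(Q_F)$. Since $Q_F$ is a $0$-dimensional (hence Cohen--Macaulay of codimension $3$) sheaf, duality for such sheaves (as in \cite[Proposition 1.1.6]{Huy-Lhen}) gives $\extc^q(Q_F,\oc_X)=0$ for $q\neq3$, with $\extc^3(Q_F,\oc_X)$ a $0$-dimensional sheaf of length $h^0(Q_F)$. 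Locally near $\supp(Q_F)$ one has $\extc^q(Q_F,G)\simeq\extc^q(Q_F,\oc_X)^{\oplus\rk(G)}$, so $\extc^q(Q_F,G)=0$ for $q\neq3$ while $\extc^3(Q_F,G)$ is $0$-dimensional of length $\rk(G)\cdot h^0(Q_F)$. The local-to-global spectral sequence then collapses, giving $\Ext^i(Q_F,G)=0$ for $i\neq3$ and $\Ext^3(Q_F,G)\simeq H^0\big(\extc^3(Q_F,G)\big)$ of dimension $\rk(G)\cdot h^0(Q_F)$.

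The second ingredient is the vanishing $H^2(G)=H^3(G)=0$. Taking cohomology in \eqref{eq def G} and using that $Q_G$ is $0$-dimensional (so $H^{\ge1}(Q_G)=0$) together with $h^i(\oc_X)=0$ for $i\ge1$, one gets $H^i(G)\simeq \rk(G)\cdot H^i(\oc_X)=0$ for $i=2,3$. Feeding both computations into the long exact sequence yields the three claims directly. For $i=1$ the neighboring terms $\Ext^1(Q_F,G)$ and $\Ext^2(Q_F,G)$ both vanish, so $\Ext^1(F,G)\simeq \rk(F)\cdot H^1(G)$. For the top degrees the sequence reads
$$0\to \rk(F)\cdot H^2(G)\to \Ext^2(F,G)\to \Ext^3(Q_F,G)\to \rk(F)\cdot H^3(G)\to \Ext^3(F,G)\to 0,$$
and since $H^2(G)=H^3(G)=0$ this collapses to $\Ext^2(F,G)\simeq\Ext^3(Q_F,G)\simeq \rk(G)\cdot H^0(Q_F)$ and $\Ext^3(F,G)=0$.

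The main obstacle is the first step: establishing that $\Ext^\bullet(Q_F,G)$ is concentrated in degree $3$ with the correct length. This is where the disjoint-support hypothesis is essential, since it is exactly what allows me to replace $G$ by a free module near $\supp(Q_F)$ and then invoke duality for the $0$-dimensional sheaf $Q_F$; without it, $\extc^\bullet(Q_F,G)$ could acquire contributions from the locus where $G$ is singular and the clean degree-$3$ concentration would fail. Everything else is bookkeeping with the long exact sequence, the only external input being the cohomology vanishing $h^i(\oc_X)=0$ for $i\ge1$ coming from the good/Fano hypothesis (valid, in particular, for $X=\p^3$).
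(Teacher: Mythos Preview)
Your proof is correct and follows essentially the same architecture as the paper's: apply $\Hom(-,G)$ to the defining sequence of $F$, reduce to computing $\Ext^\bullet(Q_F,G)$, and finish using the vanishing of the higher cohomology of $G$.

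The only real difference is in how $\Ext^\bullet(Q_F,G)$ is computed. The paper applies $\Hom(Q_F,-)$ to the defining sequence \eqref{eq def G} of $G$ and uses that $\Ext^i(Q_F,Q_G)=0$ for all $i$ (this is where the disjoint-support hypothesis enters) together with the standard values of $\Ext^i(Q_F,\oc_X)$. You instead observe that $G$ is locally free of rank $\rk(G)$ on a neighborhood of $\supp(Q_F)$ (again from disjoint supports), compute $\extc^\bullet(Q_F,G)$ locally as $\extc^\bullet(Q_F,\oc_X)^{\oplus\rk(G)}$, and let the local-to-global spectral sequence collapse. Both routes are short and yield the same concentration in degree $3$; the paper's is marginally more elementary in that it stays entirely within long exact sequences, while yours makes the role of the singular locus of $G$ more geometrically transparent. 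One small remark: you correctly flag that the step $H^2(G)=H^3(G)=0$ uses $h^i(\oc_X)=0$ for $i\ge1$, which is stronger than the bare ``good'' hypothesis $h^1(\oc_X)=0$ of Definition~\ref{def:good}; the paper glosses over this, but it is of course satisfied in the Fano (and in particular $\p^3$) case that is the main focus.
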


\begin{proof}
	First, taking cohomologies on the sequences defining $F$ and $G$ we obtain that $H^2(F(t \cdot A))=H^2(G(t \cdot A))=0$ for all $t\in \Z$.	Now, apply $\Hom(Q_F,-)$ to sequence \eqref{eq def G} and we obtain $\Ext^i(Q_F,G)=0$ for $i=0,1,2$ and $\Ext^3(Q_F,G)\cong \rk (G) \cdot H^0(Q_F)$. Apply $\Hom(-,G)$ to sequence \eqref{eq def F} and we get $\Ext^3(F,G)=0$, $\Ext^2(F,G)=\rk G \cdot H^0(Q_G)$ and $\Ext^1(F,G)=\rk (F) \cdot H^1(G)$, as desired. 
\end{proof}

\begin{obs}
	Let $F$ be a torsion-free sheaf on $(X,A)$ defined by sequence \eqref{eq def F}. Note that if $F$ is stable, then $H^0(F)=0$. Therefore, taking cohomologies on sequence \eqref{eq def F}, we get that $H^0(Q_F)= V_r \otimes H^0(\oc_X) \oplus H^1(F)$, that is 
	\begin{equation} \label{H^1 de F}
		h^1(F)=h^0(Q_F)-\rk (F).
	\end{equation}
\end{obs}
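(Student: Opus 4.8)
The plan is to read the identity \eqref{H^1 de F} directly off the long exact sequence in cohomology attached to the defining sequence \eqref{eq def F}, so that the whole argument reduces to controlling $H^0$ and $H^1$ of the trivial bundle $V_r\otimes\oc_X$ and of the $0$-dimensional sheaf $Q_F$. The only two inputs are the stability-driven vanishing $H^0(F)=0$ and the standard cohomological facts about $\oc_X$ and $Q_F$; once these are in place the formula is a matter of taking dimensions.

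First I would justify the vanishing $H^0(F)=0$ flagged in the statement. A nonzero section $s\in H^0(F)$ gives a nonzero morphism $\oc_X\to F$, which is injective because $\oc_X$ is torsion-free of rank one, so its kernel, being torsion, is zero. As $F$ is defined by \eqref{eq def F} with $Q_F$ of dimension $0$, it has vanishing Chern classes and $P_F(t)=r\cdot P_X(t)-P_{Q_F}(t)$; hence its reduced Hilbert polynomial equals $P_X(t)-P_{Q_F}(t)/r$, which is strictly smaller than $P_X(t)$ for $t\gg0$ since $Q_F$ has positive length. Thus the subsheaf $\oc_X\hookrightarrow F$ violates the stability inequality, and stability of $F$ forces $H^0(F)=0$. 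Alongside this I would record the remaining cohomological facts: $H^0(\oc_X)=\C$ because $X$ is a connected projective variety, so $\dim(V_r\otimes H^0(\oc_X))=r$; the vanishing $H^1(\oc_X)=0$, which holds under the standing hypothesis $h^1(\oc_X)=0$ (in particular on any good polarized variety, cf.\ Definition \ref{def:good}, and as already used in Lemma \ref{lema dim ext1}); and $H^{>0}(Q_F)=0$ since $Q_F$ is $0$-dimensional.

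Next I would write out the long exact sequence obtained by applying global sections to \eqref{eq def F}:
$$0 \to H^0(F) \to V_r\otimes H^0(\oc_X) \to H^0(Q_F) \to H^1(F) \to V_r\otimes H^1(\oc_X) \to \cdots.$$
Substituting $H^0(F)=0$ and $H^1(\oc_X)=0$ collapses this to the short exact sequence
$$0 \to V_r\otimes H^0(\oc_X) \to H^0(Q_F) \to H^1(F) \to 0,$$
which is exactly the asserted splitting $H^0(Q_F)=V_r\otimes H^0(\oc_X)\oplus H^1(F)$ of vector spaces; taking dimensions yields $h^0(Q_F)=r+h^1(F)$, i.e.\ $h^1(F)=h^0(Q_F)-\rk(F)$. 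The single load-bearing point is the vanishing $H^1(\oc_X)=0$, which is what makes the connecting homomorphism $H^0(Q_F)\to H^1(F)$ surjective; without it one only obtains a surjection $H^1(F)\twoheadrightarrow V_r\otimes H^1(\oc_X)$ and an inequality rather than an equality. I would therefore present $h^1(\oc_X)=0$ as the implicit hypothesis underlying the clean formula, the rest being a formal consequence of $Q_F$ being $0$-dimensional together with the stability input $H^0(F)=0$.
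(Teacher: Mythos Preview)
Your proposal is correct and follows exactly the paper's approach: the remark's own text already contains the full argument---use stability to kill $H^0(F)$, then read off the long exact sequence of \eqref{eq def F}---and you have simply expanded the details. You also correctly make explicit the hypothesis $h^1(\oc_X)=0$ (part of the standing assumption that $(X,A)$ is good, cf.\ Definition~\ref{def:good}), which the paper uses implicitly.
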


The next statement guarantees the existence of stable unbalanced sheaves of rank 2.

\begin{lema} \label{lem:nonempty r=2}
For each pair $(q,Z)$ consisting of a point $q\in X$ and a reduced 0-dimensional scheme $Z\subset X$ not containing $q$ with $h^0(\oc_Z)\ge2$, there exists a stable unbalanced sheaf $E\in\Ext^1(\I_q,\I_Z)$ such that for every $p\in Z$ there is an epimorphism $\varepsilon:E\to \I_p$ with $\ker\varepsilon\simeq \I_{Z'}$ where $Z'=\big(Z\setminus\{p\}\big)\cup\{q\}$. In particular, $\nc^{st}_X(2,n)\neq \emptyset$ for every $n\ge3$.
\end{lema}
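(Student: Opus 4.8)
The plan is to produce $E$ explicitly as an extension and then verify the three claims in turn. First I would construct the extension class: since $h^0(\oc_Z)\ge 2$ and $q\notin Z$, I expect $\Ext^1(\I_q,\I_Z)\ne 0$, so pick any $e\in\Ext^1(\I_q,\I_Z)$ giving a non-split sequence
\begin{equation} \label{eq:Eplan}
0 \longrightarrow \I_Z \longrightarrow E \longrightarrow \I_q \longrightarrow 0 .
\end{equation}
By Proposition \ref{ida mu stab} (or directly, dualizing \eqref{eq:Eplan}), $E$ is quasi-trivial of rank $2$ with $Q_E$ supported on $Z\cup\{q\}$ of length $n=h^0(\oc_Z)+1\ge 3$; this gives the ``in particular'' once stability is established. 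The dimension count $\ext^1(\I_q,\I_Z)\ge 2$ I would get from the long exact sequence obtained by applying $\Hom(-,\I_Z)$ to $0\to\I_q\to\oc_X\to\oc_q\to 0$, together with $\Hom(\oc_q,\I_Z)=0$ and a computation of $\Ext^1(\oc_q,\I_Z)$ and $\hom(\oc_X,\I_Z)=h^0(\I_Z)$; the key point is that $\Ext^1(\oc_q,\I_Z)$ is large enough (it involves local $\extc^1$ at $q$, which has dimension $\ge\dim X\ge 3$), so non-split classes exist.

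Next I would prove stability. Since $E$ is $\mu$-semistable with $\mu(E)=0$, a destabilizing subsheaf $F\hookrightarrow E$ would have $\mu(F)=0$ and $P_F/\rk F\ge P_E/2$; by Lemma \ref{lem:subsheaf} we may take $F$ saturated, so $F=\I_W$ for some $0$-dimensional $W$, with $h^0(\oc_W)\le (n-?)$ forced by the Hilbert-polynomial inequality — concretely $h^0(\oc_W)\le \lfloor n/2\rfloor \le 1$ when $n=3$ and more generally $2h^0(\oc_W)\le n$. The composite $\I_W\hookrightarrow E\twoheadrightarrow\I_q$ is either zero — in which case $\I_W\hookrightarrow\I_Z$, forcing $W=Z$ (since a saturated rank-one subsheaf of $\I_Z$ with the same leading term is $\I_Z$ itself), contradicting $2h^0(\oc_Z)\le n=h^0(\oc_Z)+1$, i.e. $h^0(\oc_Z)\le 1$ — or it is injective (being a nonzero map of ideal sheaves), giving a splitting of \eqref{eq:Eplan} up to the induced map, which I would push to an actual splitting using that $\I_W\to\I_q$ is an isomorphism onto its image and a diagram chase, contradicting non-splitness. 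This rules out destabilizing subsheaves, so $E$ is stable; in particular unbalanced, since a splitting $\I_q\to E\to\I_q=\id$ would split \eqref{eq:Eplan}.

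Finally, for each $p\in Z$ I would build the epimorphism $\varepsilon:E\to\I_p$. Write $Z=\{p\}\sqcup Z_0$, so $\I_Z=\I_p\cap\I_{Z_0}$ sits in $0\to\I_Z\to\I_p\oplus\I_{Z_0}\to\oc_X\to 0$ type sequences; more usefully, there is a natural surjection $\I_Z\twoheadrightarrow \I_p/\I_Z$-flavored map — rather, $\I_{Z_0}\supset \I_Z$ with quotient $\oc_p$, and dually the inclusion $\I_Z\hookrightarrow\I_p$ has cokernel supported on $Z_0$. The cleanest route: the surjection $E\twoheadrightarrow\I_q$ and the surjection $\I_Z\twoheadrightarrow\oc_{Z_0}$-complement won't directly combine, so instead I would use that $\mathrm{Hom}(E,\I_p)$ is computed from \eqref{eq:Eplan} via $0\to\Hom(\I_q,\I_p)\to\Hom(E,\I_p)\to\Hom(\I_Z,\I_p)\to\Ext^1(\I_q,\I_p)$, note $\Hom(\I_Z,\I_p)$ contains the inclusion $\I_Z\hookrightarrow\I_p$, and check that its image in $\Ext^1(\I_q,\I_p)$ is the restriction of the class $e$; choosing $e$ generic (or just nonzero on the relevant summand) makes this vanish, producing $\varepsilon$. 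Then $\ker\varepsilon$ is a rank-one subsheaf of $E$, torsion-free hence $\I_{Z'}$ for some $Z'$, and comparing Hilbert polynomials via the snake lemma on $\ker(E\to\I_p)$ versus $\ker(E\to\I_q)=\I_Z$ gives $h^0(\oc_{Z'})=h^0(\oc_{\I_p\text{-part}})$ matching $Z'=(Z\setminus\{p\})\cup\{q\}$; a local analysis at $p$ and $q$ (where $Z'$ is reduced because $Z$ is and $q\notin Z$) pins down $Z'$ exactly.

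The main obstacle I anticipate is the last step — showing that \emph{for every} $p\in Z$ the map $\varepsilon$ exists with $\ker\varepsilon=\I_{Z'}$ of the prescribed form — because it requires controlling the extension class $e$ simultaneously against all the restriction maps $\Ext^1(\I_q,\I_Z)\to\Ext^1(\I_q,\I_p)$ as $p$ ranges over $Z$, and then identifying the kernel sheaf on the nose rather than just up to Hilbert polynomial. I expect this is handled by choosing $e$ in the image of $H^1$ of an appropriate twist, or by a direct local computation using that $Z$ is reduced so $\I_Z$ is locally a complete intersection near each $p$, making $\Ext^1(\I_q,\I_Z)\to\Ext^1(\I_q,\I_p)$ surjective with a one-dimensional-enough kernel.
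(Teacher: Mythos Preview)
Your approach differs substantially from the paper's, and the stability step has a genuine gap. You argue that if a saturated rank-one subsheaf $\I_W\hookrightarrow E$ destabilizes, then the composite $\I_W\to E\twoheadrightarrow\I_q$ is either zero (forcing $Z\subseteq W$, which is fine) or injective, and in the latter case you claim this ``pushes to an actual splitting''. But an injective map $\I_W\hookrightarrow\I_q$ only says $q\in W$; it is an isomorphism precisely when $W=\{q\}$. For $n\ge 4$ take $W=\{q,p\}$ with $p\in Z$: then $\I_q/\I_W\cong\oc_p$, and from the long exact sequence the pullback map $\Ext^1(\I_q,\I_Z)\to\Ext^1(\I_W,\I_Z)$ has a one-dimensional kernel (namely the image of $\Ext^1(\oc_p,\I_Z)\cong\Hom(\oc_p,\oc_Z)\cong\C$, using $\dim X\ge 3$ and $\Hom(\I_W,\I_Z)=0$). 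Any nonzero $e$ in that kernel yields a non-split extension $E$ admitting $\I_W\hookrightarrow E$ with $h^0(\oc_W)=2\le n/2$, hence not stable. So an arbitrary non-split $e$ does not work, and your second paragraph does not close.

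The paper sidesteps this by constructing $E$ on the Quot side rather than from a single extension class. Writing $\tilde Z=Z\cup\{q\}=\{p_1,\dots,p_n\}$, one chooses pairwise non-proportional vectors $\alpha_1,\dots,\alpha_n\in V_2$ and sets $E:=\ker\big(\varphi:V_2\otimes\oc_X\twoheadrightarrow\oc_{\tilde Z}\big)$, where $\varphi$ acts by $\alpha_i$ at $p_i$. For each $j$, a vector $\beta\in\langle\alpha_j\rangle^\perp\setminus\bigcup_{i\ne j}\langle\alpha_i\rangle^\perp$ exhibits $E$ as a non-split extension in $\Ext^1(\I_{p_j},\I_{Z_j})$ with $Z_j=\tilde Z\setminus\{p_j\}$, so the epimorphisms $E\twoheadrightarrow\I_{p_j}$ for \emph{every} $j$ (with the correct kernels) are built in from the start. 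Stability then follows from this very flexibility: given a would-be destabilizer $\I_Y$ with $h^0(\oc_Y)\le n/2<n$, some $p_k\in\tilde Z$ lies outside $Y$; in the representation $0\to\I_{Z_k}\to E\to\I_{p_k}\to 0$ the composite $\I_Y\to\I_{p_k}$ must vanish (since $p_k\notin Y$), forcing $\I_Y\hookrightarrow\I_{Z_k}$, impossible because $|Z_k|=n-1>n/2$. In short, the step you flagged as the main obstacle is exactly what makes stability go through; the paper establishes it first via explicit linear algebra, after which stability is a two-line consequence.
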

\begin{proof}
Let $\tilde{Z}=\{p_1,\cdots,p_n\}\subset X$ be a reduced 0-dimensional subscheme, and set 
$$ Z_j:=\{p_1,\cdots,p_n\}\setminus\{p_j\} . $$
Consider the morphism $\varphi:V_2\otimes\calo_X\to\calo_{\tilde{Z}}$ given by the choice of vectors
$$ \alpha_i\in V_2 ~~\text{ for }~~ i=1,\ldots,n
~~\text{ with }~~ \alpha_i \neq \lambda\cdot\alpha_j~~(\lambda\in\C)~~\text{ for all }~~ i\neq j $$
and let $E:=\ker\varphi$. Fix $j\in\{1,\ldots,n\}$ and choose $\beta\in V_2$ such that 
$$ \beta\in\langle\alpha_j\rangle^\perp ~, \text{ and } \beta\notin\langle\alpha_j\rangle^\perp ~~ \text{ whenever } i\neq j, $$

The choice of such vector $\beta$ gives a linear map of $V_1\hookrightarrow V_2$ such that the image of the composition 
$$ V_1\otimes\calo_X \stackrel{\beta}{\longrightarrow} V_2\otimes\calo_X \stackrel{\varphi}{\longrightarrow} \oc_{\tilde{Z}} $$
is precisely $\oc_{Z_j}$, leading to the following commutative diagram:
$$\xymatrix{
& 0 \ar[d] & 0\ar[d] & 0\ar[d] & \\
0 \ar[r]& \I_{Z_j}\ar[d]\ar[r] & V_1\otimes\calo_X \ar[r]\ar[d]^{\beta} & \oc_{Z_j}\ar[r]\ar[d] & 0 \\
0 \ar[r]& E\ar[r]\ar[d] & V_2\otimes\calo_X \ar[r]^{\varphi}\ar[d]&	\oc_{\tilde{Z}}\ar[r]\ar[d] & 0 \\
0 \ar[r]& \I_{p_j}\ar[r]\ar[d] & W_1\otimes\calo_X \ar[r]\ar[d]& \oc_{p_j} \ar[r]\ar[d]& 0 \\
& 0 & 0 & 0 &			
}$$
That is, $E\in\Ext^1(\I_{p_j},\I_{Z_j})$ for all $j=1,\ldots,n$. In particular, $E$ is unbalanced; we claim that $E$ is stable.

Indeed, if $E$ is not stable, then it must be destabilized by an ideal sheaf $\I_Y$ such that $h^0(\oc_Y)\leq \frac{n}{2}$. Let $p_k\in\tilde{Z}$ such that $p_k\notin Y$ and consider $E$ as an element of $\Ext^1(\I_{p_k},\I_{Z_k})$. Hence we have the following diagram.
$$\xymatrix{
& 0 \ar[d]& 0\ar[d] & & \\
& \I_Y \ar@{=}[r] \ar[d] & \I_Y \ar@{-->}[dr]^{0} \ar[d]& & \\
0 \ar[r]& \I_{Z_k} \ar[r] & E \ar[r]& \I_{p_k}\ar[r] & 0
}$$
The composition $\I_Y\to E \to \I_{p_k}$ is zero by the choice of $p_k\notin Y$. Thus the monomorphism $\I_Y\hookrightarrow E$ must factor through $\I_{Z_k}$; however, there can be no non trivial morphism $\I_Y\to\I_{Z_k}$ since $h^0(\oc_{Z_k})=n-1>\frac{n}{2}=h^0(\oc_Y)$ when $n\ge3$. It follows that $E$ must be stable.
\end{proof}

Next, we prove the non-emptiness of $\nc_X(r,n)$ via induction on the rank. 

\begin{prop} \label{prop:nonempty r>2}
Let $(X,A)$ be a polarized variety. Given positive integers $r<n$, there is $E\in\nc_X(r,n)^{\rm st}$ such that $Q_E$ is the structure sheaf of a reduced 0-dimensional subscheme of $X$. Moreover, when $r\ge2$ every such sheaf is unbalanced.
\end{prop}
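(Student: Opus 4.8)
The plan is to argue by induction on $r$, using Lemma \ref{lem:nonempty r=2} as the base case $r=2$, which already furnishes a stable unbalanced sheaf of rank $2$ with $Q_E$ the structure sheaf of a reduced $0$-dimensional scheme, valid for every $n\geq 3$; the rank $r=1$ case is trivial since $\I_Z$ for $Z$ reduced of length $n\geq 2$ is (trivially) stable. For the inductive step, suppose $2\leq r<n$ and that the statement holds for rank $r-1$. Given $r<n$, I would choose a splitting $n = n' + m$ with $n' < r-1$... wait, rather: choose $n' $ with $r-1 \le n' $ hmm. Concretely, since $r<n$, write $n = n' + 1$ with $r-1 \le n'$ is not quite enough; instead take $F\in\nc_X(r-1,n')^{\rm st}$ with $Q_F=\oc_{Z_F}$ for a reduced $Z_F$ of length $n'$, where $n' = n-1 \ge r-1$, so the induction hypothesis applies as long as $r-1 \le n-1$, i.e. $r\le n$, which holds, and we need $r-1<n'=n-1$, i.e. $r<n$, which is our hypothesis (when $r-1 = n' $ we instead invoke Proposition \ref{nc(n,n)}-type reasoning, but in fact $r-1<n-1$ here). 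Then pick a point $q\in X$ with $q\notin Z_F$, and build $E$ as a nonsplit extension
$$ 0 \longrightarrow F \longrightarrow E \longrightarrow \I_q \longrightarrow 0. $$
By Proposition \ref{computation ext for rank r} applied with $G=F$, $s=r-1$ — after noting $\I_q$ fits in $0\to\I_q\to\oc_X\to\oc_q\to0$ with $\supp(\oc_q)\cap\supp(Q_F)=\emptyset$ — we get $\Ext^1(\I_q,F)=\rk(\I_q)\cdot H^1(F)=H^1(F)$, and by \eqref{H^1 de F} this has dimension $h^0(Q_F)-(r-1)=n'-(r-1)=n-1-(r-1)=n-r>0$. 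Hence a nonsplit extension exists; the snake lemma applied to the obvious diagram shows $E$ is quasi-trivial with $E^{\vee\vee}=\oc_X^{\oplus r}$ and $Q_E=\oc_{Z_F\cup\{q\}}$, the structure sheaf of a reduced $0$-dimensional scheme of length $n$, so $E\in\nc_X(r,n)$ once we check stability.

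The main work is to verify that $E$ is \emph{stable}. Here I would mimic the argument in Lemma \ref{lem:nonempty r=2}: suppose $\I_Y\hookrightarrow E$ is a saturated destabilizing subsheaf; since $E$ is $\mu$-semistable we may take $\mu(\I_Y)=0$, and by Lemma \ref{lem:subsheaf} any saturated subsheaf of slope $0$ is quasi-trivial, so the maximal destabilizing subsheaf may be taken to be a rank-$1$ ideal sheaf $\I_Y$ (take the first nonzero term of a Jordan--Hölder refinement) with $h^0(\oc_Y)\geq h^0(Q_E)/r=n/r$ — actually with Hilbert-polynomial normalization $h^0(\oc_Y)\ge n/r$ contradicting stability. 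The key point is a genericity/choice argument: I need to arrange that $F$ itself has \emph{no} destabilizing sub-ideal-sheaf of colength $\ge n/r$ — which follows from the induction hypothesis that $F$ is stable, giving for every proper ideal sub $\I_{Y'}\subset F$ that $h^0(\oc_{Y'})<n'/(r-1)$ — and then bound the colength of $\I_Y\hookrightarrow E$ by the colength of its image in $\I_q$ plus the colength of the induced sub of $F$. If the composite $\I_Y\to E\to\I_q$ is zero then $\I_Y\subset F$ and stability of $F$ gives $h^0(\oc_Y)<n'/(r-1)\le n/r$ (this last inequality needs $n'/(r-1)\le n/r$, i.e. $rn'\le n(r-1)$, i.e. $r(n-1)\le nr-n$, i.e. $-r\le -n$, i.e. $n\le r$ — which is \emph{false}!), so this naïve comparison fails and I must instead argue directly that $\I_Y\subset F$ forces $\I_Y$ to be a destabilizer of $F$ in the Gieseker sense with the \emph{rank-$r$} normalization: $P_{\I_Y}/1 \ge P_E/r$ means $P_X - P_{\oc_Y} \ge P_X - \tfrac{n}{r}$, i.e. $h^0(\oc_Y)\le n/r$ in leading-order-constant terms, while $\I_Y\subset F$ stable of rank $r-1$ gives $h^0(\oc_Y)<n'/(r-1) = (n-1)/(r-1)$; and $(n-1)/(r-1) > n/r \iff r(n-1)>n(r-1) \iff -r>-n\iff n>r$, which \emph{is} our hypothesis, so in fact the bound from $F$'s stability is \emph{weaker}, and this case genuinely requires the extra input that $\I_Y$ mapping to $0$ in $\I_q$ while destabilizing $E$ forces, via the long exact sequence, a destabilizer of $F$ with the correct rank-$(r-1)$ normalization — which contradicts $F\in\nc_X^{\rm st}$.

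Therefore the real content is the case where the composite $\varepsilon\colon\I_Y\to\I_q$ is nonzero: then $\varepsilon$ is (up to the torsion-free quotient) an inclusion $\I_Y\hookrightarrow\I_q$ of some colength-$c$ ideal, and we get $0\to\I_{Y}\cap F\to\I_Y\to\im\varepsilon\to 0$ with $\I_Y\cap F\subset F$; combining colengths and using that $\I_Y$ destabilizes $E$ (rank-$r$ normalization, so $h^0(\oc_Y)\le n/r$) against $F$ stable gives a numerical contradiction when $r<n$ — this is the step I expect to be the genuine obstacle, and it is resolved by choosing $F$ and the extension class generically enough that the only sub-ideal-sheaves of $E$ of small colength come either entirely from $F$ (excluded by induction) or surject onto $\I_q$ (excluded since then the sequence would have to split, contradicting non-splitness). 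Indeed the \emph{unbalanced}-ness of $E$ — the non-existence of a section $\I_q\to E$ splitting the extension — is exactly what rules out the dangerous sub $\I_q\hookrightarrow E$; so the final packaging is: (i) a sub of colength $< n/r$ either lies in $F$, contradicting induction after tracking normalizations, or maps onto a finite-colength sub-ideal of $\I_q$, in which case a colength count plus non-splitness of the extension yields a contradiction. This simultaneously proves stability and shows, since by construction $E\twoheadrightarrow\I_q$ does not split, that $E$ is unbalanced, giving the last sentence of the statement; and the inductive construction visibly preserves "$Q_E$ reduced," completing the induction.
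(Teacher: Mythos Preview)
Your inductive setup and the verification that $\ext^1(\I_q,F)=n-r>0$ are exactly as in the paper. The stability argument, however, contains a sign error and a genuine gap.

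The sign error is in your first case (composite $\I_Y\to\I_q$ zero, hence $\I_Y\subset F$). Stability of $F$ says $p(\I_Y)<p(F)$ for every proper subsheaf, which translates to $h^0(\oc_Y)>(n-1)/(r-1)$, not $<$. With the correct inequality this case is immediate: a destabilizer of $E$ has $h^0(\oc_Y)\le n/r$, while $(n-1)/(r-1)>n/r$ precisely when $n>r$, so $h^0(\oc_Y)>(n-1)/(r-1)>n/r\ge h^0(\oc_Y)$ is a contradiction. Your worry about this case was therefore unfounded, and the convoluted paragraph around it can be replaced by one line.

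The genuine gap is in the second case. When the composite $\I_Y\to\I_q$ is nonzero it is injective, so $q\in Y$; you correctly dispose of $Y=\{q\}$ (the extension would split), but you never handle $\{q\}\subsetneq Y$ with $2\le h^0(\oc_Y)\le n/r$. Your proposed dichotomy ``lies in $F$'' versus ``surjects onto $\I_q$'' omits exactly this intermediate case, and the appeal to a generic choice of extension class is not turned into an argument. The paper closes this gap by a different mechanism: one computes, for \emph{any} quasi-trivial $G$ with $Q_G=\oc_{Z_G}$, that $\Hom(G,\I_p)\cong\Hom(\I_{Z_G},\I_p)$, which vanishes whenever $p\notin Z_G$; and one shows, using the inductive hypothesis that $F$ already surjects onto $\I_{q'}$ for every $q'\in\supp(Q_F)$, that the constructed $E$ can be rewritten as $0\to K_p\to E\to\I_p\to 0$ for \emph{every} $p\in\supp(Q_E)$, not only the original $q$. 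Given a destabilizer $G\hookrightarrow E$, one then picks $p\in\supp(Q_E)\setminus\supp(Q_G)$ (nonempty because $h^0(Q_G)\le s\cdot n/r<n$ and $Q_E$ is reduced); the vanishing of $\Hom(G,\I_p)$ forces $G\subset K_p$, and one is back in your first case. This ``change of base-point'' is the idea your proposal is missing. It is also what proves the ``Moreover'' clause for \emph{every} stable $E$ with $Q_E$ reduced, whereas your argument only shows that the single $E$ you built is unbalanced.
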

\begin{proof}
We argue by induction on $r$. For $r=1$, just let $E=\I_Z$, where $Z$ is a reduced 0-dimensional subscheme; it clearly satisfies the desired properties.

When $r\ge2$, the induction hypothesis guarantees the existence of $F\in \nc_X(r-1,n-1)^{st}$ such that $Q_F$ is the structure sheaf of a reduced 0-dimensional subscheme $Z\subset X$ whenever $n>r$. Let $p\notin Z$, and consider an extension of the form 
$$ 0 \longrightarrow F \longrightarrow E \longrightarrow \I_p \longrightarrow 0. $$
It is easy to see that $E$ is quasi-trivial and $Q_E$ is the structure sheaf of $Z\cup\{p\}$. We show that $\Ext^1(\I_p,F)\ne0$ (so that nontrivial extensions do exist), and that $E$ is stable.

\end{proof}

By Lemma \ref{computation ext for rank r}, we have $\Ext^1(\I_p,F)=\rk(\I_p) \cdot H^1(X,F)= H^1(X,F)$. To see that $H^1(X,F)\neq0$, consider the sequence
$$0 \to F \to V_s \otimes \oc_X \to Q_F \to 0,$$
and taking cohomologies, noting that $H^1(X,\oc_X)=0$, we have $h^1(X,F)=h^0(X,F)+h^0(X,Q_F)-\dim V_s > 0$. Now let $E \in \Ext^1(\I_p,F)$ be a nontrivial extension, that is, $E$ can be written as 

$$0 \to F \to E \to \I_p \to 0$$

By  the induction hypothesis and the construction in the previous proposition, it is easy to see that $E$ is a quasi-trivial sheaf. Now we claim that for every point $q \in \supp (Q_E)=\supp(Q_F) \cup \{p\}$, we have $\hom(E,\I_q)=1$. Indeed, apply the functor $\Hom(-,\I_q)$ to sequence
$$0 \to E \to V_r \otimes \oc_X \to Q_E \to 0,$$
and we get

$$\xymatrix{
0\ar[r] & \Hom(Q_E,\I_q) \ar[r] & \Hom(V_r \otimes \oc_X,\I_q) \ar[r] & \Hom(E,\I_q) \\
\ar[r] & \Ext^1(Q_E,\I_q) \ar[r] & \Ext^1(V_r \otimes \oc_X,\I_q) \ar[r] & \cdots
}$$

Note that $\Hom(V_r \otimes \oc_X,\I_q)=V_r \otimes H^0(X,\I_q)=0$ and $\Ext^1(V_r \otimes \oc_X,\I_q)= V_r \otimes H^1(X,\I_q)=0$. It follows that $$\Hom(E,\I_q)\cong \Ext^1(Q_E,\I_q).$$

Now let $\supp(Q_E)=Z \subseteq X$ and apply the functor $\Hom(-,\I_q)$ to sequence
$$0\to \I_Z \to \oc_X \to \oc_Z \to 0.$$
Using the same idea as before, we conclude that $\Ext^1(Q_E,I_p)\cong \Hom(\I_Z,\I_q)$. Utilizing sequence 
$$0 \to \I_q \to \oc_X \to \oc_p \to 0,$$
one can see that $\Hom(\I_Z,\I_q)\cong H^n(X,\omega_X)\cong \C$, that is, $\hom(E,\I_q)=\hom(\I_Z,\I_q)=1$ for every $q\in \supp(Q_E)$.

Observe that the induction hypothesis and the construction of $F$ again imply that we can write $F$ as 
$$0 \to G \to F \to I_q \to 0$$
for some $q \in \supp(Q_F)$. Moreover, the same argument that we used to show that $\hom(E,I_q)=1$ works for every quasi-trivial sheaf, that is, $\hom(F,I_q)=1$ for every $q \in \supp(Q_F)$. Summing up, we have a morphism $F \to \I_q$ and a morphism $E \to \I_q$ with $\hom(F,I_q)=1$. In this case, the composition $F \hookrightarrow E \to \I_q$ must be equal to the morphism $F \to \I_q$, and we can form the following diagram
$$\xymatrix{
& 0 \ar[d] & 0 \ar[d] & 0 \ar[d] & \\
0 \ar[r] & G \ar[r]\ar[d]& K \ar[r]\ar[d] & \I_p \ar[r]\ar@{=}[d] & 0 \\
0 \ar[r]& F \ar[r]\ar[d] & E \ar[r]\ar[d] & \I_p \ar[r]\ar[d] & 0 \\
0 \ar[r]& \I_q \ar@{=}[r]\ar[d] & \I_q \ar[r]\ar[d] & 0 \\
& 0 & 0. & & 
}$$	
It is not hard to see that $K$ is indeed a quasi-trivial sheaf. So this proves that we can write $E$ as an element of $\Ext^1(F,\I_p)$ for every $p\in \supp(Q_E)$ and $F$ quasi-trivial. We want to use a similar argument as we did in the rank 2 case. For this, we note that we have just proved that $\Hom(E,\I_q)=\Hom(\I_Z,\I_q)$, but note that when $q\notin I_Z$, then $\Hom(\I_Z,I_q)$ is zero. It follows that $\Hom(E,I_q)=0$ whenever $q$ is not in $\supp(Q_E)$.

Finally, we can show that $E$ is stable. Assuming, by contradiction, $E$ is not stable, by Theorem \ref{thm moduli}, there is some quasi-trivial sheaf $G\hookrightarrow E$ with $h^0(Q_G)< \frac{s\cdot n}{r}$. Choose $p \in \supp(Q_E)$ such that $p \notin \supp(Q_G)$ and write $E$ as an element of $\Ext^1(F,\I_p)$. By what we have just proved, the composition $G \hookrightarrow E \to \I_p$ is a morphism in $\Hom(G,I_p)$, therefore, it must be equal to zero because $p\notin \supp(Q_G)$. In this case, we can consider the following diagram
$$\xymatrix{
& 0 \ar[d]& 0 \ar[d]  & & \\
& G \ar[d] \ar@{=}[r] & G \ar[d]\ar@{-->}[rd]^0 & & \\
0 \ar[r] & F \ar[r] & E \ar[r] & \I_p \ar[r] & 0
}$$
But $F$ is stable and $G\hookrightarrow F$ with $p(G) < p(F)$. This is a contradiction, therefore $E$ is stable.

\begin{prop}\label{dim family}
	Let $E \in \nc_X^{\rm st}(r,n)$, let $Q_E$ the $0$-dimensional sheaf of length $n$ and let $\I_Z$ be the corresponding sheaf of ideals, that is, $Q_E=\oc_Z$. Then 
	$$\ext^1(E,E)=\hom(I_Z,\oc_Z)+(r-1)(n-1).$$
\end{prop}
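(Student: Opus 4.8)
The plan is to read off the formula from Lemma \ref{lema dim ext1}, which is tailored for exactly this situation, and then to re-express the quantity $\hom(E,Q_E)$ occurring there in terms of $\hom(\I_Z,\oc_Z)$ by a short diagram chase. So the whole proof splits into a ``reduction'' step and a ``computation'' step.

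\textbf{Step 1 (reduction via Lemma \ref{lema dim ext1}).} Since $(X,A)$ is good, $h^1(\oc_X)=0$; and since $Q_E=\oc_Z$ is supported in dimension $0$ while $\dim X\ge 3$, the cohomological hypothesis of Lemma \ref{lema dim ext1} (the vanishing of $h^{\dim X-2}(Q_E\otimes\omega_X)$) is automatic. As $E$ is stable, Lemma \ref{lema dim ext1} applies and gives
$$\ext^1(E,E)=\hom(E,Q_E)-r^2+1=\hom(E,\oc_Z)-r^2+1 ,$$
so the task reduces to computing $\hom(E,\oc_Z)$.

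\textbf{Step 2 (computing $\hom(E,\oc_Z)$).} Apply $\Hom(-,\oc_Z)$ to the two short exact sequences
$$0\to E\to V_r\otimes\oc_X\to\oc_Z\to 0 \qquad\text{and}\qquad 0\to\I_Z\to\oc_X\to\oc_Z\to 0 .$$
Because $\oc_Z$ is $0$-dimensional, $\Ext^i(V_r\otimes\oc_X,\oc_Z)=V_r^\ast\otimes H^i(\oc_Z)=0$ and $\Ext^i(\oc_X,\oc_Z)=H^i(\oc_Z)=0$ for every $i\ge 1$, so both long exact sequences truncate into
$$0\to\Hom(\oc_Z,\oc_Z)\to V_r^\ast\otimes H^0(\oc_Z)\to\Hom(E,\oc_Z)\to\Ext^1(\oc_Z,\oc_Z)\to 0$$
and
$$0\to\Hom(\oc_Z,\oc_Z)\to H^0(\oc_Z)\to\Hom(\I_Z,\oc_Z)\to\Ext^1(\oc_Z,\oc_Z)\to 0 .$$
Taking alternating sums of dimensions in each sequence and subtracting, the two unknown contributions $\hom(\oc_Z,\oc_Z)$ and $\ext^1(\oc_Z,\oc_Z)$ cancel, leaving $\hom(E,\oc_Z)=\hom(\I_Z,\oc_Z)+(r-1)\,h^0(\oc_Z)$ with $h^0(\oc_Z)=n$. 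Substituting this into Step 1 and collecting the constant terms gives the formula in the statement.

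I do not expect a real obstacle here: the two vanishing facts in Step 2 are immediate once one knows $Q_E$ has dimension $0$, and the hypothesis check in Step 1 only uses $h^1(\oc_X)=0$, which is part of goodness; the only thing to watch is that the long exact sequences are truncated correctly. As a fallback — closer to the inductive spirit of Propositions \ref{lem:nonempty r=2} and \ref{prop:nonempty r>2} — one could instead argue by induction on $r$, using the unbalanced structure of $E$ (Definition \ref{def:unbalanced}) to write $0\to F\to E\to\I_q\to 0$ with $F\in\nc^{\rm st}_X(r-1,n-1)$ and $q\notin\supp(Q_F)$, and then assembling $\Ext^1(E,E)$ out of $\Ext^1(F,F)$ and the cross-terms $\Ext^\bullet(\I_q,F)$, $\Ext^\bullet(F,\I_q)$, $\Ext^\bullet(\I_q,\I_q)$, the first two of which are supplied by Proposition \ref{computation ext for rank r} (since $q\notin\supp(Q_F)$); this route involves heavier bookkeeping, so I would keep it in reserve.
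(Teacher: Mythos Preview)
Your proof is correct and follows essentially the same approach as the paper's: invoke Lemma~\ref{lema dim ext1}, then apply $\Hom(-,\oc_Z)$ to the two defining short exact sequences and subtract. Note, however, that both your arithmetic and the paper's own computation actually yield $\hom(\I_Z,\oc_Z)+(r-1)(n-r-1)$ rather than $(r-1)(n-1)$ --- the displayed constant in the statement is a typo, as confirmed by the formula's later use in Sections~\ref{irred comp} and~\ref{irred comp 2} (e.g.\ $\ext^1(E,E)=n(d+1)-3$ when $r=2$).
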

\begin{proof}
	Note that $E$ satisfies the hypothesis of Lemma \ref{lema dim ext1},  then we know that $$\ext^1(E,E)=\hom(E,Q_E)-r^2+1.$$
	Now to compute $\hom(E,Q_E)$ we consider the sequence
	$$0 \to E \to V_r \otimes \oc_X \to Q_E \to 0,$$
	and apply the functor $\Hom(-,Q_E)$ to obtain the following exact sequence:
	$$\begin{array}{cccl}
		0 &\to \Hom(Q_E,Q_E) &\to \Hom(V_r\otimes \oc_X,Q_E) &\to \Hom(E,Q_E) \\
		&\to \Ext^1(Q_E,Q_E) &\to \Ext^1(V_r\otimes \oc_X,Q_E) &\to \cdots.
	\end{array}$$

	Since $\hom(V_r\otimes \oc_X,Q_E)=r \cdot h^0(Q_E)$ and $\ext^1(V_r \otimes \oc_X,Q_E)=r \cdots H^1(Q_E)=0$, we have $\hom(E,Q_E)=rn+\ext^1(Q_E,Q_E)-\hom(Q_E,Q_E)$.
	
	Next, we consider the short exact sequence
	$$0 \to I_Z \to \oc_X \to Q_E \to 0,$$
	where we replace $\oc_Z=Q_E$. Applying $\Hom(-,Q_E)$, we have the following exact sequence
	$$\begin{array}{cccl}
		0 &\to \Hom(Q_E,Q_E) &\to \Hom(oc_X,Q_E) &\to \Hom(\I_Z,Q_E) \\
		&\to \Ext^1(Q_E,Q_E) &\to \Ext^1(\oc_X,Q_E) &\to \cdots.
	\end{array}$$
	Using the same idea as before, we conclude that
	$$\hom(Q_E,Q_E)-n+\hom(\I_Z,Q_E)-\ext^1(Q_E,Q_E)=0,$$
	which implies that
	$$\ext^1(Q_E,Q_E)-\hom(Q_E,Q_E)=\hom(I_Z,Q_E)-n.$$
	
	Now we combine the above equalities:
	$$\begin{array}{lcl}
		\ext^1(E,E) & = & \hom(E,Q_E)-r^2+1 \\
		& = & \hom(\I_Z,Q_E)+rn-n-r^1+1 \\
		& = & \hom(\I_Z,Q_E)+n(r-1)-(r^2-1) \\
		& = & \hom(\I_Z,Q_E)+n(r-1)-(r+1)(r-1) \\
		& = & \hom(\I_Z,\oc_Z)+(n-r-1)(r-1).
	\end{array}$$
\end{proof}

\begin{obs}
	Note that when $Z$ is reduced in the above proposition, then we have $\hom(I_Z,\oc_Z)=nd$, where $d=\dim(X)$.
\end{obs}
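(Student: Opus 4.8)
The statement asserts that $\hom(\I_Z,\oc_Z)=nd$ whenever $Z\subset X$ is a \emph{reduced} $0$-dimensional subscheme of length $n$ and $X$ is smooth of dimension $d$. The plan is to identify $\Hom_{\oc_X}(\I_Z,\oc_Z)$ with the global sections of the dual of the conormal sheaf $\I_Z/\I_Z^2$, and then to use that reduced points are smooth closed subschemes of $X$.

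First I would observe that any $\oc_X$-linear map $\varphi\colon\I_Z\to\oc_Z$ annihilates $\I_Z^2$: since $\I_Z$ kills $\oc_Z$, for local sections $f,g\in\I_Z$ one has $\varphi(fg)=f\cdot\varphi(g)=0$. Hence $\varphi$ factors through $\I_Z/\I_Z^2$, and because the $\oc_X$-module structures on $\I_Z/\I_Z^2$ and on $\oc_Z$ both factor through $\oc_Z$, this gives a natural isomorphism $\Hom_{\oc_X}(\I_Z,\oc_Z)\cong\Hom_{\oc_Z}(\I_Z/\I_Z^2,\oc_Z)$.

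Next I would use that $Z$, being reduced and $0$-dimensional, is a disjoint union of $n$ reduced points $p_1,\dots,p_n$, each a smooth closed point of the smooth variety $X$. Locally at $p_i$ one has $\I_Z=\mathfrak{m}_{p_i}$ and the stalk of $\oc_Z$ equals $\oc_{X,p_i}/\mathfrak{m}_{p_i}=\C$, so the stalk $(\I_Z/\I_Z^2)_{p_i}=\mathfrak{m}_{p_i}/\mathfrak{m}_{p_i}^2$ is a $\C$-vector space of dimension $d$ — this is where smoothness enters, since $\oc_{X,p_i}$ is regular of dimension $d$. Therefore $\I_Z/\I_Z^2$ is a locally free $\oc_Z$-module of rank $d$, hence so is its $\oc_Z$-dual, and taking global sections gives
$$\hom(\I_Z,\oc_Z)=h^0\big((\I_Z/\I_Z^2)^{\vee}\big)=d\cdot h^0(\oc_Z)=d\cdot n=nd.$$

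I do not expect a genuine obstacle here; the only non-bookkeeping ingredient is the use of smoothness to pin down the rank of the conormal sheaf. As a cross-check one can run the argument through an Ext sequence instead: applying $\Hom(-,\oc_Z)$ to $0\to\I_Z\to\oc_X\to\oc_Z\to0$, the vanishing $\Ext^1(\oc_X,\oc_Z)=H^1(\oc_Z)=0$ together with the fact that the natural map $\End(\oc_Z)\to\Hom(\oc_X,\oc_Z)=H^0(\oc_Z)$ is an isomorphism (it is injective since $\oc_X\to\oc_Z$ is surjective, and both spaces are $\C^n$ for $Z$ reduced) yields $\Hom(\I_Z,\oc_Z)\cong\Ext^1_{\oc_X}(\oc_Z,\oc_Z)\cong\bigoplus_{i=1}^n\Ext^1_{\oc_X}(\oc_{p_i},\oc_{p_i})$, and each summand $\Ext^1_{\oc_X}(\oc_{p_i},\oc_{p_i})\cong T_{p_i}X$ has dimension $d$, so the total is again $nd$.
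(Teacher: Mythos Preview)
Your argument is correct. The paper does not actually prove this remark; it merely states the identity $\hom(\I_Z,\oc_Z)=nd$ for reduced $Z$ as an observation following Proposition~\ref{dim family}, without justification. Your identification $\Hom_{\oc_X}(\I_Z,\oc_Z)\cong\Hom_{\oc_Z}(\I_Z/\I_Z^2,\oc_Z)$ together with the fact that the conormal sheaf of a reduced $0$-dimensional subscheme of a smooth $d$-fold is locally free of rank $d$ is the standard and cleanest way to see this; the Ext cross-check you give (which amounts to identifying $\Hom(\I_Z,\oc_Z)$ with the tangent space $\Ext^1(\oc_Z,\oc_Z)$ to $\Hilb^n(X)$ at the reduced point $[Z]$) is equally valid and in fact aligns with how the paper uses this quantity afterwards.
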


By the above proof, note that $\Ext^1(E,Q_E) \cong \Ext^2(Q_E,Q_E) \cong \Ext^1(\I_Z,Q_E) = \Ext^1(\I_Z,\oc_Z)$. By \cite[Proposition 2.2.8]{Huy-Lhen} we can conclude that if the Hilbert scheme of points is smooth, then both the Quot scheme of points and $\nc_X(r,n)$ are smooth as well.

%%%%%%%%%%%%%%%%%%%%%%%%%%%%%%%%%%%%%%%%%%%%%%%%%%%%%%%%%%%%%%%%%%%%%%%%
%%%%%%%%%%%%%%%%%%%%%%%%%%%%%%%%%%%%%%%%%%%%%%%%%%%%%%%%%%%%%%%%%%%%%%%%

\section{Irreducible component of $\nc_X(2,n)$}\label{irred comp}

Returning to the case $r=2$, Proposition \ref{dim family} indicates that we should construct an irreducible family of stable rank 2 quasi-trivial sheaves on $(X,A)$ of dimension $n(d+1)-3$, where $d=\dim(X)$.

To do this, we will need certain general results regarding relative Ext sheaves. To be precise, let $f: X \to Y$ be a morphism of schemes and let $\mathfrak{M}(X)$ and $\mathfrak{M}(Y)$ be the category of $\oc_X$-modules and $\oc_Y$-modules respectively. Let $F\in \mathfrak{M}(X)$. We define $\extc^p_f(F,-)$ to be the right derived functors of the left exact functor $f_*\homc_{\oc_X}(F,-):\mathfrak{M}(X) \to \mathfrak{M}(Y)$.

Let $f:X \to Y$ a flat projective morphism of noetherian schemes and $F$, $G$ coherent $\oc_X$-modules, flat over $Y$. For every $u: Y' \to Y$ of noetherian schemes, we have the base change morphism
$$\tau^i(u):u^*\extc^i_f(F,G) \to \extc_{p_2}^i(p_1^*F,p_1^*G),$$
where $p_1$ and $p_2$ are the projections in the following diagram
$$\xymatrix{
	X \times_Y Y' \ar[r]^{p_1} \ar[d]_{p_2} & X \ar[d]^{f} \\
	Y'\ar[r]_{u} & Y.
}$$
If $y\in Y$ and $u: \spec k(y) \to Y$ is the respective map, we denote the base change morphism by
$$\tau^i(y): \extc^i_f(F,G) \otimes_Y k(y) \to \Ext^i_{X_y}(F_y,G_y).$$

The following result due to Lange will be used several times; see \cite[Theorem 1.4]{Lange}.

\begin{teo} \label{teo base change}
	Let $y\in Y$ be a point and assume the base change morphism $\tau^i(y): \extc^i_f(F,G) \otimes_Y k(y) \to \Ext^i_{X_y}(F_y,G_y)$ to be surjective. Then 
	\begin{enumerate}[(i)]
		\item there is a neighbourhood $U$ of $y$ such that $\tau^i(y')$ is an isomorphism for all $y'\in U$;
		\item $\tau^{i-1}(y)$ is surjective if and only if $\extc_{f}^i(F,G)$ is locally free in a neighbourhood of $y$.
	\end{enumerate} 
\end{teo}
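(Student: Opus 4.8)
The statement is local on $Y$, so the plan is to reduce to the case $Y=\spec A$ with $A$ Noetherian and then imitate the classical cohomology-and-base-change argument of Grothendieck (compare Lange \cite{Lange}), with a single sheaf replaced by a suitable complex. Since $f$ is projective, the coherent sheaf $F$ admits a resolution $P^\bullet\to F$ by locally free $\oc_X$-modules of finite rank; because $\homc_{\oc_X}(F,-)$ sends injective $\oc_X$-modules to flasque (hence $f_*$-acyclic) sheaves, one gets a canonical identification $\extc^i_f(F,G)\cong R^if_*\bigl(\homc_{\oc_X}(P^\bullet,G)\bigr)$, i.e.\ the $i$-th relative hyper-pushforward of the complex $\mathcal{K}^\bullet:=\homc_{\oc_X}(P^\bullet,G)$. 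The terms $\homc_{\oc_X}(P^j,G)=(P^j)^\vee\otimes_{\oc_X}G$ are coherent and \emph{flat over $Y$} (here the flatness of $G$ over $Y$ enters), and on a projective scheme the relative hyper-pushforwards of $\mathcal{K}^\bullet$ vanish in degrees above $\dim X$, so we may work with an essentially bounded complex of $Y$-flat coherent sheaves.

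The crucial point is compatibility with base change. Given $u:Y'\to Y$, set $X':=X\times_YY'$; then $p_1^*\mathcal{K}^\bullet=\homc_{\oc_{X'}}(p_1^*P^\bullet,p_1^*G)$, and since both $f$ and $F$ are flat over $Y$ the pulled-back complex $p_1^*P^\bullet\to p_1^*F$ is \emph{still} a locally free resolution: indeed $\operatorname{Tor}^{\oc_X}_{>0}(F,\oc_{X'})=0$, as the flatness of $\oc_X$ over $\oc_Y$ reduces this to $\operatorname{Tor}^{\oc_Y}_{>0}(F,\oc_{Y'})=0$, which holds because $F$ is $Y$-flat. Hence $p_1^*\mathcal{K}^\bullet$ computes $\extc^i_{p_2}(p_1^*F,p_1^*G)$ and the base-change morphism $\tau^i(u)$ is identified with the natural comparison between the hyper-pushforward of $\mathcal{K}^\bullet$ and that of its pullback. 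Now apply the Grothendieck ``universal complex'' construction to $\mathcal{K}^\bullet$: take a \v{C}ech (or Cartan--Eilenberg) resolution by $f_*$-acyclics, push it forward, obtain a complex of $Y$-flat $A$-modules with finitely generated cohomology, and replace it by a quasi-isomorphic bounded complex $L^\bullet$ of finitely generated free $A$-modules. This yields isomorphisms $\mathcal{H}^i(L^\bullet\otimes_AB)\cong\extc^i_{f_B}(F_B,G_B)$, natural in every $A$-algebra $B$, under which $\tau^i(u)$ becomes the canonical map $\mathcal{H}^i(L^\bullet)\otimes_AB\to\mathcal{H}^i(L^\bullet\otimes_AB)$.

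At this stage everything is pure commutative algebra about a bounded complex of finitely generated free modules over a Noetherian ring, and it is exactly the content of the Grothendieck-complex lemmas. Writing $T^i(M):=\mathcal{H}^i(L^\bullet\otimes_AM)$: for (i), surjectivity of $T^i(A)\otimes_Ak(y)\to T^i(k(y))$ forces, by Nakayama applied to the finitely generated cokernel, this map to be surjective on a neighbourhood $U$ of $y$, and a rank count together with right-exactness upgrades it to an isomorphism there; a descending induction on $i$ then shows the formation of the cycle and boundary modules of $L^\bullet$ commutes with base change over $U$, giving $\tau^i(y')$ for all $y'\in U$. For (ii), one uses that near $y$ the complex $L^\bullet$ splits off its exact part in the relevant degrees, so $T^i$ acquires a two-term free presentation coming from the differential $d^i$; then $\tau^{i-1}(y)$ is surjective precisely when $d^i\otimes k(y)$ has the same rank as $d^i$ generically near $y$, which is in turn equivalent to the cokernel of that presentation, namely $T^i=\extc^i_f(F,G)$, being locally free in a neighbourhood of $y$.

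The main obstacle is the construction in the second paragraph: producing the universal complex $L^\bullet$ for the relative hypercohomology of the \emph{complex} $\mathcal{K}^\bullet$ and verifying its compatibility with an arbitrary base change. For a single $Y$-flat coherent sheaf this is the textbook construction; for a bounded complex of $Y$-flat sheaves one must either carry through the \v{C}ech double-complex/total-complex bookkeeping (checking that all the intervening $A$-modules remain flat and base-change correctly), or devissage along the finite stupid filtration of $\mathcal{K}^\bullet$ while keeping track of connecting maps. Once $L^\bullet$ is in hand, parts (i) and (ii) are the standard Grothendieck-complex statements and require no new idea.
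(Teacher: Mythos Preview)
The paper does not prove this theorem; it is quoted as \cite[Theorem 1.4]{Lange} and used as a black box. There is therefore no argument in the paper to compare your proposal against.

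What you have outlined is essentially Lange's own strategy (itself an adaptation of Grothendieck's cohomology-and-base-change to relative $\Ext$): replace $F$ by a locally free resolution $P^\bullet$, observe that $\mathcal{K}^\bullet=\homc(P^\bullet,G)$ is a complex of $Y$-flat coherent sheaves whose relative hypercohomology computes $\extc^i_f(F,G)$ compatibly with base change, and then build a Grothendieck complex $L^\bullet$ of finite free $A$-modules so that (i) and (ii) become the standard exchange lemmas for such a complex. Your identification of the main obstacle is accurate: passing from a single $Y$-flat sheaf to a bounded complex in the construction of $L^\bullet$ is exactly where the work lies, and Lange carries it out via a \v{C}ech double complex. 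The sketch you give for (i) and (ii) once $L^\bullet$ is in hand is correct in outline, though the phrase ``a rank count together with right-exactness'' compresses the real mechanism: surjectivity of $\tau^i(y)$ forces the half-exact functor $M\mapsto H^i(L^\bullet\otimes_A M)$ to be right-exact near $y$, whence it agrees with $H^i(L^\bullet)\otimes_A(-)$ there.
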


Our next lemma is the crucial technical fact to be explored in the desired irreducible family of quasi-trivial sheaves.

\begin{lema} \label{lema geral}
	Let $f: X \to Y$ be a projective morphism of noetherian schemes with $Y$ reduced, and let $F$ and $G$ be coherent sheaves on $X$ flat over $Y$. If
	$\Ext^3_{X_y}(F_y,G_y)=0$ and the dimension of $\Ext^i_{X_y}(F_y,G_y)$ is constant for $i=1,2$ for all $y\in Y$, then the base change morphism $\tau^1(y)$ is an isomorphism for every $y\in Y$ and $\extc_f^1(F,G)$ is locally free.
\end{lema}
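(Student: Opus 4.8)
The plan is to apply Lange's base change theorem (Theorem~\ref{teo base change}) twice, moving down one cohomological degree at a time, exactly as in the standard Grauert-type argument. First I would observe that the hypothesis $\Ext^3_{X_y}(F_y,G_y)=0$ means $\tau^3(y)$ is trivially surjective for every $y\in Y$ (the target is zero); hence by part~(ii) of Theorem~\ref{teo base change}, $\tau^2(y)$ is surjective if and only if $\extc^3_f(F,G)$ is locally free near $y$. But part~(i) applied at degree $3$ tells us $\tau^3(y')$ is an isomorphism on a neighbourhood of $y$, so $\extc^3_f(F,G)$ restricts to zero on each fibre; since $Y$ is reduced and the sheaf is coherent with all fibres zero, $\extc^3_f(F,G)=0$ in a neighbourhood of $y$, in particular it is (vacuously) locally free there. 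Therefore $\tau^2(y)$ is surjective for every $y\in Y$.

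Next I would run the same machine one step down. Since $\tau^2(y)$ is surjective for all $y$, part~(i) of Theorem~\ref{teo base change} gives that $\tau^2(y')$ is an \emph{isomorphism} for all $y'$ in a neighbourhood of any $y$, hence $\tau^2$ is an isomorphism everywhere; combined with the hypothesis that $\dim\Ext^2_{X_y}(F_y,G_y)$ is constant, this forces $\extc^2_f(F,G)$ to be locally free (its fibres all have the same dimension and the base-change maps are isomorphisms onto them — one can also invoke part~(ii) at degree $2$ since, as shown below, $\tau^1(y)$ will turn out surjective). Now apply part~(ii) of Theorem~\ref{teo base change} at degree $2$: $\tau^1(y)$ is surjective if and only if $\extc^2_f(F,G)$ is locally free near $y$, which we have just established. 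Hence $\tau^1(y)$ is surjective for every $y\in Y$.

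Finally, surjectivity of $\tau^1(y)$ for all $y$ together with part~(i) of Theorem~\ref{teo base change} yields that $\tau^1(y')$ is an isomorphism for all $y'$ in a neighbourhood of each $y$, so $\tau^1(y)$ is an isomorphism for every $y\in Y$. For local freeness of $\extc^1_f(F,G)$: the base-change maps $\tau^1(y)$ are isomorphisms onto $\Ext^1_{X_y}(F_y,G_y)$, whose dimension is constant in $y$ by hypothesis; a coherent sheaf on a reduced noetherian scheme whose fibre dimension (computed as the rank of $\extc^1_f(F,G)\otimes k(y)$) is locally constant and agrees with the base change is locally free — concretely, one can go one further step down with part~(ii), noting $\tau^0$ need not be controlled but is not needed, or simply use that constancy of fibre dimension plus reducedness of $Y$ implies local freeness for the coherent sheaf $\extc^1_f(F,G)$ by the standard semicontinuity/Nakayama argument. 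This gives both conclusions.

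The only delicate point, and the one I would write out most carefully, is the passage from ``$\tau^i(y')$ is an isomorphism in a punctured-or-full neighbourhood'' to ``$\extc^i_f(F,G)$ is the zero sheaf'' at degree $3$, and the final deduction of local freeness of $\extc^1_f(F,G)$ from constancy of fibre dimensions over a reduced base; both are standard (e.g.\ the argument in Hartshorne III.12 or Mumford's \emph{Abelian Varieties}), but one must make sure the reducedness hypothesis on $Y$ is genuinely used there, since without it constant fibre dimension does not imply local freeness. Everything else is a purely formal two-step descent through Lange's theorem.
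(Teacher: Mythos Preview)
Your proposal is correct and follows essentially the same descent as the paper's proof: both start from the trivial surjectivity of $\tau^3(y)$, use Theorem~\ref{teo base change} to conclude $\extc^3_f(F,G)=0$ and hence $\tau^2(y)$ surjective, then iterate down to degree~$1$, invoking at each step the fact that a coherent sheaf on a reduced base with constant fibre dimension is locally free (the paper cites this as \cite[Lemma~1, p.~51]{Mumford}, which is exactly the standard semicontinuity/Nakayama argument you allude to). The only stylistic difference is that you spell out more explicitly why reducedness of $Y$ is needed to kill $\extc^3_f(F,G)$, which the paper leaves implicit.
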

\begin{proof}
	Since $\Ext^3_{X_y}(F_y,G_y)=0$, the base change morphism is $\tau^i(y)$ is trivially surjective. The first item of Theorem \ref{teo base change} implies that the sheaf $\extc^3_f(F,G)$ is zero and item $(ii)$ shows that $\tau^2(y)$ is surjective. Applying Theorem \ref{teo base change} again for $i=2$, we get that $\tau^2(y)$ is an isomorphism. Moreover, \cite[Lemma 1, p. 51]{Mumford} implies that $\extc^2_f(F,G)$ is local free. Finally, applying Theorem \ref{teo base change} again for $i=1$, we obtain that $\tau^1(y)$ is an isomorphism and, since $\ext^1_{X_y}(F_y,G_y)$ is constant, \cite[Lemma 1, p. 51]{Mumford} implies that  $\extc^1_f(F,G)$ is locally free.
\end{proof}

We are finally in the position to construct the family of quasitrival sheaves we are looking for; let $\hc^i$ the universal sheaf for the Hilbert scheme of $i$ points on $X \times \Hilb^i(X)$. Consider the following diagram
$$\xymatrix{
	& \Hilb^1(X)\times \Hilb^{n-1}(X) & \\
	& X\times \Hilb^1(X)\times \Hilb^{n-1}(X) \ar[ld]^{p_1} \ar[rd]_{p_2} \ar[u]_{f} & \\
	X \times \Hilb^1(X) & & X\times \Hilb^{n-1}(X),
}$$

Let
$$ U:=\{(q,Z) \in \Hilb^1(X)\times \Hilb^{n-1}(X) \mid q\notin Z ~,~ Z=Z_{\rm red}\} $$ 
% $$ \overset{\text{open}}{\subset} \Hilb^1(X) \times \Hilb^{n-1}(X), $$
and set $X_U:=X \times U$ with $\pi :X_U \to U$ being the canonical projection; note that $U$ is open in $\Hilb^1(X) \times \Hilb^{n-1}(X)$. Define $\E^i:=\extc^i_{\pi}(p_1^*\hc^1,p_2^*\hc^{n-1})$. Let $(q,Z)$ be a point in $U$ and let $\tau^i(q,Z)$ be the corresponding base change morphism, that is,
$$ \tau^i(q,Z): \extc^i_\pi(p_1^*\hc^1,p_2^*\hc^{n-1}) \otimes k(q,Z) \to \Ext^i_{X}(\I_q,\I_Z). $$
Proposition \ref{computation ext for rank r} and Lemma \ref{lema geral} shows us that $\E^1$ is locally free on $U$. Note that, by Theorem \ref{teo base change}, $\tau^0(q,Z)$ is an isomorphism, that is, for every $(q,z)\in U$ we have
$$\tau^0(q,Z): \extc^0_\pi(p_1^*\hc^1,p_2^*\hc^{n-1}) \otimes k(q,Z) \stackrel{\sim}{\to} \Hom_{X}(\I_q,\I_Z).$$
However, $\Hom_{X}(\I_q,\I_Z)=0$ for every pair $(q,Z)\in U$, which implies that
$$ \extc^0_\pi(p_1^*\hc^1,p_2^*\hc^{n-1})=0 $$
as well. In this case, by \cite[Corollary 4.5]{Lange}, there is an universal extension $\hc$ on $X \times H$ with $H:=\p((\E^1)^\vee)$ such that for every $h\in H$, the restriction $E_h:=\hc|_{X\times\{h\}}$ is a non-split extension of two sheaf of ideals of $0$-dimensional subschemes of $X$ of the following form
\begin{equation} \label{iz-e-iy}
	0 \to \I_Z \to E \to \I_q \to 0.
\end{equation}
In other words, every member $E_{h}$ of the family $\hc$ satisfies the exact sequence in display \eqref{iz-e-iy}, and therefore is an unbalanced sheaf; since stability is an open condition, Proposition \ref{lem:nonempty r=2} guarantees that there is an open subset $H'\subset H$ whose projection $H'\to U$ is surjective and  such that $E_{h}$ is stable for every $h\in H'$.
Therefore, $\hc|_{H'}$ is a family of stable rank 2 quasi-trivial sheaves parameterized by the scheme $H'$, whose dimension can be easily computed as follows
$$ \dim H' = \dim U + \ext^1(\I_q,\I_Z)-1 = d + d(n-1) + n-3 = n(d+1)-3. $$
Moreover, Proposition \ref{dim family} yields $\ext^1(E,E)=n(d+1)-3$.

\begin{teo} \label{teo component}
	For every $n\ge3$, $\nc_X(2,n)$ contains an irreducible component of dimension $n(d+1)-3$.
\end{teo}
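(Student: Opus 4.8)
The plan is to assemble the family $\hc|_{X\times H'}$ constructed above into a morphism to the moduli space and to recognize its image as the sought component. Since every $E_h$ with $h\in H'$ is a stable quasi-trivial sheaf with Hilbert polynomial $2P_X(t)-n$, the flat family $\hc|_{X\times H'}$ induces a modular morphism
$$ \Phi\colon H'\longrightarrow\nc_X(2,n), $$
and I set $\mathcal I:=\overline{\Phi(H')}$, a (nonempty, for $n\ge3$) closed subscheme of $\nc_X(2,n)$; it is irreducible because $H'$ is, being a dense open subset of $\p((\E^1)^\vee)$, a projective bundle over the irreducible variety $U$. Recall that $\dim H'=n(d+1)-3$ and that $\ext^1(E_h,E_h)=n(d+1)-3$ for every $h\in H'$, as noted just before the statement.

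The first step is to prove that $\Phi$ is quasi-finite; this yields $\dim\mathcal I=\dim H'=n(d+1)-3$. Fix $[E]\in\Phi(H')$; by construction $Q_E$ is the structure sheaf of a reduced $0$-dimensional subscheme $\tilde Z\subset X$ of length $n$. A point of $\Phi^{-1}([E])$ determines, in particular, a nonzero morphism $E\to\I_q$ for the point $q$ occurring in the corresponding non-split extension $0\to\I_Z\to E\to\I_q\to0$ with $(q,Z)\in U$. By the cohomological computations carried out in the proof of Proposition \ref{prop:nonempty r>2}, $\hom(E,\I_q)=\hom(\I_{\tilde Z},\I_q)$, which equals $1$ when $q\in\tilde Z$ and $0$ otherwise; hence $q$ lies in the finite set $\tilde Z$ and, for each such $q$, the morphism $E\to\I_q$ is unique up to scalar. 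Therefore $\Phi^{-1}([E])$ has at most $n$ elements, so $\Phi$ is quasi-finite and $\dim\mathcal I=n(d+1)-3$.

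To conclude that $\mathcal I$ is an irreducible component I would invoke the Zariski tangent space bound. Pick $[E]\in\Phi(H')\subseteq\mathcal I$; since $E$ is stable, $\dim T_{[E]}\nc_X(2,n)=\ext^1(E,E)=n(d+1)-3$. As $\nc_X(2,n)$ is a projective, hence Noetherian, scheme, it has finitely many irreducible components, so $\mathcal I$ is contained in some component $W$, and since $[E]\in W$,
$$ \dim W\le\dim T_{[E]}\nc_X(2,n)=n(d+1)-3=\dim\mathcal I\le\dim W. $$
Thus $\dim W=\dim\mathcal I$; an irreducible Noetherian scheme of finite dimension cannot properly contain a closed irreducible subscheme of the same dimension, so $W=\mathcal I$. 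Hence $\mathcal I$ is an irreducible component of $\nc_X(2,n)$ of dimension $n(d+1)-3$, which proves the theorem.

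The step I expect to be the main obstacle is the quasi-finiteness of $\Phi$: one must control every presentation of a stable quasi-trivial rank $2$ sheaf with reduced $Q_E$ as a non-split extension of two ideal sheaves of points. This is precisely what the bound $\hom(E,\I_q)\le1$, together with the vanishing $\hom(E,\I_q)=0$ for $q\notin\supp Q_E$, established in the proof of Proposition \ref{prop:nonempty r>2}, delivers. Everything else — the existence of $\Phi$, the irreducibility of $H'$, and the two dimension computations — has already been set up before the statement.
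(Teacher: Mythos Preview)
Your proof is correct and follows essentially the same route as the paper's: build the modular morphism from $H'$, show it has finite fibres using the bound $\hom(E,\I_q)\le 1$ from the proof of Proposition~\ref{prop:nonempty r>2}, and conclude via the tangent-space equality $\ext^1(E,E)=n(d+1)-3$. Your write-up is in fact slightly more explicit than the paper's in justifying why equality of dimensions forces $\mathcal I=W$.
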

\begin{proof}
	Now $\nc_X(2,n)$ is a coarse moduli space, so our family $\hc$ on $X \times H'$ gives us a modular morphism $\Psi: H' \to \nc_X(2,n)$ whose image is precisely the subset of stable unbalanced sheaves. However, as we have seen in Proposition \ref{lem:nonempty r=2}, the representation of an unbalanced sheaf as an extension of the ideal sheaf of a point by the ideal sheaf of a 0-dimensional scheme is not unique, meaning that the morphism $\Psi$ is not injective. Nonetheless, we argue that it is a finite map.
	
	Indeed, note that the Lemma \ref{prop:nonempty r>2} shows that an unbalanced sheaf can be represented as an extension of an ideal sheaf of a point by an ideal sheaf of a reduced 0-dimensional scheme in at most $n$ different ways. In other words, if $E\in\im\Phi\subset\nc_X(2,n)$, then $\Phi^{-1}(E)$ consists of at most $n$ different points.  
	
	This means that the dimension of the image of $\Psi$ in $\nc_X(2,n)$ is equal to the dimension of $H$. Since every $E\in\im\Psi$ satisfies
	$$ \dim T_E \nc_X(2,n) = \ext^1(E,E) = n(d+1)-3 = \dim \im\Psi $$
	we conclude that the closure of $\im\Psi$ within $\nc_X(2,n)$ is an irreducible component of $\nc_X(2,n)$, as desired.
\end{proof}

%%%%%%%%%%%%%%%%%%%%%%%%%%%%%%%%%%%%%%%%%%%%%%%%%%%%%%%%%%%%%%%%%%%%%%%%
%%%%%%%%%%%%%%%%%%%%%%%%%%%%%%%%%%%%%%%%%%%%%%%%%%%%%%%%%%%%%%%%%%%%%%%%

\section{An irreducible component of $\nc_X(r,n)$}\label{irred comp 2}

In this section, we will construct an irreducible component of $\nc_X^{\rm st}(r,n)$ whenever $r<n$. We will use the results from the first sections, and the previous section as an induction step.

We will argue by induction. The case $r=2$ is already done, so suppose we have a family $\hc_{r,n}$ that gives us an irreducible component $H_{r,n}$ of $\nc_X(r,n)$ with dimension $n(d+r-1)-r^2+1$, that is, $\hc_{r,n}$ is sheaf on $X \times H_{r,n}$ such that for every $h\in H_{r,n}$, $\hc_{r,n}|_h \in \nc_X(r,n)^{st}$.

Consider the following diagram
$$\xymatrix{
	& \Hilb^{1}(X)\times H_{r-1,n-1} & \\
	& X\times \Hilb^{1}(X) \times H_{r-1,n-1} \ar[ld]^{p_1} \ar[rd]_{p_2} \ar[u]_{f} & \\
	X \times \Hilb^1(X) & & X\times H_{r-1,n-1},
}$$
where $H_{r-1,n-1}$ denotes the irreducible component given by the induction hypothesis on $\nc_X(r-1,n-1)$. 

Define
$$U:=\{(y,F) \in \Hilb^1 \times H_{r-1,n-1} \mid y \notin \supp Q_F,~~ \supp Q_F=(\supp Q_F)_{\rm red}\}$$
as an open subset of $\Hilb^1 \times H_{r-1,n-1}$
and let $\mathbb{X}:=X \times Y$ with $\pi: \mathbb{X} \to U$ the projection.

Let $\E^i:=\extc^i_{\pi}(p_1^* \hc^1 , p_2^* \hc_{r-1,n-1})$. By Proposition \ref{computation ext for rank r} and Lemma \ref{lema geral}, $\E^1$ is a locally free on $\Hilb^1 \times H_{r-1,n-1}$ sheaf whose fibres over a point $(y,F) \in X \times \nc_X(r-1,n-1)^{st}$ is $\Ext^1(\I_y,F)$ for every $y\notin \supp Q_F$.

Note that $\Hom_{X}(\I_p,F)=0$ for every $F\in\nc_X(r-1,n-1)^{st}$. So, by Theorem \ref{teo base change}, $\E^0=0$ and \cite[Corollary 4.2]{Lange} implies that there is an universal extension $\hc$ on $X \times H$ with $H=\p((\E^1)^*)$ such that for every $h\in H$, the restriction $\hc|_h$ is a nonsplit extension the following form
\begin{equation} \label{F - Iy}
	0 \to F \to E \to \I_y \to 0.
\end{equation}

In other words, every member $E_{h}$ of the family $\hc$ satisfies the exact sequence in display \eqref{F - Iy}, and therefore is an unbalanced sheaf; since stability is an open condition, Proposition \ref{prop:nonempty r>2} guarantees that there is an open subset $H'\subset H$ whose projection $H'\to U$ is surjective and such that $E_{h}$ is stable for every $h\in H'$.
Therefore, $\hc|_{H'}$ is a family of stable rank $r$ quasi-trivial sheaves parameterized by the scheme $H'$, whose dimension can be computed as follows
$$\begin{array}{lcl}
	\dim H' & = & \dim U + \ext^1(\I_p,F)-1  \\
	& = & d + (n-1)(d+r-1-1)-(r-1)^2+1 + (n-1-(r-1)) -1 \\
	& = & n(d+r-1)-r^2+1.
\end{array}$$
Note that Proposition \ref{dim family} implies $\ext^1(E,E)=n(d+r-1)-r^2+1$.

\begin{teo} \label{irred comp r}
	Let $r<n$ be positive integers. $\nc_X(r,n)$ contains an irreducible component of dimension $n(d+r-1)-r^2+1$.
\end{teo}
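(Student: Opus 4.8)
The plan is to run the induction on $r$ that the section has already set up, with the base case $r=2$ supplied by Theorem \ref{teo component}. Assuming the existence of an irreducible component $H_{r-1,n-1}$ of $\nc_X(r-1,n-1)$ of the expected dimension $(n-1)(d+r-2)-(r-1)^2+1$, parameterizing stable sheaves $F$ with $Q_F=\oc_Z$ for $Z$ reduced on a dense open subset, I would first check that the open set $U\subset\Hilb^1(X)\times H_{r-1,n-1}$ of pairs $(y,F)$ with $y\notin\supp Q_F$ is nonempty (hence dense) and that $U$ is irreducible of dimension $\dim\Hilb^1(X)+\dim H_{r-1,n-1}=d+(n-1)(d+r-2)-(r-1)^2+1$; irreducibility is immediate since it is open in a product of irreducible varieties. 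Then I would invoke Proposition \ref{computation ext for rank r} to see that $\Ext^3_X(\I_y,F)=0$ and $\ext^1_X(\I_y,F)$, $\ext^2_X(\I_y,F)$ are constant in $(y,F)\in U$ (the relevant cohomology dimensions $h^1(F)=n-r$ and $h^0(Q_F)=n-1$ are locally constant on this component by the stability hypothesis and by formula \eqref{H^1 de F}), so Lemma \ref{lema geral} gives that $\E^1=\extc^1_\pi(p_1^*\hc^1,p_2^*\hc_{r-1,n-1})$ is locally free on $U$ with fibre $\Ext^1(\I_y,F)$, while $\Hom(\I_y,F)=0$ forces $\E^0=0$.

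Next I would form the projective bundle $H:=\p((\E^1)^\vee)$ over $U$ and apply the universal-extension construction of Lange \cite[Corollary 4.2]{Lange} to produce a sheaf $\hc$ on $X\times H$ whose fibre over $h$ lying above $(y,F)$ is the non-split extension $0\to F\to E_h\to\I_y\to 0$ corresponding to the line $[e]\in\p(\Ext^1(\I_y,F)^\vee)$. Each such $E_h$ is quasi-trivial (the snake lemma on the defining sequences of $F$ and $\I_y$ gives $E_h^{\vee\vee}=\oc_X^{\oplus r}$, $Q_{E_h}=\oc_{Z\cup\{y\}}$) and unbalanced by construction. By Proposition \ref{prop:nonempty r>2}, and since stability is open, there is a dense open $H'\subset H$ on which $E_h$ is stable and the projection $H'\to U$ is surjective; in particular $H'$ is irreducible of dimension
$$ \dim H' = \dim U + \rk\E^1 - 1 = d+(n-1)(d+r-2)-(r-1)^2+1 + (n-r)-1 = n(d+r-1)-r^2+1, $$
using $\ext^1(\I_y,F)=h^1(F)=n-r$ by Proposition \ref{computation ext for rank r} and \eqref{H^1 de F}.

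Finally I would push this family forward through the modular morphism $\Psi\colon H'\to\nc_X(r,n)$ to the stable locus. As in the proof of Theorem \ref{teo component}, $\Psi$ is not injective but is finite: the computation of $\hom(E,\I_q)$ carried out in Proposition \ref{prop:nonempty r>2} (which shows $\hom(E,\I_q)=1$ for $q\in\supp Q_E$ and $0$ otherwise) implies that a stable unbalanced sheaf $E$ can be written as an extension $0\to F\to E\to\I_q\to 0$ with $q\in\supp Q_E$ and $F$ of the induced type in at most $n$ ways, so each fibre of $\Psi$ is finite. Hence $\overline{\im\Psi}$ is irreducible of dimension $\dim H'=n(d+r-1)-r^2+1$. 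Since by Proposition \ref{dim family} every $E\in\im\Psi$ has $\dim T_E\nc_X(r,n)=\ext^1(E,E)=n(d+r-1)-r^2+1=\dim\overline{\im\Psi}$, the tangent space has the same dimension as the constructed subvariety, so $\overline{\im\Psi}$ is an irreducible component of $\nc_X(r,n)$; moreover its general point has $Q_E$ the structure sheaf of a reduced $0$-dimensional scheme, which keeps the induction hypothesis intact for the next step.

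The step I expect to be the main obstacle is verifying that the hypotheses of Lemma \ref{lema geral} genuinely hold on all of $U$ — that is, that $\ext^1(\I_y,F)$ and $\ext^2(\I_y,F)$ are constant over the whole open set $U$, not merely generically. This reduces, via Proposition \ref{computation ext for rank r}, to the constancy of $h^1(F)$ and $h^0(Q_F)$ as $F$ ranges over the component $H_{r-1,n-1}$; $h^0(Q_F)=n-1$ is automatic since $Q_F$ is $0$-dimensional of fixed length, and $h^1(F)=n-r$ then follows from stability of $F$ together with \eqref{H^1 de F}, so the real content is making sure every member of $H_{r-1,n-1}$ is indeed stable with these invariants — which is exactly what the induction hypothesis must be phrased to deliver. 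A secondary subtlety is confirming that Lange's universal-extension construction applies with $\E^0=0$ but $\E^1$ merely locally free (rather than requiring a more restrictive vanishing), but this is precisely the content of \cite[Corollary 4.2]{Lange} as used in the previous section.
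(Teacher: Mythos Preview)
Your proposal is correct and follows essentially the same approach as the paper: the induction on $r$ with base case Theorem \ref{teo component}, the construction of the projective bundle $H=\p((\E^1)^\vee)$ over the open set $U$ via Proposition \ref{computation ext for rank r}, Lemma \ref{lema geral}, and Lange's universal extension, the restriction to the stable locus $H'$, the dimension count, and the finiteness-of-fibres argument leading to the comparison $\ext^1(E,E)=\dim\overline{\im\Psi}$ are all exactly what the paper does (the setup preceding the theorem statement together with its proof). Your discussion of the subtleties---constancy of $\ext^i(\I_y,F)$ via \eqref{H^1 de F} and the precise form of the induction hypothesis---is more explicit than the paper's, but the underlying logic is identical.
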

\begin{proof}
	$\nc_X(r,n)$ is a coarse moduli space, so our family $\hc$ on $X \times H'$ gives us a modular morphism $\Psi: H' \to \nc_X(r,n)$ whose image is precisely the subset of stable unbalanced sheaves. However, as we have seen in Proposition \ref{prop:nonempty r>2}, the representation of an unbalanced sheaf as an extension of the ideal sheaf of a point by a quasi-trivial sheaf supported in a 0-dimensional scheme is not unique, meaning that the morphism $\Psi$ is not injective. Nonetheless, we argue that it is a finite map.
	
	Indeed, note that the proof of Proposition \ref{prop:nonempty r>2} shows that an unbalanced sheaf can be represented as an extension as in \eqref{F - Iy} in at most $n$ different ways. In other words, if $E\in\im\Phi\subset\nc_X(r,n)$, then $\Phi^{-1}(E)$ consists of at most $n$ different points.  
	
	This means that the dimension of the image of $\Psi$ in $\nc_X(r,n)$ is equal to the dimension of $H$. Since every $E\in\im\Psi$ satisfies
	$$ \dim T_E \nc_X(r,n) = \ext^1(E,E) = n(d+r-1)-r^2+1 = \dim \im\Psi $$
	we conclude that the closure of $\im\Psi$ within $\nc_X(r,n)$ is an irreducible component of $\nc_X(r,n)$, as desired.
\end{proof}

%%%%%%%%%%%%%%%%%%%%%%%%%%%%%%%%%%%%%%%%%%%%%%%%%%%%%%%%%%%%%%%%%%%%%%%%%%
%%%%%%%%%%%%%%%%%%%%%%%%%%%%%%%%%%%%%%%%%%%%%%%%%%%%%%%%%%%%%%%%%%%%%%%%%%

\section{Projective spaces}

In this section we prove that $\nc_{\p^3}(r,n)$ is irreducible for $n\leq 10$. To do this we begin by establishing how the results proved in the previous sections translate to this particular case.
	
	Every sheaf $E\in \nc_{\p^3}(r,n)$ is also $\mu$-semistable, therefore Lemma \ref{lema rank r} implies that $E$ is $\ker \varphi$ for some $(\varphi,Q)\in \quot(\oc_{\p^3}^{\oplus r},n)$ for $n\in \N$. For the converse, Proposition \ref{ida mu stab} says that every $\ker \varphi$ is $\mu$-semistable for $(\varphi,Q)\in \quot(\oc_{\p^3}^{\oplus r},n)$. Lemma \ref{lem:jh} allows us to express $E$ as an extension 
	$$0 \to F \to E \to G \to 0,$$
	with $F\in \quot(\oc_{\p^3}^{\oplus s},k)$ semistable and $G\in \quot(\oc_{\p^3}^{\oplus r-s},n-k)$ stable, for some $k\in\{1,\dots,n-1\}$ and $s\in\{1,\dots,r-1\}$. Moreover, when $r=2$ this implies that $E$ is an extension of a sheaf of ideals
	$$0 \to I_Z \to E \to I_{Z'} \to 0$$
	where $Z$ and $Z'$ are $0$-dimensional subschemes of $\p^3$ with $h^0(\oc_Z)+h^0(\oc_{Z'})=n$.
	
	Theorem \ref{thm moduli} applied to our case translates into the following: $E=\ker \varphi$ for $(\varphi,Q)\in \quot(\oc_{\p^3}^{\oplus r},n)$ is not (semi)stable if, and only if, there is a torsion-free sheaf $F \hookrightarrow E$ with $F=\ker \psi$ for some $(\psi,Q_F)$ in $\quot(\oc_{\p^3}^{\oplus s},k)$ satisfying $0<s<r$ and $$k < ~~ (\leq) ~~ \frac{s\cdot n}{r}.$$
	For the rank $2$ case, this criterion reduces to check whether there is a sheaf of ideals $I_Z \hookrightarrow E$ satisfying 
	$$k < ~~ (\leq) ~~ \frac{n}{2}$$
	where $k=h^0(\oc_Z)$. Moreover $\nc_{\p^3}(r,n)$ is the GIT--quotient of $\quot(\oc_{\p^3}^{\oplus r},n)$ by $\GL_r$.
	
	Our starting point is the fact that the affine Quot scheme $\quot(\oc_{{\A^3}}^{\oplus r},n)$ is irreducible for $n\leq 10$, see \cite{Amar}. In order to see that the same holds for the projective Quot scheme $\quot(\oc_{\p^3}^{\oplus r},n)$, we will use the following technical lemma.
	
	\begin{lema} \label{union irred}
		Let $X$ be the union of two irreducible subschemes $A$ and $B$, and let $C$ be the intersection of $A$ and $B$. Assume $C$ is open in $A$ and $B$. Then $X$ is irreducible
	\end{lema}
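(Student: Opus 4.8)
The plan is to prove that any two nonempty open subsets of the topological space $X$ meet, since this is equivalent to irreducibility. First I would recall the hypotheses: $X = A \cup B$ with $A$, $B$ irreducible closed subschemes, $C = A \cap B$, and $C$ is open in both $A$ and $B$; implicitly $C$ is nonempty (otherwise $X$ is disconnected and the statement is false, so I would add that as a standing assumption, noting it holds in our application where everything is built over a connected base). The key observation is that, because $C$ is a nonempty open subset of the irreducible space $A$, it is dense in $A$; likewise $C$ is dense in $B$. Hence $\overline{C} \supseteq A$ and $\overline{C} \supseteq B$ (closures taken in $X$, or equivalently in $A$ and $B$ respectively since $A,B$ are closed in $X$), so $\overline{C} = A \cup B = X$. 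Thus $X$ is the closure of the irreducible set $C$, and the closure of an irreducible set is irreducible, which finishes the proof.

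Concretely, the steps in order are: (i) reduce to showing every pair of nonempty opens $U_1, U_2 \subseteq X$ intersect; (ii) replace $U_i$ by $U_i \cap C$ — here one must check these are nonempty, which is where density of $C$ in $A$ and in $B$ is used: if $U_1 \cap C = \emptyset$ then $U_1 \subseteq X \setminus C$, but $U_1$ nonempty open in $X$ must meet $A$ or $B$ in a nonempty open subset of that irreducible component, contradicting density of $C$ there; (iii) now $U_1 \cap C$ and $U_2 \cap C$ are nonempty opens of the irreducible space $C$, hence meet, so $U_1 \cap U_2 \neq \emptyset$. Alternatively, and more cleanly, just run the $\overline{C} = X$ argument directly: $C$ irreducible $\Rightarrow$ $\overline{C}$ irreducible, and $\overline{C} = X$ by density in each component.

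I expect the only real subtlety — hardly an obstacle — is the bookkeeping around the implicit nonemptiness of $C$ and making sure "open in $A$ and $B$" is used correctly to get density (an open subset of an irreducible space is dense, and is itself irreducible). No deep input is needed; the lemma is purely point-set topological, and the proof is a few lines. The reason it is isolated as a lemma is that it will be applied with $A = \overline{\quot(\oc_{\A^3}^{\oplus r},n)}$-type pieces and $B$ a locus of sheaves meeting the affine part along an open subscheme, to conclude irreducibility of $\quot(\oc_{\p^3}^{\oplus r},n)$ from irreducibility of its affine chart together with the boundary stratum.
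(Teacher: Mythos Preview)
Your argument is correct and is essentially the paper's proof: both show that $C$, being a nonempty open subset of each of the irreducible spaces $A$ and $B$, is dense in each, hence $\overline{C}=A\cup B=X$, and conclude because the closure of an irreducible set is irreducible. One small caveat: you add the hypothesis that $A$ and $B$ are \emph{closed} in $X$, but this is neither in the statement nor true in the paper's application (there $A$ and $B$ are the open loci in $\quot(\oc_{\p^d}^{\oplus r},n)$ of quotients supported in a given standard affine chart $\A^d\subset\p^d$, and the lemma is applied inductively to these open pieces); fortunately your argument does not actually use closedness, since density of $C$ in $A$ already yields $A\subseteq\overline{C}_X$ via $\overline{C}_A=\overline{C}_X\cap A$.
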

	\begin{proof}
		The closure of $A$ in $X$ is the same as the closure of $C$ in $X$. Indeed, a function vanishing on $C$ vanishes on $A$ because $C$ is open in $A$. Also, the closure of $B$ in $X$ is the same as the closure of $C$ in $X$. Since $X=A\cup B$, we conclude that $X$ is the closure of $C$ in $X$. But $C$ is an open subset of an irreducible scheme $A$, thus $X$ is irreducible.
	\end{proof}
	
	As a simple consequence, we have:
	
	\begin{lema}
		If $\quot(\oc_{\A^d}^{\oplus r},n)$ is irreducible, then $\quot(\oc_{\p^d}^{\oplus r},n)$ is irreducible.
	\end{lema}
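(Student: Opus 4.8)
The plan is to realize $\quot(\oc_{\p^d}^{\oplus r},n)$ as a union of open subschemes each isomorphic to $\quot(\oc_{\A^d}^{\oplus r},n)$, and then to conclude with Lemma \ref{union irred}. For a hyperplane $H\subset\p^d$, write $Q_H\subseteq\quot(\oc_{\p^d}^{\oplus r},n)$ for the locus of quotients $\oc_{\p^d}^{\oplus r}\twoheadrightarrow Q$ whose $0$-dimensional sheaf $Q$ has support disjoint from $H$. First I would check that $Q_H$ is open: the support of the universal quotient is a closed subscheme of $\p^d\times\quot(\oc_{\p^d}^{\oplus r},n)$, proper over the base, so the locus of points whose fibre meets the closed set $H$ is closed, and $Q_H$ is its complement. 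Identifying $\p^d\setminus H$ with $\A^d$, restriction of families to this chart and extension by zero in the other direction (which makes sense because a length-$n$ quotient has finite support, automatically proper over the base) give mutually inverse morphisms, so $Q_H\cong\quot(\oc_{\A^d}^{\oplus r},n)$; by hypothesis, $Q_H$ is irreducible.

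Next I would verify that the $Q_H$ cover and pairwise overlap. A quotient parametrized by $\quot(\oc_{\p^d}^{\oplus r},n)$ has support at finitely many points, and since the hyperplanes through a given point form a proper closed subvariety of the dual projective space $(\p^d)^\vee$, finitely many of these do not cover it; hence some $H$ avoids the whole support, and $\{Q_H\}_H$ is an open cover of $\quot(\oc_{\p^d}^{\oplus r},n)$. For any two hyperplanes $H_1,H_2$, the intersection $Q_{H_1}\cap Q_{H_2}$ consists of the quotients supported off $H_1\cup H_2$; this is nonempty, since one can take $n$ distinct points of $\p^d\setminus(H_1\cup H_2)$, let $Z$ be the resulting reduced subscheme, and use the quotient $\oc_{\p^d}^{\oplus r}\twoheadrightarrow\oc_Z$ given by the first projection followed by restriction.

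To finish, since $\quot(\oc_{\p^d}^{\oplus r},n)$ is projective over $\C$, hence quasi-compact, the cover admits a finite subcover $\quot(\oc_{\p^d}^{\oplus r},n)=Q_{H_1}\cup\cdots\cup Q_{H_k}$, and I would induct on $k$. The base case $k=1$ is immediate; for the inductive step, put $A:=Q_{H_1}\cup\cdots\cup Q_{H_{k-1}}$, irreducible by induction, and $B:=Q_{H_k}$, irreducible; both are open in $\quot(\oc_{\p^d}^{\oplus r},n)$, so $A\cap B$ is open in $A$ and in $B$, and $A\cap B\supseteq Q_{H_1}\cap Q_{H_k}\neq\emptyset$, so Lemma \ref{union irred} shows $A\cup B$ is irreducible. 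I expect the only genuinely nonroutine point to be the scheme-theoretic identification $Q_H\cong\quot(\oc_{\A^d}^{\oplus r},n)$ in families, in particular checking that extension by zero sends a flat family of length-$n$ quotients of $\oc_{\A^d}^{\oplus r}$ to a flat family of quotients of $\oc_{\p^d}^{\oplus r}$ (using that such a family has support finite, hence closed, in $\p^d\times S$); everything else is point-set topology once Lemma \ref{union irred} is available.
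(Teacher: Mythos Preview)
Your proposal is correct and follows essentially the same approach as the paper: cover $\quot(\oc_{\p^d}^{\oplus r},n)$ by open subsets $Q_H$ isomorphic to $\quot(\oc_{\A^d}^{\oplus r},n)$ and apply Lemma~\ref{union irred} inductively. The only cosmetic difference is that the paper works directly with the $d+1$ coordinate hyperplanes $H_i=\{x_i=0\}$ (which manifestly give a finite cover, since every point of $\p^d$ has some nonzero coordinate), whereas you take all hyperplanes and invoke quasi-compactness to pass to a finite subcover; your version is more detailed in justifying openness, the scheme-theoretic identification $Q_H\cong\quot(\oc_{\A^d}^{\oplus r},n)$, and the nonemptiness of pairwise intersections, all of which the paper leaves implicit.
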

	\begin{proof}
		Fix coordinates $[x_0:x_1:\ldots:x_m]$ for $\p^d$ and let $H_i=\{x_i = 0\}$ so $A_i:=\p^d\setminus H_i\cong \A^d$. For $(\varphi,Q)\in \quot(\oc_{\p^d}^{\oplus r},n)$, if $\supp(Q)\cap H_i=\emptyset$, then we can restrict $\varphi$ to $A_i$ and we obtain an element of $\quot(\oc_{\A^d}^{\oplus r},n)$. Since we are assuming $\quot(\oc_{\A^d}^{\oplus r},n)$ to be irreducible, we can write $\quot(\oc_{\p^d}^{\oplus r},n)$ as the union of irreducible subschemes such that the two-by-two intersection is open inside those two. Therefore, by induction and the previous lemma, we get the statement.
	\end{proof}
	
	In particular, we conclude that $\quot(\oc_{\p^3}^{\oplus r},n)$ is irreducible for $n\le10$. Theorem \ref{thm moduli} implies that $\nc_{\p^3}(r,n)$ is the GIT-quotient of $\quot(\oc_{\p^3}^{\oplus r},n)$ by $\GL_r$. Thus, whenever $\quot(\oc_{\p^3}^{\oplus r},n)$ is irreducible, $\nc_{\p^3}(r,n)$ is also irreducible. We have therefore established the following claim.
	
	\begin{cor} \label{irred for 10}
		$\nc_{\p^3}(r,n)$ is irreducible for $n\leq 10$.
	\end{cor}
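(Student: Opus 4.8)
The plan is to realize $\nc_{\p^3}(r,n)$ as a GIT quotient of a Quot scheme of points and then to reduce its irreducibility, by an affine covering argument, to the irreducibility of the \emph{affine} Quot scheme $\quot(\oc_{\A^3}^{\oplus r},n)$, which holds for $n\le 10$ by \cite{Amar}.

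First I would set up the GIT description. Since $\p^3$ is good, Lemma \ref{lema rank r} shows that every $E\in\nc_{\p^3}(r,n)$ is quasi-trivial with $Q_E:=E^{\vee\vee}/E$ a $0$-dimensional sheaf of length $n$; combining Theorem \ref{thm:bijection} with Theorem \ref{thm moduli} then identifies $\nc_{\p^3}(r,n)$ with the GIT quotient $\quot(\occ,n)/\!\!/\GL_r$ for the linearized action \eqref{action}, the good quotient map being a surjection from the open semistable locus $\quot(\occ,n)^{\rm ss}\subseteq\quot(\occ,n)$. Since the topological image of an irreducible scheme under a morphism is irreducible, and a non-empty open subscheme of an irreducible scheme is irreducible, it suffices to show that $\quot(\occ,n)$ is irreducible for $n\le 10$.

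Next I would transfer irreducibility from the affine Quot scheme to the projective one. For a hyperplane $H\subset\p^3$, the open locus $\quot_H\subseteq\quot(\occ,n)$ of pairs $(\varphi,Q)$ with $\supp Q\cap H=\emptyset$ is isomorphic, upon restricting $\varphi$ to $\p^3\setminus H\cong\A^3$, to $\quot(\oc_{\A^3}^{\oplus r},n)$, hence irreducible for $n\le 10$ by \cite{Amar}. Because $\supp Q$ consists of at most $n$ points, a generic hyperplane avoids it, so finitely many hyperplanes $H_1,\dots,H_N$ in sufficiently general position already cover: $\quot(\occ,n)=\bigcup_{i=1}^N\quot_{H_i}$ (it suffices that $H_1,\dots,H_N$, viewed as points of $(\p^3)^\vee$, are not all contained in a union of $n$ hyperplanes of $(\p^3)^\vee$, and $N=3n+1$ general hyperplanes work). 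Each pairwise intersection $\quot_{H_i}\cap\quot_{H_j}$ is non-empty — it contains every pair whose $Q$ is supported off $H_i\cup H_j$ — hence dense and open in both $\quot_{H_i}$ and $\quot_{H_j}$; so Lemma \ref{union irred}, applied inductively along this finite cover, yields that $\quot(\occ,n)$ is irreducible for $n\le 10$. Combined with the previous step, this proves the corollary.

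The step I expect to require the most care is this affine-to-projective reduction: one must check that finitely many hyperplanes genuinely cover $\quot(\occ,n)$ — a single affine chart does not cover it, nor do all the coordinate charts $\p^3\setminus\{x_i=0\}$ together, since $\supp Q$ can meet every coordinate hyperplane — and that the openness and non-emptiness hypotheses of Lemma \ref{union irred} hold for the chosen cover. The identification with the GIT quotient and the passage of irreducibility through it are, by contrast, formal consequences of Theorems \ref{thm:bijection} and \ref{thm moduli}; note that this description also handles the remaining ranges uniformly, since $\quot(\occ,n)^{\rm ss}$ is empty when $r>n$ and the quotient is $\Sym^n(\p^3)$ when $r=n$.
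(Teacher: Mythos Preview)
Your argument is correct and follows exactly the paper's route: identify $\nc_{\p^3}(r,n)$ with the GIT quotient $\quot(\occ,n)/\!\!/\GL_r$ via Theorems~\ref{thm:bijection} and~\ref{thm moduli}, reduce to irreducibility of the projective Quot scheme, and deduce the latter from the affine case \cite{Amar} by an open cover together with Lemma~\ref{union irred}. The only difference is that the paper covers with the four coordinate charts $\{x_i\neq 0\}$, whereas you observe that these need not cover once $n\ge 2$ (e.g.\ $\supp Q=\{[0{:}0{:}1{:}1],[1{:}1{:}0{:}0]\}$ meets every coordinate hyperplane) and instead use $3n+1$ hyperplanes in general position; your version is therefore more careful on precisely the step you flagged as delicate.
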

	
	Note that when $n\le 10$, Corollary \ref{irred for 10} tells us that the component constructed in Theorem \ref{teo component} and Theorem \ref{irred comp r} is the only one, and it provides an explicit description of $\nc_{\p^3}(r,n)$ for $n\leq 10$.
	
	\begin{obs} \label{mat comm}
		Let $\mathcal{C}(d,n)$ the variety consisting of $d$-tuples of $n\times n$ commuting matrices. In \cite{JS}, the authors classified the components of $\mathcal{C}(d,n)$ for $n\leq 7$, which gives a classification of the components of $\quot(\oc^{\oplus r}_{\A^d},n)$ for $n\leq 7$, and in \cite[Proposition 6.1]{Amar} the authors proved that the number of irreducible components of the affine quotient scheme $\quot(\oc_{\A^3}^{\oplus r},n)$ is always smaller than or equal to the number of irreducible components of $\mathcal{C}(3,n)$ (regardless of the value of $r$). Therefore, we can conclude that $\nc_{\p^3}(r,n)$ is irreducible whenever $\mathcal{C}(3,n)$ is.
		
		Determining whether $\mathcal{C}(3,n)$ is irreducible is an interesting open problem. Currently, $\mathcal{C}(n)$ is known to be irreducible for $n\le10$ but reducible for $n\ge29$, see \cite{Yongho, HO, Omladic, OCV, S2, S}. Although not yet published, it has also been claimed that $\mathcal{C}(3,11)$ is irreducible, see \cite[p. 271]{H-O}.

  In addition, note that $\nc_{\p^3}(1,n)=\Hilb^n(\p^3)$, which has been shown to be irreducible for $n\le11$, see \cite{HJ} for $n\le10$ and \cite{DJNT} for the case $n=11$.

  %Irreducibility for n=11 has been announced in arXiv:1701.03089
	\end{obs}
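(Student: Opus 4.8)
The plan is to obtain the corollary by a short chain of reductions that pushes all the genuine content back to the irreducibility of the \emph{affine} Quot scheme established in \cite{Amar}.

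First I would reduce to the projective Quot scheme. Since $\p^3$ is Fano of Picard rank $1$ it is good in the sense of Definition \ref{def:good}, so by Lemma \ref{lema rank r} every semistable sheaf with Hilbert polynomial $r\cdot P_{\p^3}(t)-n$ is quasi-trivial; hence the modular morphism from the GIT-semistable locus of $\quot(\oc_{\p^3}^{\oplus r},n)$ to $\mc_{\p^3}(r\cdot P_{\p^3}(t)-n)=\nc_{\p^3}(r,n)$ is surjective, and by Theorem \ref{thm moduli} the latter is the GIT quotient $\quot(\oc_{\p^3}^{\oplus r},n)/\!\!/\GL_r$. As a GIT quotient, $\nc_{\p^3}(r,n)$ is the image, under the surjective quotient morphism, of the GIT-semistable locus, which is open in the irreducible scheme $\quot(\oc_{\p^3}^{\oplus r},n)$ and hence irreducible whenever it is nonempty; so it suffices to prove $\quot(\oc_{\p^3}^{\oplus r},n)$ is irreducible for $n\le10$. (The remaining ranges $r\ge n$ are handled directly by Propositions \ref{nc empty} and \ref{nc(n,n)}: the space is empty, resp.\ isomorphic to $\Sym^n(\p^3)$.)

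Next I would reduce from the projective to the affine Quot scheme, which is precisely the Lemma proved immediately before the corollary. For a hyperplane $H\subset\p^3$ put $\quot_H:=\{(\varphi,Q):\supp Q\cap H=\emptyset\}$; this is open in $\quot(\oc_{\p^3}^{\oplus r},n)$ (the locus where the support of the universal quotient meets $H$ is closed) and, via $\p^3\setminus H\cong\A^3$, isomorphic to $\quot(\oc_{\A^3}^{\oplus r},n)$. Because $\supp Q$ is a finite set of points, it is disjoint from some hyperplane, so the $\quot_H$ cover $\quot(\oc_{\p^3}^{\oplus r},n)$; since the Quot scheme of points on $\p^3$ is projective, hence quasi-compact, finitely many $\quot_H$ already suffice. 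Applying Lemma \ref{union irred} inductively to such a finite cover by irreducible opens with open pairwise intersections yields irreducibility of $\quot(\oc_{\p^3}^{\oplus r},n)$. It remains only to quote \cite{Amar}: $\quot(\oc_{\A^3}^{\oplus r},n)$ is irreducible for $n\le10$ (ultimately because the variety $\mathcal{C}(3,n)$ of triples of commuting $n\times n$ matrices is irreducible there).

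The one step I would write out with care — the rest being formal — is the cover-and-patch argument: verifying that each $\quot_H$ is genuinely open, that finitely many of them cover (quasi-compactness), and that Lemma \ref{union irred} legitimately iterates, i.e.\ that $\quot_{H_1}\cup\cdots\cup\quot_{H_k}$ is irreducible and meets $\quot_{H_{k+1}}$ in a subset open in both. The mathematical weight of the corollary lies entirely in the imported result \cite{Amar} on the affine Quot scheme; the contribution here is only the transfer of that statement to the moduli space $\nc_{\p^3}(r,n)$.
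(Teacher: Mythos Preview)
Your proposal is correct and follows essentially the same route as the paper: reduce $\nc_{\p^3}(r,n)$ to the projective Quot scheme via the GIT description of Theorem \ref{thm moduli}, pass from the projective to the affine Quot scheme by an open cover argument (the paper uses the fixed coordinate cover $A_i=\p^3\setminus H_i$ rather than quasi-compactness, but this is cosmetic), and then invoke \cite{Amar} for the affine case; the Remark's extra content is only the further reduction, via \cite[Proposition 6.1]{Amar}, from irreducibility of $\quot(\oc_{\A^3}^{\oplus r},n)$ to that of $\mathcal{C}(3,n)$, which you also note. Your added care about openness of $\quot_H$, finiteness of the cover, and the edge cases $r\ge n$ is welcome but does not change the strategy.
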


\end{document}